\newtheorem{theorem}{Theorem}[section]
\newtheorem{lemma}[theorem]{Lemma}
\newtheorem{corollary}[theorem]{Corollary}
\newtheorem{proposition}[theorem]{Proposition}
\newtheorem*{theorem*}{Theorem}
\newtheorem*{acknowledgement*}{Acknowledgement}
\theoremstyle{definition}
\newtheorem{definition}[theorem]{Definition}
\newtheorem*{claim*}{Claim}
\newtheorem{example}[theorem]{Example}
\theoremstyle{remark}
\newtheorem{remark}[theorem]{Remark}
\newcommand{\R}[1]{\mathbb{R}^{#1}}
\newcommand{\gt}[0]{\tilde{g}}
\newcommand{\Rm}[0]{\operatorname{Rm}}
\newcommand{\Rc}[0]{\operatorname{Rc}}
\newcommand{\defn}[0]{\doteqdot}
\newcommand{\ve}[1]{{\mathbf{#1}}}
\newcommand{\pd}[2]{\frac{\partial #1}{\partial #2}}
\newcommand{\pdt}[0]{\frac{\partial}{\partial t}}
\newcommand{\gh}[0]{\hat{g}}
\newcommand{\Hol}[0]{\operatorname{Hol}}
\newcommand{\hol}[0]{\operatorname{\mathfrak{hol}}}
\newcommand{\mf}[1]{\mathfrak{#1}}
\newcommand{\End}[0]{\operatorname{End}}
\newcommand{\SO}[0]{SO}
\newcommand{\Id}[0]{\operatorname{Id}}
\newcommand{\dm}[0]{\operatorname{dim}}
\newcommand{\wh}[1]{\widehat{#1}}
\newcommand{\Ph}[0]{\wh{P}}
\newcommand{\Pb}[0]{\bar{P}}
\newcommand{\Rh}[0]{\wh{R}}
\newcommand{\Rb}[0]{\bar{R}}
\newcommand{\Th}[0]{\wh{T}}
\newcommand{\Tb}[0]{\bar{T}}
\newcommand{\mc}[1]{\mathcal{#1}}
\newcommand{\Kc}[0]{\mc{K}}
\newcommand{\Hc}[0]{\mc{H}}
\newcommand{\Qc}[0]{\mc{Q}}
\newcommand{\Uc}[0]{\mc{U}}
\newcommand{\Sc}[0]{\mc{S}}
\newcommand{\Wc}[0]{\mc{W}}
\newcommand{\Vc}[0]{\mc{V}}
\newcommand{\WM}[0]{\wedge^2T^*M}
\newcommand{\WMp}[0]{\wedge^2T_p^*M}
\newcommand{\im}[1]{\operatorname{image}(#1)}
\newcommand{\tsul}[3]{{#1}^{#2}_{#3}}
\newcommand{\sg}[0]{\sigma}
\newcommand{\FM}[0]{F(M)}
\newcommand{\FMt}[0]{\widetilde{F(M)}}
\newcommand{\OM}[0]{\mathcal{O}(M)}
\newcommand{\OMt}[0]{\widetilde{\mathcal{O}(M)}}
\newcommand{\GL}[0]{GL(n , \mathbb{R})}
\newcommand{\gl}[0]{\mathfrak{gl}(n, \mathbb{R})}
\newcommand{\La}[0]{\Lambda}
\numberwithin{equation}{section}
\begin{document}

\title[Ricci flow and the holonomy group]{Ricci flow and the holonomy group}

\author{Brett L. Kotschwar}
\address{Max Planck Institute for Gravitational Physics, Am M\"{u}hlenberg 1, D-14476 Golm, Germany}
\email{brett.kotschwar@aei.mpg.de}

\thanks{The author was supported in part by NSF grant DMS-034540}

\date{December 2010}

\keywords{}
\begin{abstract}
 We prove that the restricted holonomy group of a complete smooth
solution to the Ricci flow of uniformly bounded curvature cannot
spontaneously contract in finite time; it follows, then, from an
earlier result of Hamilton
that the holonomy group is exactly preserved by the equation. 
In particular, a solution to the Ricci flow may be K\"{a}hler or
locally reducible (as a product) at $t= T$ if and only if the same is true of 
$g(t)$ at times $t\leq T$.
\end{abstract}
\maketitle

\section{Introduction}

We consider solutions to the Ricci flow
\begin{equation}\label{eq:rf}
  \pdt g = -2\Rc(g),
\end{equation} 
an evolution equation for a smooth family of Riemannian metrics $(M^n, g(t))$.  
A well-known consequence of Hamilton's strong maximum principle for systems \cite{Hamilton4D} is the following
characterization of the image of the curvature operator $\Rm:\WM\to\WM$ of a solution to \eqref{eq:rf}
when this operator is positive semidefinite. 
\begin{theorem*}[Hamilton]
Suppose $g(t)$ is a solution to \eqref{eq:rf} on $M\times [0, T]$ satisfying $\Rm(g(t))\geq 0$.
Then there exists $\delta > 0$ such that, for $t\in (0, \delta)$, $\im{\Rm(g(t))} \subset \WM$ is a smooth subbundle
invariant under parallel translation with respect to $g(t)$ and closed under the bracket
\begin{equation}\label{eq:bracketdef}
 [\omega, \eta]_{ij} = g^{kl}(\omega_{ik}\eta_{lj} - \omega_{jk}\eta_{li}).
\end{equation}
Moreover, for any $0 < t_1 < t_2 \leq T$, $\im{\Rm(g(t_1))} \subset \im{\Rm(g(t_2))}$.
\end{theorem*}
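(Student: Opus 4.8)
The plan is to realize $\Rm(g(t))$ as a solution of a reaction--diffusion system on a fixed bundle and then invoke Hamilton's strong maximum principle for systems. First I would apply the Uhlenbeck trick: introduce an abstract rank-$n$ bundle $E\to M$ with a fixed fibre metric $h$, together with a time-dependent bundle isometry $\iota(t)\colon(E,h)\to(TM,g(t))$ satisfying $\iota(0)=\Id$ and $\partial_t\iota=\Rc\circ\iota$ (with $\Rc$ regarded as an endomorphism). Under the induced identifications the curvature operator becomes a symmetric, positive semidefinite endomorphism $\Rm$ of the fixed bundle $\wedge^2E^*\cong\mathfrak{so}(E)$ obeying
\begin{equation*}
  \partial_t\Rm=\Delta\Rm+\Rm^2+\Rm^{\#},
\end{equation*}
where $\Rm^2$ is the operator square and $\Rm^{\#}$ the Lie-algebra square built from the bracket \eqref{eq:bracketdef} via the structure constants of $\mathfrak{so}(n)$. (Here $M$ is taken compact, or complete of uniformly bounded curvature, so that the maximum principles below are available.)

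Next I would record the algebraic properties of the reaction term. Diagonalizing $\Rm=\sum_\alpha\lambda_\alpha\,\varphi^\alpha\otimes\varphi^\alpha$ in an $h$-orthonormal eigenbasis $\{\varphi^\alpha\}$ of $\mathfrak{so}(E)$, one computes
\begin{equation*}
  \langle\Rm^2V,V\rangle=\sum_\alpha\lambda_\alpha^2\langle\varphi^\alpha,V\rangle^2,\qquad
  \langle\Rm^{\#}V,V\rangle=\sum_{\alpha,\beta}\lambda_\alpha\lambda_\beta\,\langle[\varphi^\alpha,\varphi^\beta],V\rangle^2,
\end{equation*}
so $\Rm\ge0$ forces $\Rm^2\ge0$ and $\Rm^{\#}\ge0$. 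In particular the reaction term is nonnegative, so the cone $\{\Rm\ge0\}$ is preserved by the flow (the ordinary maximum principle for systems), $(\Rm^2+\Rm^{\#})(V,V)\ge0$ for all $V$, and $\Rm^2(V,V)=|\Rm V|^2=0$ for $V\in\ker\Rm$. I would also record the equivalence that makes bracket closure tractable: since $\Rm^{\#}\ge0$, its quadratic form vanishes exactly on $\ker\Rm^{\#}$, so $\ker\Rm\subseteq\ker\Rm^{\#}$ amounts to $\Rm^{\#}(V,V)=0$ for every $V\in\ker\Rm$; by the second formula and because $\ker\Rm=\operatorname{span}\{\varphi^\alpha:\lambda_\alpha=0\}$ while $\im\Rm=\operatorname{span}\{\varphi^\alpha:\lambda_\alpha>0\}$, this in turn is equivalent to $[\varphi^\alpha,\varphi^\beta]\in\im\Rm$ for all $\alpha,\beta$ with $\lambda_\alpha,\lambda_\beta>0$, i.e.\ to $\im\Rm$ being closed under the bracket \eqref{eq:bracketdef}.

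With these preliminaries I would run the strong maximum principle and then extract bracket closure. Hamilton's strong maximum principle, whose structural hypothesis---nonnegativity of the reaction term on $\ker\Rm$---has just been verified, produces a $\delta>0$ such that $\operatorname{rank}\Rm$ is constant on $M\times(0,\delta)$; hence $\ker\Rm$, and with it $\im\Rm=(\ker\Rm)^\perp$, is a smooth subbundle there, invariant under parallel translation with respect to $g(t)$. The same circle of ideas---backward propagation of the zeros of $u=\langle\Rm V,V\rangle\ge0$ for spatially parallel $V$, through the scalar inequality $\partial_t u\ge\Delta u+\langle X,\nabla u\rangle-Cu$ it satisfies---yields, for any $0<t_1<t_2\le T$, the inclusion $\ker\Rm(g(t_2))\subseteq\ker\Rm(g(t_1))$, equivalently $\im{\Rm(g(t_1))}\subseteq\im{\Rm(g(t_2))}$. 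For bracket closure, fix $t_0\in(0,\delta)$, $x_0\in M$, and $V_0\in\ker\Rm(x_0,t_0)$, and extend $V_0$ to a time-independent section $V$ that is parallel at $x_0$. Then $\langle\Rm(x,t_0)V(x),V(x)\rangle\ge0$ vanishes at $x_0$, and $\langle\Rm(x_0,t)V_0,V_0\rangle\ge0$ vanishes at the interior point $t_0$, so $\Delta\langle\Rm V,V\rangle(x_0,t_0)\ge0$ and $\partial_t\langle\Rm V,V\rangle(x_0,t_0)=0$; since $\nabla V(x_0)=0$ and $\Rm V_0=0$, the evolution equation reduces at $(x_0,t_0)$ to $0=\partial_t\langle\Rm V,V\rangle=\Delta\langle\Rm V,V\rangle+(\Rm^2+\Rm^{\#})(V_0,V_0)$ with both summands nonnegative, whence $(\Rm^2+\Rm^{\#})(V_0,V_0)=0$ and therefore $\Rm^{\#}(V_0,V_0)=0$. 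As $V_0\in\ker\Rm(x_0,t_0)$ was arbitrary, $\ker\Rm\subseteq\ker\Rm^{\#}$ at $(x_0,t_0)$, so by the equivalence above $\im\Rm$ is bracket-closed there; $(x_0,t_0)\in M\times(0,\delta)$ being arbitrary, $\im\Rm$ is bracket-closed on $M\times(0,\delta)$. Pulling everything back along $\iota(t)$ completes the proof.

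The main obstacle is the rigorous execution of the strong maximum principle on a (possibly noncompact) manifold: establishing the ``no new zeros in the past'' statement, hence the constancy of the rank and the smoothness of the null-space subbundle on $M\times(0,\delta)$, under only the uniform curvature bound, and controlling the lower-order error terms (from $\nabla V\ne0$ and $\Delta V\ne0$ away from the base point) that arise in passing from the tensor evolution equation to a genuine scalar differential inequality for $u$. By comparison, the Uhlenbeck reduction, the algebraic identities for $\Rm^2$ and $\Rm^{\#}$, and the passage from $\ker\Rm\subseteq\ker\Rm^{\#}$ to bracket closure are essentially formal.
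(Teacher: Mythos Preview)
The paper does not actually prove this theorem. It is stated in the introduction as a known result of Hamilton, attributed to \cite{Hamilton4D}, and serves purely as motivation and background for the paper's own results on the backwards-time behavior of holonomy. There is therefore no ``paper's own proof'' to compare your proposal against.

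That said, your outline is the standard argument one finds in expositions of Hamilton's result (e.g., in \cite{ChowLuNi} or the original \cite{Hamilton4D}): pass to the Uhlenbeck gauge so that $\Rm$ satisfies $\partial_t\Rm=\Delta\Rm+\Rm^2+\Rm^{\#}$ on a fixed bundle, verify the reaction term is nonnegative on the kernel, invoke the strong maximum principle for systems to obtain constancy of rank and parallelism of the image on a short time interval, and then extract bracket closure by a first-variation argument at a point where a null eigenvector is extended parallel. Your identification of the main obstacle---the rigorous strong maximum principle in the noncompact, bounded-curvature setting---is accurate; this is genuinely where the analytic content lies, and Hamilton's original argument in \cite{Hamilton4D} was stated for compact $M$. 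The pointwise computation you give for bracket closure is correct once one checks (as you implicitly do) that the terms involving $\Delta V$ drop out because $\Rm V_0=0$ and $\Rm$ is self-adjoint.

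One small caution: in the bracket-closure step you use that $t_0\in(0,\delta)$ is an interior minimum of $t\mapsto\langle\Rm(x_0,t)V_0,V_0\rangle$ to conclude $\partial_t=0$ there. This is fine since the function is nonnegative and vanishes at $t_0$, but you should be explicit that $V_0$ is held fixed (time-independent in the Uhlenbeck gauge) when differentiating, which you are.
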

The theorem is of particular utility in low dimensions, where there are few possibilities for the
subalgebra $\Rm(\wedge^2T_p^*M)\subset \mathfrak{so}(n)$.  In three dimensions, for example, it implies that such a solution
 must have $\Rm(g(t)) > 0$ for $t > 0$ or split locally as a metric product. 
The strict code for membership in the class of of solutions with nonnegative curvature operator
may lead one to wonder
what possibilities there are for a solution $g(t)$
that attains $\Rm(g(t_0)) \geq 0$ everywhere only after some elapsed time $t_0 > 0$.
The condition $\Rm(g(t))\geq 0$ will be preserved for $t > t_0$, and a solution that splits locally 
for $t_0< t < T$ must likewise split at $t= t_0$, but we have no information on the properties of the solution
prior to $t_0$. In particular, we cannot dismiss the possibility 
that such a solution could split spontaneously at $t_0$.  One may wonder, more generally,	
whether it is possible for any solution (on a manifold with compatible topology)
 to acquire a novel local metric splitting within finite time.
Here, one's intuitive picture of the Ricci flow as a ``heat equation'' for Riemannian metrics
seems at odds with such a phenomenon. Surely it
must violate some principle of unique continuation. The basic question this paper seeks to answer is: which one?
  
Our main result is the following theorem. Here $\Hol^0(g(t))$ denotes the reduced holonomy group of $g(t)$.
\begin{theorem}\label{thm:holonomy}
Suppose $g(t)$ is a smooth complete solution to \eqref{eq:rf} on $M\times[0, T]$ of uniformly bounded curvature.
Then $\Hol^0(g(t)) \subset \Hol^0(g(T))$ for all $0 \leq t \leq T$.
\end{theorem}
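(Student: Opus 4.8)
The plan is to convert the statement into a claim about a single time-independent subbundle manufactured from $g(T)$, and then to prove that claim by a backward-uniqueness argument of the type used for uniqueness of the Ricci flow itself. First I would reduce to the holonomy algebra bundle. Passing if necessary to the universal cover (which changes neither completeness, the curvature bound, nor $\Hol^0$), let $\mc{A}\subseteq\WM$ be the subbundle with $\mc{A}_p=\hol_p(g(T))$; by definition it is invariant under $g(T)$-parallel transport, contains $\im{\Rm(g(T))}$, and, being a holonomy algebra, is closed under the bracket \eqref{eq:bracketdef}. By the Ambrose--Singer theorem, $\hol_p(g(t))$ is spanned by the $g(t)$-parallel translates of the fibres of $\im{\Rm(g(t))}$; hence it is enough to show that for every $t\in[0,T]$ the curvature operator of $g(t)$ has image in $\mc{A}$ and the connection $\del^{g(t)}$ preserves $\mc{A}$. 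Those two facts force $\hol(g(t))\subseteq\mc{A}=\hol(g(T))$, and therefore, both groups being connected, $\Hol^0(g(t))\subseteq\Hol^0(g(T))$.

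Next I would build the error system. Fix the $g(T)$-orthogonal splitting $\WM=\mc{A}\oplus\mc{A}^\perp$, with its static projections $\pi,\pi^\perp$, and let $\mc{U}=\{U_a\}$ be the finite family of tensor fields on $M\times[0,T]$ recording the failure of this splitting to be compatible with $g(t)$: the off-block pieces $\pi^\perp\circ\Rm(g(t))\circ\pi$, $\pi\circ\Rm(g(t))\circ\pi^\perp$, $\pi^\perp\circ\Rm(g(t))\circ\pi^\perp$ of the curvature operator; the two ``second fundamental forms'' $\pi^\perp\circ\del^{g(t)}\circ\pi$ and $\pi\circ\del^{g(t)}\circ\pi^\perp$ of $\mc{A}$; and, to the extent needed to close the system, their first $\del^{g(t)}$-derivatives. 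Every $U_a$ vanishes at $t=T$, since there $\mc{A}$ is parallel and $\im{\Rm(g(T))}\subseteq\mc{A}$. Using the standard evolution equations $(\pdt-\Delta_{g(t)})\Rm=Q(\Rm)$ and $\pdt\gam^k_{ij}=-g^{kl}(\del_iR_{jl}+\del_jR_{il}-\del_lR_{ij})$, the uniform curvature bound and Shi's derivative estimates (which bound $|\del^k\Rm|$ on compact subintervals of $(0,T]$, hence also the coefficients generated by $\del^{g(t)}\pi$, $(\del^{g(t)})^2\pi$ and by the discrepancy $g(t)-g(T)$), and the algebraic fact -- implicit in the proof of Hamilton's theorem above -- that the reaction term $Q$ carries curvature operators with image in the subalgebra bundle $\mc{A}$ back into such operators (so that $\pi^\perp Q(\Rm(g(t)))=O(\sum_a|U_a|)$), I expect $\mc{U}$ to satisfy a closed system: inequalities $|(\pdt-\Delta_{g(t)})U_a|\leq C\sum_b(|U_b|+|\del^{g(t)}U_b|)$ for the curvature-type quantities, coupled to transport inequalities $|\pdt U_a|\leq C\sum_b(|U_b|+|\del^{g(t)}U_b|)$ for the connection-type quantities.

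Since each $U_a$ vanishes at $t=T$ and this mixed parabolic--transport system fits the framework of the Carleman-estimate backward uniqueness theorem for the Ricci flow -- with completeness of $M$ and the uniform bound on $|\Rm|$ (hence on the $g(t)$ and their derivatives) furnishing precisely what is needed to construct the weight functions and to justify the integrations by parts -- I would conclude $U_a\equiv0$ on $M\times[0,T]$ (first on $[\varepsilon,T]$, then let $\varepsilon\to0$). Vanishing of the curvature-type $U_a$ gives $\im{\Rm(g(t))}\subseteq\mc{A}$, and vanishing of the connection-type $U_a$ gives that $\del^{g(t)}$ preserves $\mc{A}$; by the reduction of the first paragraph this proves the theorem.

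The hard part will be the construction and closure of the error system: choosing finitely many $U_a$ so that the evolution equations genuinely close. The obstruction is the mismatch between the static data $(\mc{A},\pi,g(T))$ and the evolving connection $\del^{g(t)}$ -- the evolution of the $U_a$ will carry terms built from $\del^{g(t)}\pi$, $(\del^{g(t)})^2\pi$ and $g(t)-g(T)$ that are not manifestly $O(\sum_b|U_b|+|\del^{g(t)}U_b|)$, and one must rewrite them, using the structure equations and the $g(T)$-parallelism of $\mc{A}$, into that form; a secondary point is to check that the resulting system lies within the scope of the available backward uniqueness theorem. I note finally that the slicker route that settles the forward implication -- evolving a product or K\"{a}hler metric and invoking \emph{forward} uniqueness -- has no analogue here, since backward-in-time existence of such a comparison flow is not available, so a genuine unique-continuation argument seems unavoidable.
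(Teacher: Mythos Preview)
Your overall strategy is exactly the paper's: reduce to a statement about a fixed $\nabla_{g(T)}$-parallel subalgebra bundle containing $\im{\Rm(g(T))}$, manufacture tensorial ``error'' quantities recording the failure of this bundle to be $g(t)$-parallel and to contain $\im{\Rm(g(t))}$, show they satisfy a closed PDE--ODE system, and invoke backward uniqueness. You have also correctly identified where the difficulty lies. However, there is a concrete technical gap that the paper resolves with an idea your outline is missing.

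The issue is your use of the \emph{static} projections $\pi,\pi^{\perp}$ built from $g(T)$. For $t<T$ these are no longer complementary $g(t)$-orthogonal projections, and --- more seriously --- $\mc{A}$ need not be closed under the bracket $[\cdot,\cdot]_{g(t)}$. But the algebraic fact you invoke, that $\pi^{\perp}Q(\Rm(g(t)))$ is $O(\sum|U_a|)$, requires precisely that closure: the $R^{\#}$ part of $Q$ is built from the \emph{time-$t$} bracket, and there is no reason a $g(T)$-subalgebra should be a $g(t)$-subalgebra. One could try to adjoin the failure-of-subalgebra tensor as a further error quantity (it does vanish at $t=T$ and, a posteriori, for all $t$), but its evolution brings in Ricci terms that do not obviously factor through the system, and you have not indicated how to handle them. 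The paper sidesteps this entirely by evolving the projections by the fiberwise ODE $D_t\Ph=0$ (equivalently, \eqref{eq:projode}); the resulting $\Pb(t),\Ph(t)$ remain complementary $g(t)$-orthogonal projections onto bundles $H(t),K(t)$ with $H(t)$ a $[\cdot,\cdot]_{g(t)}$-subalgebra, so the crucial identity $\mc{T}[\Pb,\Pb,\Ph]=0$ (Lemmas \ref{lem:la}, \ref{lem:la2}) holds for all $t$ and the reaction terms are controllable. Only afterwards, once $\Rm\circ\Ph\equiv 0$ and $\nabla\Ph\equiv 0$ are established, does one argue (Lemma \ref{lem:imagerm}) that $H(t)\equiv\mc{H}$.

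A secondary point: your list of error quantities is too short. The paper's heuristic in Section \ref{sec:pdeode} shows that applying the heat operator to $\Rm\circ\Ph$ produces $\Rm\ast\Delta\Ph$, forcing $B=\nabla\nabla\Ph$ into the ODE component; the evolution of $B$ then produces $\nabla\nabla\Rm\ast\Ph$, forcing $\Th=\nabla\Rm\circ(\Id\times\Ph)$ into the parabolic component. The final system is $\ve{X}=(\Rh,\Th)$, $\ve{Y}=(\nabla\Ph,\nabla\nabla\Ph)$. Your phrase ``to the extent needed'' gestures at this, but the specific closure is not obvious and is the content of Proposition \ref{prop:pdeode}.
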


Theorem \ref{thm:holonomy} is the ``backwards-time'' analog of the observation of Hamilton
(cf. \cite{Hamilton4D}, \cite{HamiltonSingularities}) that the holonomy group of a smooth solution
to the Ricci flow cannot expand within its lifetime. Thus one actually has $\Hol^0(g(t)) = \Hol^0(g(0))$ along the flow.
One consequence is an affirmation of the expectation above 
that locally product metrics are, in a sense, rigid within the class of solutions to Ricci flow.
\begin{corollary}\label{cor:holonomy}
Let $(M, g(t))$ be as in Theorem \ref{thm:holonomy}. Then $(M, g(T))$ is locally reducible (respectively, K\"ahler) if and only
if $(M, g(t))$ is locally reducible (K\"ahler) for $0 \leq	t < T$. 
\end{corollary}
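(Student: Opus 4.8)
The plan is to obtain Corollary~\ref{cor:holonomy} from Theorem~\ref{thm:holonomy} together with Hamilton's forward-time observation, using only the classical dictionary between the restricted holonomy group and parallel tensor fields; no further analysis is required. Fix a basepoint $p\in M$ and, via parallel transport at $p$, regard each $\Hol^0(g(t))$ as a subgroup of $GL(T_pM)$, well defined up to conjugacy --- the conjugation is unavoidable here, since $\Hol^0(g(t))$ preserves $g(t)_p$ and these inner products vary with $t$. Two standard facts enter. First, by de~Rham's local decomposition theorem, $(M,g)$ is locally reducible exactly when $\Hol^0(g)$ fixes a proper nontrivial subspace of $T_pM$. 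Second, by the holonomy principle, $(M,g)$ admits a $g$-orthogonal parallel complex structure --- i.e.\ is (locally) K\"ahler --- exactly when $n=2m$ and $\Hol^0(g)$ is conjugate into the standard $U(m)\subset SO(n)$: the ``only if'' is immediate, since a parallel $J$ fixes $J_p$, and for the converse one parallel-transports the $U(m)$-invariant structure $J_0$ at $p$. The essential point is that each of these conditions --- fixing a proper nontrivial subspace of $T_pM$, or being conjugate into $U(m)$ --- depends only on the conjugacy class of $\Hol^0(g)$ in $GL(T_pM)$ and is inherited by subgroups.

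Granting this, the substantive implication is immediate. If $(M,g(T))$ is locally reducible (respectively, K\"ahler), then $\Hol^0(g(T))$ fixes a proper nontrivial subspace of $T_pM$ (resp.\ is conjugate into $U(m)$); Theorem~\ref{thm:holonomy} gives $\Hol^0(g(t))\subseteq\Hol^0(g(T))$ for all $t\in[0,T]$, understood up to conjugacy, which by the preceding paragraph is all that is needed, so $(M,g(t))$ inherits the same property for every $t\le T$, in particular for $t<T$. For the converse, suppose $(M,g(t))$ has the property for all $t<T$ and fix $t_0\in(0,T)$; Hamilton's observation (cf.\ \cite{Hamilton4D}, \cite{HamiltonSingularities}) that the restricted holonomy group of a solution to \eqref{eq:rf} cannot expand within its lifetime gives $\Hol^0(g(T))\subseteq\Hol^0(g(t_0))$, whence $(M,g(T))$ inherits the property as well. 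More symmetrically, applying Theorem~\ref{thm:holonomy} on each subinterval $[0,t']\subseteq[0,T]$ and combining with Hamilton's result shows that the conjugacy class of $\Hol^0(g(t))$ --- and, as in the abstract, of the full holonomy group $\Hol(g(t))$ --- is independent of $t$, so the metrics $g(t)$ are simultaneously locally reducible, simultaneously K\"ahler, or neither.

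I expect no genuine obstacle at the level of the corollary: the analytic content is entirely absorbed into Theorem~\ref{thm:holonomy}, and what remains is holonomy bookkeeping --- matching the geometric notions in the statement to properties of $\Hol^0(g(t))$, and keeping track of the (harmless) conjugacy ambiguity, which is invisible to the conjugation-invariant, subgroup-monotone conditions in play. The one place that merits a little extra care is promoting the restricted-holonomy conclusion to the full holonomy group when $M$ is not simply connected, so as to pass from locally K\"ahler/locally reducible to the global statements; this follows from the restricted result in the standard way and is what underlies the sharper assertion of the abstract.
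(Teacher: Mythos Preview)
Your proposal is correct and follows exactly the approach the paper intends: the paper does not give a separate proof of the corollary but presents it as an immediate consequence of Theorem~\ref{thm:holonomy} combined with Hamilton's forward non-expansion (Theorem~\ref{thm:holonomynonex}), yielding $\Hol^0(g(t)) = \Hol^0(g(0))$ for all $t$, from which the result follows via the standard holonomy characterizations of local reducibility and the K\"ahler condition. Your treatment of the conjugacy ambiguity is a welcome clarification of a point the paper leaves implicit (and which is ultimately handled there at the infinitesimal level via Theorem~\ref{thm:rangerm}).
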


One can equivalently phrase Theorem 1.1 in terms of the time-invariance
of the dimensions of the spaces of $\nabla_{g(t)}$-parallel tensors.
\begin{theorem}\label{thm:paralleldim}
If $(M, g(t))$ is as in Theorem \ref{thm:holonomy}, and $\eta\in C^{\infty}(T^k_l(M))$
satisfies $\nabla_{g(T)}\eta = 0$, then there exists a smooth family $\eta(t)\in C^{\infty}(T^k_l(M))$
for $t\in [0, T]$ such that $\nabla_{g(t)} \eta(t) = 0$ and $\eta(T) = \eta$. 
\end{theorem}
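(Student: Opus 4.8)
The plan is to reduce Theorem~\ref{thm:paralleldim} to Theorem~\ref{thm:holonomy} via the standard correspondence between reduced holonomy and parallel tensors. Fix a basepoint $p \in M$. Since $\nabla_{g(T)}\eta = 0$, the value $\eta_p$ is a vector in the tensor space $(T^k_l)_p M$ that is fixed by the holonomy representation of $\Hol^0(g(T))$ at $p$, acting diagonally on tensors. By Theorem~\ref{thm:holonomy}, $\Hol^0(g(t)) \subset \Hol^0(g(T))$ for every $t \in [0,T]$, so $\eta_p$ is \emph{a fortiori} fixed by the holonomy representation of $\Hol^0(g(t))$ at $p$ for each such $t$. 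The Ambrose--Singer / fundamental principle of holonomy then guarantees, for each fixed $t$, the existence of a unique $\nabla_{g(t)}$-parallel tensor field $\eta(t)$ on $M$ (or at least on the universal cover, then descending since $\eta_p$ is invariant under the full restricted holonomy) with $\eta(t)_p = \eta_p$; uniqueness forces $\eta(T) = \eta$. The only remaining issue is smoothness of $t \mapsto \eta(t)$.

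First I would make the holonomy comparison quantitative enough to extract a family rather than a point-by-point collection of parallel tensors. Concretely, I would work on the frame bundle or, more elementarily, fix a $g(T)$-parallel orthonormal frame along a spanning tree of paths from $p$ and express $\eta(t)$ by parallel transport with respect to $g(t)$ from $p$: for $q \in M$, set $\eta(t)_q \defeq P^{g(t)}_{\gamma}\bigl(\eta_p\bigr)$ where $\gamma$ is a path from $p$ to $q$ and $P^{g(t)}_\gamma$ denotes $g(t)$-parallel transport along $\gamma$ extended to $T^k_l$. Path-independence of this definition is exactly the statement that $\eta_p$ is invariant under $\Hol^0(g(t))$ together with a monodromy argument handling the fundamental group, which is supplied by the containment in Theorem~\ref{thm:holonomy} (for the $\pi_1$ part one uses that a closed loop contributes an element of $\Hol^0$ up to the fixed discrete part, and $\eta_p$ being fixed by all of $\Hol^0(g(T)) \supset \Hol^0(g(t))$ suffices once one also checks invariance under the image of $\pi_1$, which follows because $\eta$ is globally defined and $\nabla_{g(T)}$-parallel). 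Given path-independence, $\eta(t)$ is automatically $\nabla_{g(t)}$-parallel by construction.

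Smoothness in $t$ is the step I expect to require the most care. Parallel transport $P^{g(t)}_\gamma$ along a fixed smooth path $\gamma$ is the solution operator of the linear ODE $\dot V + \Gamma(g(t))(\dot\gamma) V = 0$, whose coefficients depend smoothly (indeed, since $g(t)$ solves \eqref{eq:rf} and has uniformly bounded curvature, with good estimates) on $t$; hence $P^{g(t)}_\gamma$ depends smoothly on $t$ by smooth dependence of ODE solutions on parameters, and so does $\eta(t)_q$ for each fixed $q$. To upgrade this to joint smoothness in $(t,q)$ one observes that $\eta(t)$ is, for each $t$, the unique parallel field with the prescribed value at $p$, so on a coordinate chart around any $q_0$ one may choose the path $\gamma$ to vary smoothly with the endpoint $q$ (e.g. a fixed path to $q_0$ followed by a radial segment in the chart), making $(t,q) \mapsto \eta(t)_q$ smooth on the chart; the local pieces agree by uniqueness. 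Alternatively, and perhaps more cleanly, one can characterize $\eta(t)$ as the unique solution of the linear parabolic (or simply the elliptic-in-space, ODE-in-the-auxiliary-variable) overdetermined system $\nabla_{g(t)}\eta(t) = 0$ with initial value $\eta(T)$ and invoke standard regularity; but the parallel-transport construction is the most transparent.

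**Remark on the main obstacle.** The conceptual content is entirely in Theorem~\ref{thm:holonomy}; Theorem~\ref{thm:paralleldim} is its translation through the holonomy--parallel-tensor dictionary plus a routine (if slightly fussy) verification that the resulting family is smooth and globally well-defined on $M$ rather than merely on the universal cover. The one genuine subtlety worth stating carefully is the passage from the \emph{restricted} holonomy group to the \emph{full} holonomy group: invariance of $\eta_p$ under $\Hol^0(g(t))$ gives a well-defined parallel field on the universal cover $\widetilde M$, and to descend to $M$ one uses that $\eta$ itself is globally defined on $M$ and $\nabla_{g(T)}$-parallel, so its lift is $\pi_1(M)$-invariant, and then uniqueness of the parallel extension of $\eta_p$ for $g(t)$ forces the same $\pi_1(M)$-invariance for every $t$ — which is precisely why one obtains $\eta(t) \in C^\infty(T^k_l(M))$ and not just on the cover.
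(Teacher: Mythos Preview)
The paper does not give a separate proof of Theorem~1.3; it is asserted as an equivalent reformulation of Theorem~1.1, and the intended bridge is precisely the holonomy--parallel-tensor correspondence you invoke. So your overall strategy is the natural one, and your identification of the full-versus-restricted holonomy issue as the only genuine subtlety is correct. However, your resolution of that issue has a gap. You write that ``uniqueness of the parallel extension of $\eta_p$ for $g(t)$ forces the same $\pi_1(M)$-invariance for every $t$,'' but uniqueness on $\tilde M$ only says that two parallel tensors agreeing at a point agree everywhere; it does not tell you that $\gamma^*\tilde\eta(t)$ and $\tilde\eta(t)$ agree at any point to begin with. Unwinding your construction, what you actually need is $P^{g(t)}_\gamma(\eta_p)=\eta_p$ for every loop $\gamma$ based at $p$. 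Theorem~1.1 gives this only for null-homotopic $\gamma$; for a non-contractible loop, $P^{g(t)}_\gamma$ lies in the full holonomy $\Hol_p(g(t))$, over which the paper asserts no direct control, and continuity in $t$ alone does not force it to fix $\eta_p$.

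The clean fix, and the one most in keeping with the paper's framework, is to abandon the parallel-transport construction and instead define $\eta(t)$ by the fibrewise ODE $D_t\eta=0$ of equation~(3.2), solved backward from $\eta(T)=\eta$. This is automatically a smooth family of global sections of $T^k_l(M)$, so the $\pi_1$ issue never arises. To see that $A\defn\nabla_{g(t)}\eta(t)$ vanishes identically, compute $D_tA_m=[D_t,\nabla_m]\eta$ via (4.5): the term $T_{rrmpq}\Lambda^q_p\eta$ splits using $\Pb+\Ph=\Id$ on the $pq$ slot. The $\Pb$-piece $T_{rrmuv}\Pb_{uvpq}\Lambda^q_p\eta$ vanishes because $\Pb_{uvpq}\Lambda^q_p\eta$ is the infinitesimal action of an element of $\mathcal{H}=\hol(g(T))$ on $\eta$, which is zero at $t=T$; since it is a polynomial in the components of $\Pb$ and $\eta$, both satisfying $D_t=0$, it remains zero for all $t$. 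The $\Ph$-piece $T_{rrmuv}\Ph_{uvpq}\Lambda^q_p\eta$ vanishes by Theorem~3.4, since $\Rm\circ\Ph\equiv 0$ and $\nabla\Ph\equiv 0$ together force $\nabla_rR_{rmuv}\Ph_{uvpq}=0$. Hence $D_tA=\Rc\ast A$, a linear ODE with $A(T)=0$, so $A\equiv 0$.
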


Since the reduced holonomy groups $\Hol^0_p(g(t))$ are connected Lie subgroups of $\SO(T_pM) \cong \SO(n)$,
Theorem \ref{thm:holonomy} is equivalent to the following infinitesimal reformulation
(with the choice $\mathcal{H} = \hol(g(T))$).

\begin{theorem}\label{thm:rangerm}	
Let $g(t)$ be a complete solution to \eqref{eq:rf} on $M^n\times[0, T]$ with 
$\sup|\Rm(x,t)| \leq K_0$.
Suppose there exists a smooth subbundle $\mathcal{H}\subset \WM$ that is 
invariant by $\nabla_{g(T)}$-parallel translation and closed under the bracket 
$[\cdot, \cdot]_{g(T)}$.  Then, if $\im{\Rm(g(T))} \subseteq \mathcal{H}$, 
it follows that $\im{\Rm(g(t))} \subseteq\mathcal{H}$ and
that $\mathcal{H}$ remains invariant by $\nabla_{g(t)}$-parallel translation
and closed under the bracket $[\cdot, \cdot]_{g(t)}$ for all $t\in [0, T]$.  Moreover, 
$\hol_{p}(g(t))\subseteq \mathcal{H}_p$ for all $(p, t)\in M\times[0, T]$.
\end{theorem}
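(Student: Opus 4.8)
The plan is to recast the statement as a backwards-uniqueness problem for a coupled system of PDEs and then apply a Carleman-type estimate. First I would fix the bundle $\mc{H}$ given at time $T$ and, using $\nabla_{g(T)}$-parallel translation, produce the orthogonal projection $\pi \in C^\infty(\End(\WM))$ onto $\mc{H}$ with $\nabla_{g(T)}\pi = 0$. The hypothesis $\im{\Rm(g(T))} \subseteq \mc{H}$ then reads $\pi^\perp \Rm(g(T)) = 0$, where $\pi^\perp = \Id - \pi$. The key observation is that ``$\mc{H}$ is $\nabla_{g(t)}$-parallel and bracket-closed and contains $\im{\Rm(g(t))}$'' can be encoded by the vanishing at time $t$ of a finite collection of tensors built from $g(t)$, $\pi$, and $\Rm(g(t))$ — roughly, the quantities $\delt\pi$ (parallelism), $[\pi\cdot,\pi\cdot] - \pi[\cdot,\cdot]$ evaluated against the curvature (bracket-closure along the image), and $\pi^\perp\Rm$. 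One wants to show all of these vanish at every $t\in[0,T]$ given that they vanish at $t=T$.

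Next I would derive the evolution equations. Under \eqref{eq:rf}, $\Rm$ satisfies a reaction-diffusion equation $\partial_t \Rm = \Delta \Rm + Q(\Rm)$ with $Q$ quadratic, and the parallel transport operators / Christoffel symbols evolve by first-order expressions in $\nabla\Rc$. Treating $\pi$ as a solution of the ODE $\partial_t \pi = 0$ (i.e., holding the time-$T$ projection fixed as a $t$-independent section, then comparing with the geometry of $g(t)$) is one option; a cleaner route is to let $X = \delt\pi$, $Y = \pi^\perp\Rm$, together with a bracket-defect tensor $Z$, and show that $(X, Y, Z)$, coupled with their first covariant derivatives as auxiliary unknowns, satisfies a closed system of the form
\begin{equation*}
\left|\left(\partial_t + \Delta\right)\mc{U}\right| \leq C\left(|\mc{U}| + |\nabla\mc{U}|\right), \qquad \left|\partial_t \mc{V}\right| \leq C\left(|\mc{U}| + |\nabla\mc{U}| + |\mc{V}|\right),
\end{equation*}
where $\mc{U}$ collects the parabolic (curvature-type) unknowns and $\mc{V}$ the transport-type ones, and $C$ depends only on $n$ and $K_0$. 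The uniform curvature bound $\sup|\Rm| \leq K_0$ is what makes the coefficients of this system bounded, and — via Shi's derivative estimates for $t$ bounded away from $0$, plus a short-time argument near $0$ — gives the higher-order bounds needed to justify the integrations by parts in the Carleman argument on the complete manifold.

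I would then invoke the backwards-uniqueness machinery for such mixed parabolic–ODE systems — the energy/Carleman estimates in the style of Escauriaza–Seregin–Šverák, adapted to the Ricci flow setting as in the author's earlier work on backwards uniqueness for \eqref{eq:rf} — to conclude that $\mc{U} \equiv 0$ and $\mc{V} \equiv 0$ on $M\times[0,T]$ from their vanishing at $t = T$. Care is needed to reduce the problem to one on a fixed background: one compares the time-dependent metrics $g(t)$ with $g(T)$, absorbing the (bounded, by $K_0$) discrepancy in the connection and Laplacian into the lower-order terms. The main obstacle I anticipate is precisely the setup rather than the estimate: identifying a system of tensors that (i) is genuinely closed under $(\partial_t + \Delta)$ and $\partial_t$ modulo terms controlled by the system itself and its first derivatives, and (ii) whose simultaneous vanishing is equivalent to the full geometric conclusion (parallelism of $\mc{H}$, bracket-closure, and containment of $\im{\Rm}$). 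Once $\delt\pi \equiv 0$ and $\pi^\perp\Rm \equiv 0$ are established for all $t$, the bracket-closure of $\mc{H}$ under $[\cdot,\cdot]_{g(t)}$ and the inclusion $\hol_p(g(t)) \subseteq \mc{H}_p$ follow from the Ambrose–Singer theorem together with parallel transport of the curvature, since the holonomy algebra is generated by parallel translates of the image of $\Rm$.
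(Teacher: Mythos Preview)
Your overall strategy matches the paper's: encode the conclusion as the vanishing of a tensor system, show the system is closed under a mixed parabolic/ODE structure, and apply the backwards-uniqueness theorem from \cite{Kotschwar}. The Ambrose--Singer step at the end is also exactly right. However, several of the specific choices you leave open are the ones that carry the actual content, and some of your tentative choices would not work.

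First, the extension of the projection. You propose either holding $\pi$ fixed ($\partial_t\pi=0$) or leaving the extension unspecified. The paper extends $\Ph$ by the fiberwise ODE $D_t\Ph=0$, i.e.\ $\partial_t\Ph_{abcd} = -R_{ap}\Ph_{pbcd}-\cdots$, which is \emph{not} $\partial_t\Ph=0$. This choice is essential: it guarantees that $\Pb(t),\Ph(t)$ remain complementary $g(t)$-orthogonal projections and that $H(t)=\im{\Pb(t)}$ stays closed under $[\cdot,\cdot]_{g(t)}$ for all $t$ (Lemma~\ref{lem:proj}). With the naive extension $\partial_t\pi=0$, $\pi$ ceases to be a $g(t)$-orthogonal projection and the algebraic identity $\mc{T}[\Pb,\Pb,\Ph]=0$ (equation~\eqref{eq:tvan}) fails for $t<T$; this identity is precisely what kills the dangerous terms in the reaction $\Qc(\Rm)\circ\Ph$ and $\Sc(\Rm,\nabla\Rm)\circ(\Id\times\Ph)$. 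A consequence is that your bracket-defect tensor $Z$ is unnecessary: with the right extension, bracket-closure of $H(t)$ is automatic and need not be tracked.

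Second, closing the system. The paper finds that $\Rh=\Rm\circ\Ph$ and $A=\nabla\Ph$ alone do not close: $(\partial_t-\Delta)\Rh$ produces a $\Rm\ast\Delta\Ph$ term, forcing one to adjoin $B=\nabla\nabla\Ph$ to the ODE block; then $D_tB$ produces $\nabla\nabla\Rm\ast\Ph$, forcing one to adjoin $\Th=\nabla\Rm\circ(\Id\times\Ph)$ to the parabolic block. The final system is $\ve{X}=\Rh\oplus\Th$, $\ve{Y}=A\oplus B$. Your ``first covariant derivatives as auxiliary unknowns'' gestures at this but does not identify the components; the discovery that the system stabilizes at this level is the main computation (Section~\ref{sec:pdeode}). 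Also, your operator should be $\partial_t-\Delta$, not $\partial_t+\Delta$.

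Third, even after backwards uniqueness gives $\Rm\circ\Ph\equiv 0$ and $\nabla\Ph\equiv 0$, one only knows $\im{\Rm(g(t))}\subset H(t)$ with $H(t)$ a priori time-dependent. The paper needs a separate argument (Lemma~\ref{lem:imagerm}) to show $H(t)\equiv\mc{H}$; this uses the evolution of an adapted frame and the bracket structure again. You do not mention this step.
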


We divide the proof of Theorem \ref{thm:rangerm} into several steps.
In Section \ref{sec:ncasuc}, we reduce it to a problem of unique continuation for a certain system; this is
Theorem \ref{thm:rangermredux}.  In Section \ref{sec:pdeode} 
we embed this system in a larger (closed) system of coupled partial- and ordinary-differential inequalities.
The bulk of the work is the verification that this larger system is indeed closed; for this we must perform
a rather careful analysis of the evolution equations of the components of our system. For the unique continuation,
we ultimately appeal to a special case of an earlier result of the author \cite{Kotschwar} 
for parabolic PDE-ODE systems. The approach in that reference was inspired by work of Alexakis \cite{Alexakis} 
on weakly-hyperbolic systems arising in the study of the 
vacuum Einstein equations.   

We remark that, although we restrict our attention to the Ricci flow in the present paper, the basic method
can be applied to study the holonomy of families of metrics arising from other geometric evolution equations.  
For example, a result analogous to Theorem \ref{thm:holonomy} holds for the metrics induced by the mean curvature flow of
hypersurfaces in Euclidean space (and, with additional conditions, in more general ambient spaces); 
we intend to address this in a future note.

\section{Motivation: non-expansion of holonomy.}

As we mentioned above, it is a result of Hamilton (cf. \cite{Hamilton4D}, \cite{HamiltonSingularities})
that a solution to Ricci flow with holonomy initially restricted to some subgroup of $\SO(n)$ will continue to have its holonomy so restricted.
For this paper, the statement of this ``non-expansion'' result we have in mind is the following.
\begin{theorem}[Hamilton]\label{thm:holonomynonex}
 Suppose $g(t)$ is a smooth complete solution to \eqref{eq:rf} with $g(0) = g_0$ and 
  $|\Rm(g(x, t))|\leq K_0$ on $M^n\times [0, T]$. If $\Hol^0(g_0) = G \subset \SO(n)$, 
we have $\Hol^0(g(t))\subset G$ for $0\leq t \leq T$.
\end{theorem}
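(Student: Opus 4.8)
I would prove the preceding theorem --- a result of Hamilton \cite{Hamilton4D} --- as follows. The plan is to encode the reduction $\Hol^0(g_0) = G$ by a finite family of $\del_{g_0}$-parallel tensor fields, to evolve each of them by a tensorial heat equation adapted to \eqref{eq:rf}, and to observe that parallelism is then propagated \emph{forward} in time by the maximum principle.

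\textbf{Step 1: reduction to parallel tensors.} The restricted holonomy group $G = \Hol^0_p(g_0)$ is a closed (by Borel--Lichnerowicz) connected subgroup of $\SO(T_pM) \cong \SO(n)$, so one can choose finitely many tensors $\eta^{(1)}_0, \dots, \eta^{(N)}_0$ at $p$, lying in bundles $T^{k_a}_{l_a}M$, each fixed by $G$ and whose common stabilizer in $\SO(n)$ has identity component $G$; by the holonomy principle, each extends to a $\del_{g_0}$-parallel tensor field, still written $\eta^{(a)}_0$. It then suffices to produce, for every $a$, a smooth family $\eta^{(a)}(t)$ on $M \times [0,T]$ with $\eta^{(a)}(0) = \eta^{(a)}_0$ and $\del_{g(t)}\eta^{(a)}(t) = 0$. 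Indeed, the assignment $\eta_0 \mapsto \eta(t)$ supplied by Step~2 is linear, injective, keeps pointwise norms bounded above and away from zero, and is compatible with tensor products and contractions --- on parallel tensors the cross terms in $\Delta_{g(t)}(\eta \otimes \zeta)$ vanish and the curvature term acts as a derivation --- so, evaluating at $p$, the standard argument relating parallel tensor fields to holonomy reductions shows that the common stabilizer of $\{\eta^{(a)}(t)|_p\}$, which contains $\Hol^0_p(g(t))$, has identity component conjugate into $G$; hence $\Hol^0(g(t)) \subseteq G$ for all $t \in [0,T]$.

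\textbf{Step 2: the evolution and forward uniqueness.} Fix $\eta_0$ with $\del_{g_0}\eta_0 = 0$ and let $\eta(t)$ solve the tensorial heat equation
\[
  \pdt \eta = \Delta_{g(t)}\eta + \mathcal Q(\Rm, \eta), \qquad \eta(0) = \eta_0,
\]
where $\mathcal Q(\Rm,\eta)$ is a Lichnerowicz-type curvature term, linear in $\eta$. For any such term, $|\eta|^2$ obeys $\pdt |\eta|^2 \le \Delta_{g(t)}|\eta|^2 + C(n, K_0)|\eta|^2$, so standard linear parabolic theory on a complete manifold of bounded curvature yields a unique solution $\eta(t)$ on $M \times [0,T]$ that stays bounded; once it is known that $\del_{g(t)}\eta(t) = 0$, this estimate reduces to an ordinary differential inequality for the spatially constant function $|\eta(t)|^2$, so $|\eta(t)|^2$ stays positive provided $|\eta_0|^2$ is. The one substantive requirement on $\mathcal Q$ is that the covariant derivative $S = \del_{g(t)}\eta(t)$ satisfy a \emph{closed} linear parabolic system
\[
  \pdt S = \Delta_{g(t)} S + \Rm * S .
\]
Granting this, $|S|^2$ satisfies $\pdt |S|^2 \le \Delta_{g(t)}|S|^2 + C(n, K_0)|S|^2$ with $S(0) = 0$; since $\eta(t)$ is bounded, parabolic smoothing controls $|S(t)|^2$ on $M \times (0, T]$, and the maximum principle for complete manifolds of bounded curvature forces $S \equiv 0$, i.e.\ $\del_{g(t)}\eta(t) = 0$ for all $t \in [0,T]$. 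Applying this to each $\eta^{(a)}_0$ and invoking Step~1 finishes the proof.

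\textbf{The main obstacle.} The only genuine work is establishing the closed system for $S$: one must pin down the lower-order term $\mathcal Q(\Rm, \eta)$ and verify, through a Weitzenb\"{o}ck / second-Bianchi computation, that the terms of the form $\del\Rm * \eta$ produced by the commutators $[\pdt, \del]$ (via $\pdt\gam \sim \del\Rc$) and $[\del, \Delta_{g(t)}]$ and by differentiating $\mathcal Q$ cancel identically, leaving only $\Rm * S$. Everything else is soft. I would emphasize that the whole argument rests on a \emph{forward}-in-time uniqueness statement for parabolic systems --- a zero-initial-data solution vanishes --- which is immediate from the maximum principle. It is exactly this ingredient that is unavailable for the time-reversed assertion of Theorem~\ref{thm:holonomy}: there the analogous statement is a backward-uniqueness (unique continuation) problem for a coupled PDE-ODE system, and establishing it is the content of Theorem~\ref{thm:rangerm} and the sections that follow.
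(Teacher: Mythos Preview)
Your approach is sound and closely parallels the paper's second (appendix) proof, though with a different choice of tensor to evolve. The paper actually gives two arguments: the first uses Berger's classification, de Rham splitting, and forward uniqueness of Ricci flow to dispose of each possible holonomy type case-by-case; the second, self-contained one (Appendix~\ref{app:nonexprooftwo}) evolves the \emph{projection} $\Ph$ onto $\hol(g_0)^\perp$ by the heat equation $(D_t - \Delta)\Ph = 0$, shows $\nabla\Ph$ stays zero by the maximum principle, and then shows $\Rm\circ\Ph$ stays zero. Your proposal is essentially this second argument with the projection replaced by a generating set of $G$-invariant parallel tensors; this has the mild advantage that the second stage (controlling $\Rm\circ\Ph$) is absorbed into the holonomy--stabilizer correspondence.

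The ``main obstacle'' you flag --- identifying $\mathcal{Q}$ so that the $\nabla\Rm\ast\eta$ terms cancel in the equation for $S=\nabla\eta$ --- has a clean resolution that you should make explicit: take $\mathcal{Q}$ to be exactly the Ricci-action term that converts $\partial_t$ to the Uhlenbeck-type derivative $D_t = \partial_t + R_{ab}\Lambda^a_b$ on $\OMt$. Then $(D_t - \Delta)\eta = 0$, and the commutator formula \eqref{eq:heatnablacomm}, namely $[D_t - \Delta, \nabla_a] = 2R_{abdc}\Lambda^c_d\nabla_b + 2R_{ab}\nabla_b$, shows at once that $(D_t - \Delta)\nabla\eta = \Rm\ast\nabla\eta$ with no $\nabla\Rm$ contribution --- the second Bianchi identity is already baked into this commutator. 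This is the computation the appendix carries out for $\Ph$, and it works verbatim for any tensor. Once $\nabla\eta(t)\equiv 0$, your tensors satisfy the ODE $D_t\eta = 0$, which (as in Section~\ref{ssec:omt}) is precisely evolution by a time-dependent orthonormal frame, so the stabilizer of $\{\eta^{(a)}(t)|_p\}$ in $\SO(T_pM, g(t))$ is conjugate to that of $\{\eta^{(a)}_0|_p\}$ in $\SO(T_pM, g_0)$; this makes your Step~1 conclusion rigorous without the somewhat informal appeal to compatibility with tensor products.
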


Theorems \ref{thm:holonomy} and \ref{thm:holonomynonex} are statements about the backwards- and forwards-time
behavior of a solution to a (weakly-) parabolic system, and, despite their apparent symmetry, 
require rather different methods of proof. For the purpose of comparison, we will discuss two proofs of Theorem \ref{thm:holonomynonex} in detail. 
The first is an elementary combination of Berger's classification \cite{Berger}, de Rham's splitting theorem \cite{DeRham}, 
and the uniqueness of solutions for the Ricci flow \cite{Hamilton3D}, \cite{ChenZhu}. 
The second, which we defer to the appendix, is essentially self-contained and
closer to the argument suggested in \cite{HamiltonSingularities}.

We first give an example to show that, in general, one cannot  dispense with the restriction that $g(t)$ be complete
(cf. also the similar example on p. 247 of \cite{ChowLuNi}).
 
\begin{example}[Flat-sided sphere]\label{ex:holonomyjump}
Let $U\subset S^2$ be a proper open set, $x_0\in S^2\setminus U$, and $h_0$ a metric on $S^2$ of Gaussian curvature $K_{h_0}\geq 0$ satisfying
$K_{h_0}\equiv 0$ on $U$ but $K_{h_0}(x_0) > 0$. One can take, e.g., $x_0$ to be the north pole, $U$ a small disk about the south pole
and $\phi \in C^{\infty}(S^2, [0, 1])$
with $\phi \equiv 1$ on the upper hemisphere and $\phi \equiv 0$ on $U$. By the theorem of Kazdan-Warner
\cite{KazdanWarner}, one can find a metric $h_0$ with $K_{h_0} = \phi$, and,
for this metric, there exists $T > 0$ and a solution $h(t)$ to the Ricci flow defined for $t \in [0, T)$ with $h(0) = h_0$.
For any $a > 0$, we can define a solution $g_a(t)$ to Ricci flow on $U$ by  
\[
  g_a(x, t) \defn \left\{\begin{array}{ll} \left.h_0\right|_U(x) & (x, t) \in U \times [0, a]\\
					   h(x, t - a) & (x, t) \in U\times (a, a + T).   

    \end{array}\right.
\]
For $0 < t\leq a$, $K_{g_a(t)}\equiv 0$, but the strong maximum principle implies $K_{h(t)} > 0$ for $t >0 $, so $K_{g_a(t)} > 0$ for $t > a$.
Thus $(U, g_a(t))$ satisfies $\Hol^0(g_a(t)) = \{Id\}$ for $t\leq a$, but $\Hol^0(g_a(t)) = SO(2)$
for $a < t < T$.
\end{example}

\subsection{Non-expansion via Berger's Classification}

All of the ingredients of the proof below can be found, for example, in the combination of the references \cite{HamiltonSingularities} and \cite{Joyce}. 
The argument can be summarized very succinctly. In the category of complete solutions to the Ricci flow with
bounded curvature, any initial isometries are preserved, and product, K\"{a}hler, and Einstein initial
data extend uniquely to solutions of the same type. With the splitting theorem \cite{DeRham}
and the classification theorem \cite{Berger}
as it is now understood, this is enough to conclude that any restriction of the initial holonomy
is shared by the solution at later times.

We will refer to the following modern version of Berger's theorem (cf., e.g., Theorem 3.4.1, \cite{Joyce}).
\begin{theorem}[Berger]\label{thm:berger}
If $M^n$ is simply connected and $g$ is irreducible, then either $g$ is symmetric or exactly one of the
following hold:
\begin{enumerate}
 \item $\Hol^0{(g)} = SO(n)$,
 \item $n=2m$ with $m\geq 2$, and $\Hol^0(g) = U(m)$ in $SO(2m)$,
 \item $n=2m$ with $m\geq 2$, and $\Hol^0(g) = SU(m)$ in $SO(2m)$,
 \item $n=4m$ with $m\geq 2$, and $\Hol^0(g) = \operatorname{Sp}(m)$ in $SO(4m)$,
 \item $n=4m$ with $m\geq 2$, and $\Hol^0(g) = \operatorname{Sp}(m)\cdot \operatorname{Sp}(1)$ in $SO(4m)$,
 \item $n= 7$ and $\Hol^0(g) = G_2$ in $SO(7)$, or
 \item $n= 8$ and $\Hol^0(g) = \operatorname{Spin}(7)$ in $SO(8)$.
\end{enumerate}
\end{theorem}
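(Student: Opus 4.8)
Berger's theorem is classical, and the paper quite reasonably cites it rather than reproving it; what follows is only a sketch of the shape of a modern proof (see, e.g., \cite{Joyce}, \cite{Berger}). The plan is first to reduce to the irreducible case: by the de Rham splitting theorem \cite{DeRham}, after restricting to a neighbourhood of a point we may assume the holonomy representation $H \df \Hol^0(g)$ on $T_pM\cong\R{n}$ is irreducible, and then by the Borel--Lichnerowicz theorem $H$ is automatically a \emph{closed}, hence compact, connected subgroup of $\SO(n)$. It remains to show that, unless $(M,g)$ is locally symmetric, $H$ appears on the stated list. I would organize the argument into three stages.

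The first and decisive stage is Berger's reduction, for which I would follow Simons: an irreducible Riemannian holonomy group that is \emph{not} the isotropy representation of a symmetric space acts transitively on the unit sphere $S^{n-1}\subset\R{n}$. The idea is to view the curvature operator at $p$ as an element of $S^2(\mf{h})$ constrained by the first Bianchi identity, to note that the reduction of the holonomy to $H$ forces $\delb\Rm$ to obey further Bianchi-type relations, and to show that these relations are incompatible with $\delb\Rm\ne 0$ whenever $H$ fails to act transitively on $S^{n-1}$; the Ambrose--Singer theorem then identifies $(M,g)$ locally with a symmetric space. This step is where the genuine work lies.

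The second stage invokes the classification, due to Montgomery--Samelson and Borel, of compact connected Lie groups acting transitively and almost effectively on spheres: up to the low-dimensional exceptional coincidences, the possibilities for $H\subset\SO(n)$ are $\SO(n)$ on $S^{n-1}$; $U(m)$ and $SU(m)$ on $S^{2m-1}$; $\operatorname{Sp}(m)$, $\operatorname{Sp}(m)\cdot\operatorname{Sp}(1)$, and $\operatorname{Sp}(m)\cdot U(1)$ on $S^{4m-1}$; $G_2$ on $S^6$; $\operatorname{Spin}(7)$ on $S^7$; and $\operatorname{Spin}(9)$ on $S^{15}$. The third stage eliminates the candidates incompatible with a non-symmetric Riemannian metric: for $\operatorname{Spin}(9)\subset\SO(16)$ and for $\operatorname{Sp}(m)\cdot U(1)\subset\SO(4m)$ one checks directly that the space $\mc{K}(\mf{h})$ of algebraic curvature tensors satisfying the first Bianchi identity is so constrained---one-dimensional in the $\operatorname{Spin}(9)$ case---that $\Rm$ must coincide, up to scale, with the curvature tensor of the corresponding symmetric space, forcing $\delb\Rm=0$ (this is Alekseevskii's observation). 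What survives is exactly the list in the statement, and since every entry is realized---Kähler, Calabi--Yau, hyperkähler and quaternion-Kähler metrics, together with metrics of holonomy $G_2$ and $\operatorname{Spin}(7)$---the list is sharp as well.

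The main obstacle is the first stage: turning ``$(M,g)$ is not locally symmetric'' into ``$H$ acts transitively on $S^{n-1}$'' requires a careful analysis of how the holonomy reduction interacts with the first and second Bianchi identities. By contrast, the second stage is a direct appeal to the theory of compact transformation groups, and the third is a finite collection of explicit linear-algebra computations; neither should present serious difficulty once the transitivity reduction is in hand.
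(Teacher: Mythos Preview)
You correctly observe that the paper does not prove this statement at all: it is quoted verbatim as a classical result (with a pointer to \cite{Joyce}, Theorem 3.4.1, and the original \cite{Berger}) and used only as input to the first proof of Theorem~\ref{thm:holonomynonex}. There is therefore no ``paper's own proof'' to compare against, and your sketch of the Simons transitivity argument, the Montgomery--Samelson/Borel classification of sphere-transitive groups, and the Alekseevskii-type elimination of $\operatorname{Spin}(9)$ and $\operatorname{Sp}(m)\cdot U(1)$ is an accurate summary of the standard modern route to the result.
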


\begin{proof}[First proof of Theorem \ref{thm:holonomynonex}]

First, we may assume that $M$ is simply connected, as $\Hol^0(\tilde{g_0}) = \Hol^0(g_0)$ 
if $\tilde{g_0}$ is the lift of $g_0$ to the universal cover of $M$. 
  We may also assume $\Hol^0(g_0)$ is irreducible.
Otherwise, by  de Rham's splitting theorem, $(M, g_0)$ splits as a global product 
\[
(M, g_0) \cong (N_1\times N_2\times \cdots\times N_m, g_1 \oplus g_2\cdots \oplus g_m).
\]
Each metric $g_i$ will be complete and of bounded curvature $|\Rm(g_i)|\leq K_0$, and so, by the existence
theorems of Hamilton \cite{Hamilton3D} and Shi \cite{Shi}, each
factor $N_i$ will admit a complete solution $g_i(t)$ of bounded curvature with $g_i(0) = g_i$ on some small time interval 
$[0, T_i]$ (with $T_i$ depending only $K_0$ and $\operatorname{dim}(N_i)$). Then $\gh(t) \defn g_1(t)\oplus g_2(t)\cdots \oplus g_m(t)$ will be a complete solution of bounded curvature 
on $M \times [0, \delta]$ for $\delta >0$ equal to the minimum of the $T_i$.  But, by uniqueness, there is only
one solution of bounded curvature with initial data $g_0$, hence $g(t) \equiv \gh(t)$ on $M\times [0, \delta]$.
The argument may then be iterated on intervals of uniform size
to obtain the agreement of $g(t)$ with a product solution on all of $M\times [0, T]$.
Since we may then consider each factor independently, we may as well assume that $g_0$ is irreducible.

Now we consider each case of Theorem \ref{thm:berger} in turn. 
Suppose first that $g_0$ is symmetric. The uniqueness of solutions and the 
diffeomorphism invariance of the equation imply that $\operatorname{Isom}(g(0)) \subset \operatorname{Isom}(g(t))$. 
For a general metric $g$, denote by $A(g)$ the set of isometries
$A(g) \defn \left.\{\;\sigma_q\in \operatorname{Iso}_q(g)\;\right|\; \sigma_q^2 = Id\;\}$.
Since the composition law of $\operatorname{Isom}(g(t))\subset \operatorname{Diff}(M)$ and the set of any isometry's fixed points 
are independent of the metric, the preservation of initial isometries also implies $A(g_0)\subset A(g(t))$.  
In particular, $g(t)$ remains symmetric for $t > 0$. But for a symmetric metric $g$, each fixed representative $\Hol_p^0(g)$ 
of the isomorphism class of $\Hol^0(g)$ can be described explicitly as the subgroup of squares
of involutive isometries fixing $p$ (cf. Proposition 3.35 of \cite{Joyce}). Symbolically,  
\[
  \Hol_p^0(g) = J_p(g) \defn 
  \operatorname{Iso}_p(g)\cap \left.\{\;\sigma_q \circ \sigma_r\; \right|\; \sigma_q, \sigma_r \in A(g)\;\}. 
\]
Then $A(g_0)\subset A(g(t))$ implies $J_p(g_0)\subset J_p(g(t))$ and, since $g(t)$ is symmetric, that
\[
  \Hol^0_p(g(t)) = J_p(g(t)) \subset J_p(g_0) = \Hol_p(g_0).
\]

Therefore we are left with the seven alternatives on Berger's list.
The first of these is uninteresting, of course, as $\Hol^0(g) \subset \SO(n)$ for any metric $g$. 
The second, $\Hol^0(g_0) = U(n/2)$, implies $g_0$ is K\"{a}hler, and it is well-known that 
from a K\"{a}hler initial metric of bounded curvature
one can construct a K\"{a}hler solution of bounded curvature
by the solution of an appropriate parabolic Monge-Ampere equation for the potential. 
This solution may, a priori, only exist for a short time,
but for this period
we must have $g(t) \equiv \gh(t)$ by uniqueness (and hence $\Hol^0(g(t))\subset U(n/2)$). We may then
iterate as before to conclude the same on the entire interval of existence for $g(t)$.

This leaves five cases.  However, in each of these, $g_0$ 
is necessarily Einstein (cf. pp. 53-55 of \cite{Joyce}). (In fact, in the cases $SU(m)$, $\operatorname{Sp}(m)$, $\operatorname{Spin}(7)$, or $G_2$, 
the metric must be Ricci-flat.) But, associated to Einstein initial data $\Rc(g_0) = \rho g_0$, one can construct
the Einstein solution  $\gh(t) = (1 - 2\rho t) g_0$ which moves only by homothetical scaling.  The holonomy is obviously unchanged for this
solution and it is unique among (at least) those of uniformly bounded curvature. Thus $\gh(t) = g(t)$
and $G = \Hol^0(\gh(t)) = \Hol^0(g(t))$.
\end{proof}

\subsection{Berger's theorem and non-contraction of holonomy}

It is natural to ask whether one can fashion an analogous argument along for Theorem \ref{thm:holonomy}.
The answer seems to be ``only partially.'' The failure of this argument to extend to all cases
was, in fact, the starting point for the work in the present paper.

Of the three primary components of the preceding proof, we nevertheless retain at least two. The classification component,
coming from Berger's and de Rham's theorems and their consequences, is as applicable to $g(T)$ as it was to $g(0)$.
From \cite{Kotschwar}, we also have a counterpart to the uniqueness component: two complete solutions $g(t)$, $\gt(t)$ to \eqref{eq:rf} of uniformly bounded curvature 
that agree at $t = T > 0$ must agree at times $t < T$. From this, it follows that any isometries of $g(T)$
are shared by $g(t)$ for $t < T$, and that $g(T)$ is Einstein only if $g(t)$ is as well for $t < T$.

What we lack, rather, is the ability to construct by hand the special ``competitor'' solutions to
extend the data $g(T)$ to a solution of the same type for times $t < T$. Of course, if $g(T)$ is Einstein, 
we may still construct an extension by homothetical scaling of $g(T)$. However, when $g(T)$ is K\"{a}hler,
we cannot simply construct a K\"ahler extension $\gt(t)$ for $T - \delta < t \leq T$ by the method above, since
we must now specify instead the data for the potential at time $T$. Such 
``terminal-value'' parabolic problems are \emph{ill-posed} and lack solutions in general.  The analogous terminal-value problems for the Ricci (or Ricci - De Turck) flows
are also ill-posed, and this is an impediment, in particular, to the construction of a product extension $\gt(t) = \gt_1(t)\oplus \gt_2(t)$
for $t < T$ from product data $g(T) = g_1\oplus g_2$ on $N_1\times N_2$.  The trouble is that, 
while the product metric $g_1\oplus g_2$ belongs to $RF(M, T)$ -- the 
``image'' of the time-$T$ Ricci flow operator on $M$, we do not know whether either of the factors
$g_i$ belong to $RF(N_i, \delta)$ for \emph{any} $\delta > 0$.

While Theorem \ref{thm:holonomy} is not simply reducible to the backwards-uniqueness of solutions 
to \eqref{eq:rf}, we will show, nevertheless, that it is equivalent to the backwards-uniqueness
of a certain larger, mixed parabolic and ordinary-differential, system. 
The argument we will describe in the next section 
(and carry out in those following) will be essentially self-contained and, in particular,
independent of the theorems of Berger and de Rham.    

\section{Non-contraction of $\Hol^0(g(t))$ as a problem of unique continuation.}
\label{sec:ncasuc}

Our basic strategy is to interpret restricted holonomy as a condition on the operator
$\Rm:\WM\to \WM$. (This is also the basis of Hamilton's approach to non-expansion of holonomy 
in \cite{HamiltonSingularities}). This characterization is natural since the curvature effectively determines 
the holonomy Lie algebra
(in a manner we will review below), but it offers an additional advantage for our purposes
in that the curvature operator, unlike the metric, satisfies a strictly parabolic equation.

The representation of the holonomy Lie algebra $\hol(g(T))$ on $TM$
gives a subbundle of $\WM$ that is invariant under parallel translation and closed under the Lie-bracket
given by \eqref{eq:bracketdef}. 
The image of the curvature operator is
contained in $\hol(g(T))$ and, as $\Rm(g(T))$ is self-adjoint, its kernel at each $p$ therefore contains $\hol_p(g(T))^{\perp}$. 
The bundle $\hol(g(T))^{\perp}$
is likewise closed under parallel translation, though not in general under the Lie bracket. The following 
observation shows that (as in Theorem \ref{thm:rangerm}) we may as well consider
any parallel subalgebra $\mathcal{H}$ containing $\Rm(g(T))$, $\hol(g(T))$
being, in a sense, the minimal such $\mathcal{H}$. 

\begin{lemma}\label{lem:ambrosesinger}
Suppose $\Hc\subset \wedge^2(T^*M)$ is a smooth distribution closed under parallel transport and the Lie bracket 
\eqref{eq:bracketdef}.
If, for all $p\in M$, $\im{\Rm(g(p))} \subset \Hc_p$, then $\mf{hol}_p(g) \subset \Hc_p$.
\end{lemma}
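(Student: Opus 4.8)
The plan is to deduce this from the Ambrose--Singer holonomy theorem together with the parallel-invariance of $\Hc$. Fix $p\in M$ and identify $\WMp$ with $\mf{so}(T_pM)$ via $g(p)$. Under this identification the Ambrose--Singer theorem describes the (restricted) holonomy algebra $\hol_p(g)$ as the linear span in $\mf{so}(T_pM)$ of the endomorphisms
\[
  P_\gamma^{-1}\circ R_q(X,Y)\circ P_\gamma,
\]
where $\gamma$ runs over piecewise-smooth paths from $p$ to an arbitrary point $q\in M$, $P_\gamma\colon T_pM\to T_qM$ is $g$-parallel translation along $\gamma$, and $R_q(X,Y)\in\mf{so}(T_qM)$ denotes the curvature endomorphism $Z\mapsto R(X,Y)Z$ for $X,Y\in T_qM$. (Since $\hol_p(g)$ is the Lie algebra of both $\Hol_p(g)$ and $\Hol^0_p(g)$, there is no discrepancy here with the statement of the lemma.)

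The first step is the elementary identification $\operatorname{span}\{R_q(X,Y) : X,Y\in T_qM\} = \im{\Rm(g(q))}$: up to the usual sign conventions, $\Rm$ is the linear extension of $X\wedge Y\mapsto R(X,Y)$ to $\wedge^2T_qM\cong\WMp$, and every $2$-vector is a sum of decomposables. By hypothesis, then, $R_q(X,Y)\in\Hc_q$ for all $q$ and all $X,Y$. Next I would use that $\Hc$ is closed under parallel transport: the parallel translation induced on $\WM$ corresponds, under the identification with $\mf{so}$, to conjugation $A\mapsto P_\gamma\circ A\circ P_\gamma^{-1}$, and since $\Hc$ is preserved by $\nabla$ this action maps $\Hc_q$ isomorphically onto $\Hc_p$, so that $P_\gamma^{-1}\circ R_q(X,Y)\circ P_\gamma\in\Hc_p$ for every admissible $\gamma$, $X$, $Y$. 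As $\Hc_p$ is a linear subspace of $\WMp$, it therefore contains the span of all such elements, namely $\hol_p(g)$, and the lemma follows.

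I do not expect a genuine obstacle here: the only care required is the bookkeeping of the identifications among $\WM$, $\mf{so}(TM)$, and the induced parallel transport, and confirming that $\hol$ and the ``restricted'' object produced by Ambrose--Singer agree. It is worth noting that the bracket-closure hypothesis on $\Hc$ plays no role in this direction (that the span is automatically a subalgebra is part of the content of Ambrose--Singer); it is presumably included because in the intended application $\Hc=\hol(g(T))$ is both parallel and bracket-closed. If one preferred to avoid citing Ambrose--Singer, an alternative would be to show instead that the infinitesimal holonomy algebra, spanned at $p$ by the iterated covariant derivatives of the curvature contracted against tangent vectors, lies in $\Hc_p$ — this uses only that $\nabla$ preserves $\Gamma(\Hc)$, so that covariant differentiation of a section of $\Hc$ produces a section of $\Hc$ — and then to invoke the spatial real-analyticity of bounded-curvature solutions of \eqref{eq:rf} to identify this algebra with $\hol_p(g)$; the Ambrose--Singer route is cleaner and requires no such input.
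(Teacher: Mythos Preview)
Your argument is correct and follows essentially the same route as the paper: apply the Ambrose--Singer theorem to see that $\hol_p(g)$ is generated by parallel-transport conjugates of curvature endomorphisms, and observe that these lie in $\Hc_p$ by the hypotheses on $\im{\Rm}$ and the parallel-invariance of $\Hc$. Your remark that the bracket-closure hypothesis is not actually used in this step is also accurate.
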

\begin{proof}
This follows easily from the Ambrose-Singer theorem \cite{AmbroseSinger} (cf. also Besse \cite{Besse}, Theorem 10.58)
which says that the elements of the leftmost union in the chain of inclusions
\[
 \bigcup_{q\in M, \gamma \in \Omega_{p, q}, \omega \in \wedge^2T^*_pM} 
(\tau_{\gamma} \circ \Rm(q)\circ\tau_{\gamma}^{-1})(\omega) 
\subset \bigcup_{q\in M, \gamma \in \Omega_{p, q}}\tau_{\gamma}^*(\Hc_q) \subset \Hc_p.
\]
generate $\hol_p(g)$. 
Here $\Omega_{p,q}$ represents the space of piecewise smooth paths $\gamma:[0, 1] \to M$ with $\gamma(0) = p$,
$\gamma(1) = q$ and $\tau_{\gamma}$ represents the extension of parallel transport along the path $\gamma$ to two-forms.
\end{proof}

Assuming then we have such a $\mathcal{H}\subset \WM$, we consider its perpendicular complement $\mathcal{K}\defn \Hc^{\perp}$
and associated orthogonal projection operator $\Ph_T: \WM\to\mathcal{K}$.  
Although we ultimately wish to show that $\left.\Rm(g(t))\right|_{\Kc} \equiv 0$, we do not know
a priori whether, for $t < T$, the fibers of $\Hc$ and $\Kc$ are complementary orthogonal subspaces (or that those of $\Hc$ are closed under
the bracket \eqref{eq:bracketdef}) relative to $g(t)$.  Thus we first define time-dependent extensions
$H(t)$ and $K(t)$ for $\Hc$ and $\Kc$ that retain these properties on $[0, T]$. Then we prove $\left.\Rm(g(t))\right|_{K(t)} \equiv 0$ 
(hence $\im{\Rm(g(t))}\subset H(t)$) and use this to show $H(t)\equiv \Hc$ and $K(t) \equiv \Kc$.

We define $H(t)$ and $K(t)$ as the images of the families of projection maps $\Pb(t)$ and $\Ph(t)$ extending $\Pb_T$ and $\Ph_T$. We have 
$\nabla_{g(T)} \Pb_T \equiv \nabla_{g(T)} \Ph_T \equiv 0$ and $\Rm(g(T))\circ\Ph_T \equiv 0$, and, by spelling out
the mandate that they remain complementary orthogonal projections,
it is not hard to determine
what these extensions $\Pb(t)$ and $\Ph(t)$ ought to be, namely,
the solutions to $D_t \Ph = 0$ on $[0, T]$ with $\Pb(T) = \Pb_T$ and $\Ph(T) =\Ph_T$. 
Here $D_t$ represents a time-like vector tangent
to the submanifold of $g(t)$-orthonormal frames in the product of the frame bundle with the interval:
$F(M)\times [0, T]$ (see \eqref{eq:projode} and Section \ref{ssec:omt}).  This extension, in any event,
is achieved by solving an ODE on each fiber of $\WMp$. With $\Ph(t)$ so obtained, we arrive at the following
``backwards-uniqueness'' problem: to show
 $\Rm\circ\Ph(t) \equiv 0$ and $\nabla\Ph(t) \equiv 0$ for all $0\leq t < T$, given their vanishing at $t = T$.
Once it has been established, all that remains is to verify that $K(t) = \im{\Ph(t)}$ is in fact constant in time.  This is a consequence
of the equation satisfied by $\Ph(t)$, and we do this in Lemma \ref{lem:imagerm} below.

The remainder of the present section will be dedicated to the reduction of Theorem \ref{thm:rangerm}
to a precise statement of the backwards-uniqueness problem described above; 
this will be Theorem \ref{thm:rangermredux}.

\subsection{Some preliminaries.}
The following elementary observation
will in fact be essential to the computations in Section \ref{sec:pdeode}.
\begin{lemma}\label{lem:la}
 Suppose $V$ is a vector space with an inner product $\langle\cdot, \cdot\rangle$ and a consistent 
Lie bracket $[\cdot, \cdot]$. 
If $H\subset V$ is a subalgebra, and $K \defn H^{\perp}$, then $[H, K] \subset K$.
\end{lemma}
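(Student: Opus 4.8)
The plan is to work with a fixed $g$-orthonormal basis adapted to the splitting $V = H \oplus K$, and simply chase indices through the bracket formula $[\omega,\eta]_{ij} = g^{kl}(\omega_{ik}\eta_{lj} - \omega_{jk}\eta_{li})$. The key algebraic fact to isolate first is the self-adjointness of the adjoint action: for this particular bracket on two-forms, $\langle [\omega,\eta],\zeta\rangle = -\langle \eta, [\omega,\zeta]\rangle$ for all $\omega,\eta,\zeta \in V$. This holds because $[\omega,\cdot]$ acts on two-forms as the infinitesimal $\mathfrak{so}(n)$-action induced by viewing $\omega$ as a skew-symmetric endomorphism $\omega^\sharp$ of $T_pM$ via the metric, and the induced action on $\wedge^2 T_p^*M$ is by skew-symmetric operators with respect to the inner product $\langle\cdot,\cdot\rangle$ on two-forms. (This is the sense in which the bracket is ``consistent'' with the inner product in the hypothesis.) I would verify this identity by a short direct computation from \eqref{eq:bracketdef}, or simply cite it as the standard compatibility.

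Granting that identity, the lemma is immediate: for $h \in H$, $h' \in H$, and $k \in K$, we have
\[
 \langle [h, k], h'\rangle = -\langle k, [h, h']\rangle = 0,
\]
since $[h,h'] \in H$ (because $H$ is a subalgebra) and $k \in K = H^\perp$. As $h' \in H$ was arbitrary, $[h,k] \in H^\perp = K$. This proves $[H, K] \subset K$.

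The only real point requiring care is the adjoint-invariance identity $\langle [\omega,\eta],\zeta\rangle = -\langle\eta,[\omega,\zeta]\rangle$, which is where the specific form of the bracket enters; everything else is one line. I would present the identity as the single displayed computation and keep the rest terse. No completeness, smoothness, or Ricci-flow input is needed here — this is a purely linear-algebraic statement, invoked later fiberwise.
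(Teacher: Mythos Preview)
Your proof is correct and essentially identical to the paper's: both use the consistency hypothesis to assert that the trilinear form $(X,Y,Z)\mapsto\langle[X,Y],Z\rangle$ is antisymmetric (your adjoint-invariance identity is one face of this), and then conclude $\langle[h,k],h'\rangle=-\langle k,[h,h']\rangle=0$ from $[h,h']\in H$. The only cosmetic difference is that the paper takes the antisymmetry directly as the meaning of ``consistent,'' whereas you propose to verify it from the explicit bracket formula; either is fine.
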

\begin{proof}
  The assumption of consistency implies that the trilinear map
\[
  (X, Y, Z) \mapsto \left\langle [X, Y], Z \right\rangle
\]
is fully antisymmetric. Thus, if $h_1$, $h_2\in H$ and $k\in K$, we have
\[
  \left\langle [h_1, k], h_2\right\rangle = - \left\langle [h_1, h_2], k\right\rangle = 0
\]
as $[h_1, h_2] \in H = K^{\perp}$.
\end{proof}

Related to the trilinear form in the above proof is the following operator, which we will need
to identify in certain of our computations that follow.
\begin{definition}\label{def:tdef}
  Suppose $V$ is a vector space with inner product $\langle\cdot , \cdot \rangle$  and Lie bracket $[\cdot , \cdot ]$.
  Let
\[
    \mathcal{T}: \End(V)\times\End(V)\times \End(V) \to V^*\otimes V^*\otimes V^*
\]  
be the operator defined by
\[
  \mathcal{T}[A, B, C](v_1, v_2, v_3) 
      \defn \left\langle \left[ A(v_1), B(v_2)\right], C(v_3)\right\rangle
\]
for $A$, $B$, $C\in\End(V)$, $v_i\in V$.  
\end{definition}

For completeness, we include the proof of a few elementary properties of projection maps and parallel translation that
we will use in the sequel.

\begin{lemma}\label{lem:vb}
 Suppose $M$ is connected and 
$\pi: V\to M$ is a smooth $m$-dimensional vector bundle with connection $D$ and a compatible metric $h$ on its fibers.
(We will also use $D$ to represent the induced connection $D:\End(V) \to T^*M\otimes \End(V)$).
Let $H\subset V$ be an $l$-dimensional smooth subbundle, $K = H^{\perp}$, and $\Pb: V \to H$, $\Ph:V\to K$ the 
$h$-orthogonal projections onto $H$ and $K$.  
\begin{enumerate} 
 \item For any $X\in TM$, 
\[
\Pb\circ D_X\Pb\circ \Pb =
			  \Ph\circ D_X\Pb \circ \Ph = \Pb\circ D_X\Ph\circ\Pb = \Ph \circ D_X\Ph\circ \Ph = 0. 
\]
\item The following are equivalent: $H$ is closed under parallel translation, $K$ 
is closed under parallel translation, $D\Pb \equiv 0$, and $D\Ph \equiv 0$. 
\end{enumerate}
\end{lemma}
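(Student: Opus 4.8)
The plan is to prove (1) by a purely pointwise linear-algebra argument, and then to obtain (2) as a formal consequence of (1) together with the compatibility of $D$ with $h$. For (1), the key facts are the three identities $\Pb^2=\Pb$, $\Ph^2=\Ph$, and $\Pb+\Ph=\Id_V$ (equivalently $\Pb\Ph=\Ph\Pb=0$), all of which hold at every point of $M$ since $H$ and $K=H^\perp$ are complementary. First I would differentiate $\Pb=\Pb\circ\Pb$ in the direction $X$ using the Leibniz rule for the induced connection on $\End(V)$, obtaining $D_X\Pb=(D_X\Pb)\circ\Pb+\Pb\circ(D_X\Pb)$. Composing this on the left and right by $\Pb$ and using $\Pb^2=\Pb$ gives $\Pb\circ D_X\Pb\circ\Pb=2\,\Pb\circ D_X\Pb\circ\Pb$, hence $\Pb\circ D_X\Pb\circ\Pb=0$; the same computation with $\Ph$ in place of $\Pb$ gives $\Ph\circ D_X\Ph\circ\Ph=0$. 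For the two mixed terms, I would differentiate $\Pb\circ\Ph=0$ to get $(D_X\Pb)\circ\Ph+\Pb\circ(D_X\Ph)=0$, then compose by $\Ph$ on the left and $\Pb$ on the right and use $\Ph\Pb=0$ to kill the first term, leaving $\Ph\circ\Pb\circ D_X\Ph\circ\Pb=\Ph\circ D_X\Ph\circ\Pb$ after... more carefully, I would instead use $D_X\Ph=-D_X\Pb$ (from $\Pb+\Ph=\Id$, which is parallel) and substitute: $\Ph\circ D_X\Pb\circ\Ph=-\Ph\circ D_X\Ph\circ\Ph=0$ and $\Pb\circ D_X\Ph\circ\Pb=-\Pb\circ D_X\Pb\circ\Pb=0$. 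This disposes of all four terms.

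For (2), I would argue in a cycle. That $D\Pb\equiv 0$ and $D\Ph\equiv 0$ are equivalent is immediate from $\Ph=\Id_V-\Pb$. To see $D\Pb\equiv 0$ implies $H$ is closed under parallel translation: if $v\in H_{\gamma(0)}$ and $V(s)$ is its parallel extension along a path $\gamma$, then $\frac{d}{ds}(\Pb V)=(D_{\dot\gamma}\Pb)(V)+\Pb(D_{\dot\gamma}V)=0$, so $\Pb V(s)\equiv\Pb V(0)=v$, i.e. $V(s)\in H_{\gamma(s)}$ for all $s$; the same argument with $\Ph$ shows $K$ is closed under parallel translation. Conversely, suppose $H$ is closed under parallel translation. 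Fix $p\in M$, $X\in T_pM$, and $v\in V_p$; write $v=\Pb v+\Ph v$. Let $V(s)$ parallel-transport $\Pb v\in H_p$ along a curve with $\dot\gamma(0)=X$ and $W(s)$ parallel-transport $\Ph v\in K_p$ along the same curve. Since parallel transport is an $h$-isometry and $H_p\perp K_p$, we have $V(s)\in H_{\gamma(s)}$ and, because $h(V(s),W(s))\equiv h(\Pb v,\Ph v)=0$ is preserved and parallel transport is an isomorphism onto $H_{\gamma(s)}$ (using that $H$ is a subbundle of locally constant rank, closed under parallel transport), $W(s)\in H_{\gamma(s)}^\perp=K_{\gamma(s)}$. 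Hence $\Pb V(s)=V(s)$ and $\Pb W(s)=0$ identically, so differentiating at $s=0$ gives $(D_X\Pb)(\Pb v)=0$ and $(D_X\Pb)(\Ph v)=0$, whence $(D_X\Pb)(v)=0$. As $p$, $X$, $v$ were arbitrary, $D\Pb\equiv 0$. The analogous implication starting from ``$K$ closed under parallel translation'' is identical with the roles of $H$, $K$ interchanged, closing the cycle.

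The main obstacle — really the only subtle point — is verifying in the last step that parallel transport of $K_p$ actually lands in $K_{\gamma(s)}$ rather than merely in some $l'$-dimensional complement: this requires knowing both that $H$ is closed under parallel transport \emph{and} that $D$ is $h$-compatible, so that orthogonality to $H$ is preserved and $K$, being the orthogonal complement of a subbundle of constant rank, is hit exactly. Everything else is the Leibniz rule applied to the idempotent relations, which is routine. I would also remark, for use in the sequel, that part (1) shows the only possibly-nonzero "blocks" of $D_X\Pb$ are $\Pb\circ D_X\Pb\circ\Ph$ and $\Ph\circ D_X\Pb\circ\Pb$, which are negatives of the corresponding blocks of $D_X\Ph$; this off-diagonal structure is what makes the projections behave well under differentiation in the later computations.
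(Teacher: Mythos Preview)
Your proposal is correct and follows essentially the same approach as the paper: Leibniz applied to the idempotent relations for (1), and parallel-transport arguments exploiting $h$-compatibility for (2). The only organizational differences are that you obtain the mixed identities in (1) via $D_X\Ph=-D_X\Pb$ rather than by differentiating $\Ph\circ\Pb=0$, and in (2) you fold the implication ``$H$ parallel $\Rightarrow$ $K$ parallel'' into the proof of $D\Pb\equiv 0$ (differentiating $\Pb V\equiv V$ and $\Pb W\equiv 0$ directly) rather than first establishing it separately via transported orthonormal frames and then invoking part (1) to reduce to off-diagonal blocks, as the paper does; both routes are equally valid and of comparable length.
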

\begin{proof}

For the first claim, we fix $X\in TM$, and differentiate both sides of the identity $\Pb\circ\Pb =\Pb$ to obtain
\begin{align*}
    D_X\Pb\circ\Pb +  \Pb\circ D_X\Pb &= D_X\Pb.
\end{align*}
Pre- and post-composing both sides of this result with $\Pb$ and using again the above identity, we arrive at
\[
    2 \Pb\circ D_X\Pb \circ \Pb = \Pb\circ D_X\Pb\circ \Pb,
\]
from which we conclude $\Pb\circ D_X\Pb\circ \Pb = 0$.  For the second equality in (1), we differentiate
both sides of $\Ph\circ\Pb = 0$ to obtain
\[
 D_X \Ph \circ \Pb + \Ph \circ D_X\Pb = 0.
\]
If we now pre- and post-compose both sides with $\Ph$, the first term on the left vanishes, and we are left with
$\Ph \circ D_X\Pb\circ \Ph = 0$.  The identities for $D_X \Ph$ follow similarly.

For the second claim, first note that, since $\Pb + \Ph = \Id: V\to V$, we have $D\Pb = 0$ if and only if $D\Ph = 0$.
Suppose now that $D\Pb = D\Ph = 0$.  Given $p$, $q\in M$, $X\in H_p$ and $\gamma:[0, 1] \to M$ 
a smooth curve joining $p$ to $q$, define $X(t) \in V_{\gamma(t)}$ by parallel transport along $\gamma$.
If $T = \gamma_*(\frac{d}{dt})$, then $D_T\Ph = 0$ and $D_{T} X = 0$ along $\gamma$.
But the compatibility of the metric with $D$ implies $f(t) = |\Ph(X(t))|_h^2$ satisfies 
\[
f^{\prime}(t) = 2 \left\langle D_T\Ph(X) + \Ph\left(D_T(X)(t)\right), \Ph(X(t))\right\rangle = 0 
\]
and $f(0) = 0$.  Thus $f\equiv 0$ and, in particular, $\Ph(X(1)) = 0$, i.e., $X(1)\in K_q^{\perp} = H_{q}$.
So $H$ is closed under parallel translation. Similarly, $K$ is closed under parallel translation.

Suppose then that, on the other hand, $H$ is invariant under parallel translation.
  Let $p$, $q \in M$, $\gamma:[0, 1]\to M$ a smooth path connecting $p$ and $q$, 
  $\{V_i\}_{i=1}^l$ and $\{V_i\}_{i=l+1}^m$ be orthogonal
orthogonal bases for $H_p$ and $K_p$ respectively and $V_i(t)$ the parallel transports of $V_i$ along $\gamma$. Then
$V_i(t)\in H_{\gamma(t)}$ for $t\in [0, 1]$. For any $i$, $j$, define $A_{ij}(t) = h(\gamma(t))(V_i(t), V_j(t))$.  
Then $A^{\prime}_{ij} = 0$ as above. Since $A_{ij}(0) = \delta_{ij}$, we have $A_{ij}(t) = \delta_{ij}$.
In particular, $H_q = \operatorname{span}\{V_i(1)\}_{i=1}^l = \left(\operatorname{span}\{V_i(1)\}_{i=l+1}^m\right)^{\perp}$,
 so $K_q = \operatorname{span}\{V_i(1)\}_{i=l+1}^m$.
Since $p$ and $q$ were arbitrary, $K$ is also invariant under parallel translation; obviously we can also reverse the roles of $H$
and $K$.

Finally, suppose again that $H$ (hence, now, also $K$) is invariant under parallel translation.  We wish to show that $D\Pb \equiv 0$ (which is equivalent
to $D\Ph \equiv 0$ as remarked above),
and for this it suffices to show that $h(p)(D_X\Pb(U), W) = 0$ 
for an arbitrary $p \in M$, 
$X\in T_pM$, and $U$, $W\in V_p$.  So let 
$\gamma:(-\epsilon, \epsilon)\to M$ be any smooth curve with $\gamma(0) = p$ and 
$\gamma^{\prime}(0) = X$.  Define $U(t)$ and $W(t)$ to be the parallel transports of $U$ and $W$ along $\gamma$ 
and let $k(t) = h(\gamma(t))(\Pb U(t), W(t))$.
By the first part of this lemma, we only need to check the ``off-diagonal'' components of $D_X\Pb$, that is,
the cases in which $U$ and $W$ belong to opposite summands of $V_p = H_p \oplus K_p$. So suppose first that $U\in H_p$ and $W\in K_p$.  
Since $H$ and $K$ are invariant under parallel translation, $U(t)\in H_{\gamma(t)}$ and
$W(t)\in K_{\gamma(t)}$, thus 
\[
k(t) = h(\gamma(t))(U(t), W(t)) \equiv h(p)(U, W) = 0.
\] Similarly, if $U\in K_p$ and $W\in H_p$,
then $\Pb U(t) \equiv 0$ and $k(t) \equiv 0$.  In both cases, we have $k^{\prime}(0) = h(p)(D_X U, W) = 0$.
\end{proof}

\subsection{A time-dependent family of distributions.}

Going forward, let $g(t)$ be a smooth solution of \eqref{eq:rf} on $M^n$ for $t\in [0, T]$ and define $h\defn g(T)$.  
Given a tensor field $V\in T^k_l(M)$ that is, in some sense, ``calibrated''  to the metric $g$ at $t_0 \in [0, T]$,
there is a natural (and well-known) means of extending $V$ to a family of sections $V(t)$ for $t\in [0, T]$ 
with the promise of preserving this calibration. 
Namely, one can define $V(p,t)$ in each fiber $T^k_l(T_pM)$ as the solution of the ODE
\begin{align}\label{eq:projode}
\begin{split}
	\pdt \tsul{V}{a_1a_2\cdots a_k}{b_1b_2\cdots b_l} 
	  &= R^{a_1}_c\tsul{V}{ca_2\cdots a_k}{b_1b_2\cdots b_l}
	  + R^{a_2}_c\tsul{V}{a_1c\cdots a_k}{b_1b_2\cdots b_l}
	  +\cdots + R^{a_k}_{c}\tsul{V}{a_1a_2\cdots c}{b_1b_2\cdots b_l}\\
	  &\phantom{=} - R^{c}_{b_1}\tsul{V}{a_1a_2\cdots a_k}{cb_2\cdots b_l}
	  - R^{c}_{b_2}\tsul{V}{a_1a_2\cdots a_k}{b_1c\cdots b_l}
	  -\cdots - R^{c}_{b_l}\tsul{V}{a_1a_2\cdots a_k}{b_1b_2\cdots c}
\end{split}
\end{align}
for $t\in[0, T]$ with $V(p, t_0) = V(p)$.  
Note that if $V = g(t_0)$, this procedure simply recovers the solution $g(t)$, 
and if $V$ and $W$ are related by an identification
of $TM$ with $TM^*$ according to the metric $g(t_0)$ (i.e., by raising or lowering indices), 
then $V(t)$ and $W(t)$ will be related by the analogous identification of $TM$ and $T^*M$ according to $g(t)$ on $[0, T]$.  
Likewise, a contraction of $V$ by $g(t_0)$ evolved
according to \eqref{eq:projode} will be the same contraction of $V(t)$ by $g(t)$. 
Equation \eqref{eq:projode}
is equivalent to considering the evaluation of the fixed tensor $V$ on a time-dependent local frame evolved 
so as to preserve the pairwise inner products of the elements of the frame.
We will consider a somewhat more formal variation of this identification in the next section; in the notation
presented there, the above procedure is equivalent to finding a representative $V$ satisfying $D_t V \equiv 0$.

At present, though, \eqref{eq:projode} allows us to identify the distributions $\Hc$ and $\Kc$
with convenient relatives $H(t)$ and $K(t)$.  We 
let $\Pb_T$, $\Ph_T\in \End({\WM})$ denote, respectively, the orthogonal projections onto $\Hc$ and $\Kc$
with respect to $h$, and construct $\Pb(t)$ and $\Ph(t)$ according to the procedure
\eqref{eq:projode} with $\Pb(T) = \Pb_T$ and $\Ph(T) = \Ph_T$.  Thus, in components, and here regarded as elements
of $\End({\WM})\cong T_4(M)$ (see Section \ref{ssec:notation}),
\begin{align*}
  \pdt \Pb_{abcd} &= -R_{ap}\Pb_{pbcd}-R_{bp}\Pb_{apcd}-R_{cp}\Pb_{abpd}-R_{dp}\Pb_{abcp}\\
  \pdt \Ph_{abcd} &= -R_{ap}\Ph_{pbcd}-R_{bp}\Ph_{apcd}-R_{cp}\Ph_{abpd}-R_{dp}\Ph_{abcp}.
\end{align*}
We then define 
\[
  H(t) \defn \im{\Pb(t)} \subset\WM, \quad\mbox{ and } 
\quad K(t) \defn \im{\Ph(t)} \subset \WM.
\]
We collect here the properties of these subspaces we will need in the sequel.
\begin{lemma}\label{lem:proj}
Let $g$, $h = g(T)$, $\Hc$, $\Kc$, $H(t)$, and $K(t)$ be defined as above,
and $\dim{\Hc} = k$.  For all $t\in [0, T]$, $\dm{H(t)} = k$,
$H(t)$ is closed under the Lie bracket \eqref{eq:bracketdef} with respect to $g(t)$,
and $K(t) = H(t)^{\perp}$.  Moreover, if $\mathcal{T}$ is defined as in Definition \ref{def:tdef},
we have 
\begin{equation}\label{eq:tvan}
\mathcal{T}[\Ph, \Pb, \Pb] = \mathcal{T}[\Pb, \Ph, \Pb] = \mathcal{T}[\Pb, \Pb, \Ph] = 0. 
\end{equation}
\end{lemma}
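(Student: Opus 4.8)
The plan is to lean entirely on the remark following \eqref{eq:projode}. Since $g(t)$ is the \eqref{eq:projode}-evolution of $h = g(T)$, its inverse $g^{-1}(t)$ is the \eqref{eq:projode}-evolution of $h^{-1}$, and this evolution is compatible with tensor products, contractions, and permutations of indices; hence \emph{any} family $W(t)$ assembled from $\Pb(t)$, $\Ph(t)$, $g(t)$, $g^{-1}(t)$ by those universal operations is again the \eqref{eq:projode}-evolution of $W(T)$ --- equivalently (in the notation of the next section) is $D_t$-parallel. Because \eqref{eq:projode} is a linear ODE in each fibre, I would use this in two forms: (i) if $W(T) = 0$, then $W(t)\equiv 0$ on $[0, T]$; and (ii) any algebraic identity among such families that is valid at $t = T$ is valid on all of $[0, T]$. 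Concretely, one may also fix a $g(T)$-orthonormal frame at $p$ and flow it backwards by the frame form of \eqref{eq:projode}, under which it remains $g(t)$-orthonormal for every $t$; in the induced frame of $\WMp$ the operators $\Pb(t)$, $\Ph(t)$ then have the \emph{constant} component matrices $P_T$, $\Id - P_T$, while \eqref{eq:bracketdef} and the $g(t)$-inner product on $\WMp$ are, up to normalization of the latter, the standard Lie bracket and inner product of $\mathfrak{so}(n)$.

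First I would dispose of the statements about the projections. At $t = T$ we have $\Pb_T + \Ph_T = \Id$, $\Pb_T^2 = \Pb_T$, $\Ph_T^2 = \Ph_T$, $\Pb_T\Ph_T = \Ph_T\Pb_T = 0$, and $\Pb_T$, $\Ph_T$ are $h$-self-adjoint --- each of these is an identity among tensors built from $\Pb$, $\Ph$, $g$, $g^{-1}$ (composition of endomorphisms being a contraction of a tensor product; $h$-self-adjointness being likewise an identity among tensors built from $\Pb_T$, $\Ph_T$, and $g(T)^{\pm 1}$). By (ii) they persist for all $t\in[0,T]$, so $\Pb(t)$ and $\Ph(t)$ are complementary $g(t)$-orthogonal projections with $H(t) = \im{\Pb(t)} = \krn{\Ph(t)}$ and $K(t) = \im{\Ph(t)} = H(t)^{\perp}$; and $\dm{H(t)} = \trace\Pb(t)$, which by (ii) is constant, hence equal to $\dm{\Hc} = k$.

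Next I would treat closure of $H(t)$ under $[\cdot,\cdot]_{g(t)}$. Since \eqref{eq:bracketdef} is $g^{-1}(t)$ contracted against a product of two $2$-forms, the tensor $B(t)$ defined by $B(t)(\omega,\eta) \defn \Ph(t)([\Pb(t)\omega,\,\Pb(t)\eta]_{g(t)})$ is built from $\Pb(t)$, $\Ph(t)$, $g^{-1}(t)$ by tensor products and contractions, and $B(T) = 0$ exactly because $\Hc$ is a $[\cdot,\cdot]_h$-subalgebra on which $\Ph_T$ vanishes. By (i), $B(t)\equiv 0$, i.e. $[H(t), H(t)]_{g(t)}\subseteq \krn{\Ph(t)} = H(t)$ for every $t$.

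With these in hand, \eqref{eq:tvan} needs no further evolution argument: for fixed $t$, apply Lemma \ref{lem:la} to $V = \WMp$ with the $g(t)$-inner product and the bracket $[\cdot,\cdot]_{g(t)}$ --- which is ``consistent'' since, as one checks directly from \eqref{eq:bracketdef}, $(\omega,\eta,\zeta)\mapsto\langle[\omega,\eta],\zeta\rangle_{g(t)}$ is totally antisymmetric --- taking $H = H(t)$ and $K = K(t) = H(t)^{\perp}$; this gives $[H(t),K(t)]_{g(t)}\subseteq K(t)$. Then, with $\Tc$ as in Definition \ref{def:tdef}, $\Tc[\Ph,\Pb,\Pb](\omega_1,\omega_2,\omega_3) = \langle[\Ph\omega_1,\Pb\omega_2],\Pb\omega_3\rangle_{g(t)}$ vanishes because $[\Ph\omega_1,\Pb\omega_2]\in[K(t),H(t)]\subseteq K(t)$ and $\Pb\omega_3\in H(t) = K(t)^{\perp}$; $\Tc[\Pb,\Ph,\Pb]$ vanishes for the identical reason; and $\Tc[\Pb,\Pb,\Ph](\omega_1,\omega_2,\omega_3) = \langle[\Pb\omega_1,\Pb\omega_2],\Ph\omega_3\rangle_{g(t)}$ vanishes because $[\Pb\omega_1,\Pb\omega_2]\in[H(t),H(t)]\subseteq H(t) = K(t)^{\perp}$ and $\Ph\omega_3\in K(t)$. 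The one point demanding care --- and the step I expect to be the main bookkeeping obstacle --- is establishing (ii) rigorously: one must verify that ``$H(t)$ is a subalgebra'', ``$\Pb(t)$ is $g(t)$-orthogonal'', and ``consistency of $[\cdot,\cdot]_{g(t)}$'' are genuinely identities among tensors all governed by \eqref{eq:projode}, which forces one to track that every metric contraction is taken with $g(t)$ (or $g^{-1}(t)$) rather than with a frozen metric. There is no analytic difficulty beyond this organization, after which every assertion transfers from $t = T$.
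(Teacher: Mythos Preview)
Your proposal is correct and follows essentially the same route as the paper: the paper's proof simply says that the first three properties ``are easily verified from equations \eqref{eq:bracketdef} and \eqref{eq:projode}'' --- which is precisely your observation that tensorial identities built from $\Pb$, $\Ph$, $g$, $g^{-1}$ are preserved by the $D_t$-flow --- and that \eqref{eq:tvan} ``follows then from Lemma \ref{lem:la}'', exactly as you argue. Your write-up is a careful unpacking of what the paper leaves implicit, and the ``bookkeeping obstacle'' you flag is indeed the only content.
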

\begin{proof} The first three properties are easily verified from equations \eqref{eq:bracketdef} and
\eqref{eq:projode}. 
The last follows then from Lemma \ref{lem:la}.
\end{proof}

We now show that if it happens that $\im{\Rm(g(t))}\subset H(t)$ for all $t$,
then $H(t)$ and $K(t)$ are actually independent of time.

\begin{lemma}\label{lem:imagerm}
With $g(t)$, $H(t)$, $K(t)$ as above, suppose $\im{\Rm(g(t))} \subset H(t)$ for all $t\in [0, T]$.
Then $H(t) \equiv H(T) = \Hc$, $K(t) \equiv K(T) = \Kc$. 
\end{lemma}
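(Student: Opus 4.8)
The plan is to show that, under the hypothesis $\im{\Rm(g(t))} \subset H(t)$, the projection $\Pb(t)$ is actually constant in $t$, so that $H(t) = \im{\Pb(t)}$ is constant as well. The key point is to rewrite the defining ODE \eqref{eq:projode} for $\Pb(t)$ in a frame-adapted form and observe that the Ricci tensor, acting as an endomorphism of $\WM$ via the Kulkarni--Nomizu-type action appearing there, annihilates $\Pb(t)$ once $\Rm(g(t))$ itself has image in $H(t)$. Concretely, I would first recall (from Hamilton's work, or directly from the contracted second Bianchi identity) that on a solution to \eqref{eq:rf} one has $\pdt R_{ij} = (\Delta_L R)_{ij}$ and, more relevantly, that the Ricci endomorphism is obtained from the curvature operator by a trace: $R_{ip} = \Rm_{ipjq}g^{jq}$ up to sign conventions. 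Thus the linear map of $\WM$ given by $\omega_{ij} \mapsto R_{ip}\omega_{pj} + R_{jp}\omega_{ip}$ — precisely the operator driving \eqref{eq:projode} in the two-form indices — is built from $\Rm(g(t))$.

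The main step is then the identity $\pdt \Pb \equiv 0$. Write $\Pb(t)$ in components as an element of $\End(\WM)$, so \eqref{eq:projode} reads $\pdt \Pb_{abcd} = -R_{ap}\Pb_{pbcd} - R_{bp}\Pb_{apcd} - R_{cp}\Pb_{abpd} - R_{dp}\Pb_{abcp}$. Group the first two terms (action on the ``domain'' two-form $(ab)$) and the last two (action on the ``range'' two-form $(cd)$). Since $\Rm(g(t))$ has image in $H(t)$ and is self-adjoint, its kernel contains $K(t) = H(t)^{\perp}$; equivalently $\Pb(t) \circ \Rm(g(t)) = \Rm(g(t)) = \Rm(g(t)) \circ \Pb(t)$. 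Because the Ricci endomorphism on two-forms is a contraction of $\Rm$, one should be able to show that it preserves $H(t)$ and $K(t)$ separately — this is where I expect to have to be a little careful, since a priori a contraction of an $H(t)$-valued curvature operator need not be $H(t)$-valued as an endomorphism of $\WM$, and one must use that $H(t)$ is a subalgebra closed under the bracket together with the precise relationship between the bracket \eqref{eq:bracketdef} and the Ricci contraction. Granting that the Ricci endomorphism commutes with $\Pb(t)$, the right-hand side of the ODE becomes a commutator-type expression that, evaluated against $\Pb(t) \circ \Pb(t) = \Pb(t)$ and its complementary projection, collapses: the ``diagonal'' blocks vanish by Lemma \ref{lem:vb}(1) applied to the connection $D_t$ along the time direction, and the ``off-diagonal'' blocks vanish because the Ricci endomorphism has no off-diagonal part relative to the splitting $H(t) \oplus K(t)$. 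Hence $\pdt \Pb \equiv 0$.

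From $\pdt \Pb(t) \equiv 0$ we get $\Pb(t) \equiv \Pb(T) = \Pb_T$ as a family of endomorphisms of the fixed bundle $\WM$, so $H(t) = \im{\Pb(t)} = \im{\Pb_T} = \Hc$ for all $t$, and consequently $K(t) = H(t)^{\perp_{g(t)}}$; since $\Pb_T$ is the $h$-orthogonal projection and $\Pb(t) = \Pb_T$, the complementary projection $\Ph(t)$ also equals $\Ph_T$ and $K(t) = \Kc$. I expect the genuine obstacle to be the verification that the Ricci contraction of an $H(t)$-valued curvature operator again respects the decomposition $\WM = H(t) \oplus K(t)$ — this is not formal and must invoke the algebraic structure recorded in Lemmas \ref{lem:la} and \ref{lem:proj} (in particular the vanishing \eqref{eq:tvan}) rather than just the projection identities of Lemma \ref{lem:vb}. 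Everything else is bookkeeping with \eqref{eq:projode}.
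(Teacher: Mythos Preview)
Your overall strategy is sound and is close to what the paper does, but there is a genuine error in the execution. You write the defining ODE for $\Pb$ in its fully covariant $(0,4)$-form and then assert that the right-hand side is a commutator that collapses. It is not: grouping the two pairs of indices, the right-hand side of
\[
\pdt \Pb_{abcd} = -R_{ap}\Pb_{pbcd}-R_{bp}\Pb_{apcd}-R_{cp}\Pb_{abpd}-R_{dp}\Pb_{abcp}
\]
is the \emph{anti}-commutator $-(\mathcal{R}\circ\Pb + \Pb\circ\mathcal{R})$ of $\Pb$ with the Ricci endomorphism $\mathcal{R}$ on two-forms, and this does not vanish even when $\mathcal{R}$ commutes with $\Pb$ (take $\Hc=\WM$ on a round sphere: then $\Pb$ is the identity as an endomorphism, but its $(0,4)$ representative $\tfrac12(g_{ac}g_{bd}-g_{ad}g_{bc})$ is manifestly time-dependent). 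Your appeal to Lemma~\ref{lem:vb}(1) for the ``diagonal blocks'' fails for the same reason: that lemma differentiates the identity $\Pb\circ\Pb=\Pb$ of \emph{endomorphisms}, and in the $(0,4)$ picture the composition hides a factor of the evolving inverse metric, so the Leibniz computation acquires extra Ricci terms.

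The fix is to work with $\Pb$ as a section of the metric-independent bundle $\End(\WM)$. A short computation (raise the first pair of indices and use $\pdt g^{ab}=2R^{ab}$) shows that, as an endomorphism,
\[
\pdt\Pb = -[\mathcal{R},\Pb],
\]
which \emph{is} a commutator and does vanish once $\mathcal{R}$ preserves the splitting $H(t)\oplus K(t)$. That last assertion is the real content, as you anticipate, and it is not purely formal: one must use the first Bianchi identity to rewrite $(\mathcal{R}\omega)_{ab}$ as a sum of $\Rm(\omega)$ and a double-bracket term $M_{BC}\big[[\omega,\varphi^C],\varphi^B\big]$ (with $\Rm=-M_{BC}\varphi^B\otimes\varphi^C$), and then invoke $[H,H]\subset H$, $[H,K]\subset K$ together with the vanishing of $M_{BC}$ off the $H$-block. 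This is precisely the computation the paper carries out---only the paper packages it as the evolution of an adapted orthonormal basis $\{\varphi^A(t)\}$ rather than of $\Pb$ itself, which sidesteps the tensor-type issue and makes the Bianchi step explicit. Once you correct the bundle in which you differentiate and supply that computation, your argument and the paper's are essentially the same.
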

\begin{proof}
Let $m = n(n-1)/2$ and $p\in M$, and $h = g(p, T)$. Choose an find an $h$-orthonormal basis 
$\{\varphi^A\}_{A = 1}^{m}$ of sections for $\WMp$ such that
$\{\varphi^A\}_{A = 1}^k$ is a basis for $\Hc_p$ and $\{\varphi^A\}_{A = k+1}^m$ a basis for $\Kc_p$.
We can then use the procedure described by equation \eqref{eq:projode} on the individual forms $\varphi^A$
to a produce a family of two-forms $\{\varphi^A(t)\}_{A=1}^m$ on $T_pM$ for $t\in [0, T]$.  
This set will be a $g(t)$-orthonormal basis for $\WMp$ for any $t$, and moreover, 
\[
  \Pb(t)\left(\varphi^A(t)\right) = \left\{\begin{array}{ll}
      \varphi^A(t) & \mbox{if}\quad A \leq k\\
      0		   & \mbox{if}\quad A > k,
    \end{array}\right. 
\]
and
\[
\Ph(t)\left(\varphi^A(t)\right) = \left\{\begin{array}{ll}
      0 & \mbox{if}\quad  A \leq k\\
      \varphi^A(t) & \mbox{if}\quad  A > k.
    \end{array}\right. 
\]
Thus $\{\varphi^A(t)\}_{A = 1}^k$ and $\{\varphi^B(t)\}_{A = k+1}^{m}$ remain bases for $H(t)$ and $K(t)$, respectively.
In fact, for $t\in [0, T]$,
\[
   \Pb(t) = \sum_{A = 1}^{k}\left(\varphi^A(t)\right)^*\otimes\varphi^A(t), \quad
   \Ph(t) = \sum_{A = k+1}^{m}\left(\varphi^A(t)\right)^*\otimes\varphi^A(t).
\]

Now, for any fixed $t$, we can choose an orthonormal basis $\{e_a\}$ of $T_pM$ relative to $g(p, t)$;
in these components $g_{ab}(p,t) = \delta_{ab}$. 
Let $M$ be the symmetric matrix defined by 
\[
R_{abcd} = - M_{AB}\varphi^A_{ab}\varphi^B_{cd}.
\]
For any $A$, at $(p, t)$ we have (observing the extended summation condition),
\begin{align*}
  \pdt \varphi^A_{ab} &= -R_{aq}\varphi^A_{qb} - R_{bq}\varphi^A_{aq}\\
    &= -R_{appq}\varphi^A_{qb}- R_{bppq}\varphi^A_{aq}\\
    &= M_{BC}\left(\varphi^B_{ap}\varphi^C_{pq}\varphi^A_{qb} + \varphi^B_{bp}\varphi^C_{pq}\varphi^A_{aq}\right)\\
\begin{split}
    &= M_{BC}\left(\left(\varphi^B_{ap}[\varphi^C, \varphi^A]_{pb} + \varphi^B_{ap}\varphi^C_{bq}\varphi^A_{qp}\right)
+ \left(\varphi^B_{bp}[\varphi^C, \varphi^A]_{ap} + \varphi^B_{bp}\varphi^C_{aq}\varphi^A_{pq}\right)\right)
\end{split}\\
\begin{split}
    &= M_{BC}\left[\left[\varphi^A, \varphi^C\right], \varphi^B\right]_{ab} - R_{apbq}\varphi^A_{qp} 
    - R_{bpaq}\varphi^A_{pq}
\end{split}\\
\begin{split}
    &= M_{BC}\left[\left[\varphi^A, \varphi^C\right], \varphi^B\right]_{ab}
  + \left(R_{apbq} + R_{pbaq}\right)\varphi^A_{qp}
\end{split}\\
\begin{split}
    &= M_{BC}\left[\left[\varphi^A, \varphi^C\right], \varphi^B\right]_{ab} - R_{bapq}\varphi^A_{qp} 
\end{split}\\
\begin{split}
    &= M_{BC}\left[\left[\varphi^A, \varphi^C\right], \varphi^B\right]_{ab} + \Rm(\varphi^A)_{ab}, 
\end{split}\\
\end{align*}
where the penultimate line follows from the Bianchi identity.  That is, 
\begin{equation}\label{eq:twoformevol}
\pdt \varphi^A = L(\varphi^A) + \Rm(\varphi^A),
\end{equation}
where  $L: \WMp\to\WMp$ is the linear map determined by
\[
  L(\varphi^A) = M_{BC}\left[[\varphi^A, \varphi^C], \varphi^B\right].
\]
Note that \eqref{eq:twoformevol} is independent of the frame $\{e_a\}$.

We claim that $L$ satisfies $L(H_p(t))\subset K_p(t)$ and $L(K_p(t))\subset H_p(t)$.
First, since $\im{\Rm(p, t)}\subset H_p(t)$, and $\Rm$ is symmetric, it follows that $K_p(t)\subset \ker{(\Rm(p, t))}$,
and hence that $M_{BC} = 0$ if $B > k$ or $C > k$.
Also, by Lemmas \ref{lem:la} and \ref{lem:proj}, we have $[H_p(t), H_p(t)]\subseteq H_p(t)$,
and $[H_p(t), K_p(t)] \subseteq K_p(t)$.  Stated in terms of the structure constants 
\[
C^{AB}_C = \left\langle[\varphi^A, \varphi^B],\varphi^C\right\rangle
\]
this is $C^{AB}_C = 0$ if $A, B \leq k$ and $C > k$, or exactly one of $A$ and $B$ is greater than
$k$ and $C\leq k$.
Now,
\begin{equation}\label{eq:lexp}
  L(\varphi^A) = \sum_{B, C \leq k}\sum_{1\leq D, E \leq m} M_{BC}C^{AC}_DC^{DB}_E\varphi^E.
\end{equation}
If $A \leq k$, then each $C^{AC}_D$ is only nonzero for $D \leq k$,  
\[
  L(\varphi^A) = \sum_{B, C, D, E \leq k} M_{BC}C^{AC}_DC^{DB}_E\varphi^E
    \defn \sum_{1\leq E\leq k} S^A_{E}\varphi^E.
\]
Likewise, if $A > k$, the only non-zero occurences of $C^{AC}_D$ in \eqref{eq:lexp} are those
with $D > k$, thus restricting the non-zero occurences of $C^{DB}_E$ in the sum to those with 
$E > k$. So
\[
  L(\varphi^A) = \sum_{B, C\leq k}\sum_{k < D, E } M_{BC}C^{AC}_DC^{DB}_E\varphi^E
    \defn \sum_{k < E\leq m} T^A_{E}\varphi^E.
\]

Thus defining
\[
  V(t) \defn \left(\varphi^1(t), \varphi^2(t),\ldots, \varphi^k(t)\right)^T,\quad
   W(t) \defn \left(\varphi^{k+1}(t), \varphi^{k+2}(t), \ldots, \varphi^m(t)\right)^T,
\]
and using that $\Rm(\varphi^A) \equiv 0$ if $A > k$, we can restate \eqref{eq:twoformevol}
as a matrix equation
\[
     \left(\begin{array}{c}\dot{V}(t)\\
	      \dot{W}(t)
          \end{array}\right) 
    = \left(\begin{array}{cc} S(t) + M(t) & 0 \\
			      0    & T(t)   \end{array}\right)
    \left(\begin{array}{c} V(t)\\
	    W(t)
          \end{array}\right).
\]
It follows, then, that for all $0\leq t \leq T$, for appropriate coefficients $E^A_{B}(t)$, 
\[
  \varphi^{A}(t) = \sum_{B=1}^k E^A_{B}(t)\varphi^{B}(T) \in H(T) = \mathcal{H}
\]
if $A \leq k$.  Similarly, if $A > k$, we have $\varphi^A(t) \in \mathcal{K}$ for all $0\leq t \leq T$. 
\end{proof}

\subsection{A restatement of Theorem \ref{thm:rangerm}}

Now we are able to frame Theorem \ref{thm:rangerm} as a problem of unique continuation.  
Under the assumptions of that theorem, we have, by
Lemma \ref{lem:proj}, a $g(t)$-orthogonal
decomposition $\WM = H(t)\oplus K(t)$ where $H(t)$ remains closed under the Lie bracket.  
By the symmetry of  the operator $\Rm$, we will have $\im{\Rm(t)} \subset H(t)$ if and only if 
$K(t) \subset \ker\left(\Rm(t)\right)$, i.e., if $\Rm \circ \Ph(t) \equiv 0$ for all $t\in [0, T]$.
But if $\im{\Rm(t)} \subset H(t)$, it follows from Lemma \ref{lem:imagerm} that $H(t) \equiv \Hc$ 
and $K(t)\equiv \mathcal{K}$.  
To conclude from Lemma \ref{lem:ambrosesinger} that $\hol_p(g(t))\subset \Hc_p$, we need to know further that $H(t)=\Hc$ is
closed under parallel translation with respect to $\nabla_{g(t)}$ for all $t$.  However, by Lemma \ref{lem:vb}, this is true
if and only if $\nabla\Ph \equiv 0$ on $M\times [0, T]$.
Therefore, Theorem \ref{thm:rangerm} is a consequence of the following assertion.
\begin{theorem}\label{thm:rangermredux}
 Under the assumptions of Theorem \ref{thm:rangerm}, we have
\begin{equation}\label{eq:rangermredux}
\Rm\circ\Ph \equiv 0, \quad \nabla\Ph\equiv 0
\end{equation}
on $M\times [0, T]$ where $\Ph = \Ph(t)$ is the projection onto $K(t)$ with respect to $g(t)$ in the orthogonal decomposition
$\WM = H(t)\oplus K(t)$ provided by Lemma \ref{lem:proj}.
\end{theorem}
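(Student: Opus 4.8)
The plan is to realize Theorem \ref{thm:rangermredux} as an instance of the backward-uniqueness theorem for coupled parabolic and ordinary-differential systems of \cite{Kotschwar} (an argument itself patterned after Alexakis's treatment \cite{Alexakis} of weakly-hyperbolic systems). In the form we need, that theorem guarantees that on a complete manifold of bounded geometry, a pair $(\mathcal{X},\mathcal{Y})$ of time-dependent tensor fields on $M\times[0,T]$ satisfying pointwise inequalities of the schematic shape
\begin{align*}
  \left|\left(\pdt-\Delta\right)\mathcal{X}\right| &\leq C\left(|\mathcal{X}|+|\nabla\mathcal{X}|+|\mathcal{Y}|\right),\\
  \left|\pdt\mathcal{Y}\right| &\leq C\left(|\mathcal{X}|+|\nabla\mathcal{X}|+|\mathcal{Y}|\right)
\end{align*}
and vanishing at $t=T$ must vanish identically. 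It therefore suffices to enlarge the pair $(\Rm\circ\Ph,\nabla\Ph)$ — both of which vanish at $t=T$ under the hypotheses of Theorem \ref{thm:rangerm}, since $\im{\Rm(g(T))}\subseteq\mathcal{H}$ forces $\Rm(g(T))\circ\Ph = 0$ and the $\nabla_{g(T)}$-parallelism of $\mathcal{H}$ forces $\nabla_{g(T)}\Ph = 0$ — to a \emph{finite} system of the above type with $C$ depending only on $K_0$. Since the curvature is bounded on all of $M\times[0,T]$, Shi's local estimates \cite{Shi} bound every $\nabla^{k}\Rm$ on $M\times[\delta,T]$; running the argument there and letting $\delta\downarrow 0$ yields the conclusion on the full interval.

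The heart of the proof is the choice of the auxiliary quantities and the verification that the enlarged system closes. For the parabolic part $\mathcal{X}$ I would take $\Rm\circ\Ph$ (with its transpose $\Ph\circ\Rm$ and the block $\Ph\circ\Rm\circ\Ph$) together with a bounded number of its covariant derivatives; for the ordinary-differential part $\mathcal{Y}$ I would take $\nabla\Ph$ together with a bounded number of further covariant derivatives of $\Ph$. That \eqref{eq:projode} carries \emph{no} spatial derivative of $\Ph$ on its right-hand side is what keeps this $\mathcal{Y}$-hierarchy finite: differentiating \eqref{eq:projode} produces only covariant derivatives of $\Ph$ of the same or lower order, together with terms built from derivatives of the reaction coefficients and of the Christoffel symbols. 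Working in a frame adapted to the flow (in the sense of \eqref{eq:projode}), $\Rm$ obeys the clean reaction--diffusion equation $\pdt\Rm=\Delta\Rm+\Rm^{2}+\Rm^{\#}$, and commuting the Laplacian through the composition $\Rm\circ\Ph$ then produces only the harmless cross term $\nabla\Rm\ast\nabla\Ph=\nabla\Rm\ast\mathcal{Y}$, a term $\Rm\circ\Delta\Ph$ that is a bounded multiple of a $\mathcal{Y}$-variable, and the quadratic reaction $(\Rm^{2}+\Rm^{\#})\circ\Ph$. The one genuinely algebraic point on the parabolic side is that this reaction is itself a bounded multiple of the $\mathcal{X}$-variables: writing $\Rm=\Pb\Rm\Pb+(\Rm-\Pb\Rm\Pb)$, the remainder is a bounded combination of $\Rm\circ\Ph$ and $\Ph\circ\Rm$, while both $(\Pb\Rm\Pb)^{2}$ and $(\Pb\Rm\Pb)^{\#}$ annihilate $K(t)$ — the former because $\Pb\Rm\Pb$ already kills $K(t)$, the latter because $H(t)$ is a bracket-closed subalgebra, which is precisely the content of Lemma \ref{lem:proj} and the vanishing identities $\mathcal{T}[\Ph,\Pb,\Pb]=\mathcal{T}[\Pb,\Ph,\Pb]=\mathcal{T}[\Pb,\Pb,\Ph]=0$ (together with Lemma \ref{lem:la}). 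On the ordinary-differential side, the dangerous terms are those of the form $\nabla^{j}\Rc\ast\Ph$, which are \emph{a priori} of order $O(1)$ rather than $O(\text{defect})$ because $\Ph$ is a projection; these must be reorganized. A cancellation between the differentiated reaction term and the Christoffel-symbol correction in \eqref{eq:projode} removes the worst of them, and the residual terms are then converted, via the once-contracted second Bianchi identity, into divergences of the Riemann tensor contracted against $\Ph$; by the product rule — i.e.\ the Leibniz formula $\nabla^{k}(\Rm\circ\Ph)=\sum_{i}\nabla^{i}\Rm\circ\nabla^{k-i}\Ph$ — together with the Riemann symmetries and the identities of Section \ref{sec:ncasuc}, these reduce to quantities controlled by the $\mathcal{X}$-variables and their first derivatives.

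The step I expect to be the main obstacle is exactly this last one: carrying out the evolution equations of $\Rm\circ\Ph$, $\nabla\Ph$, and their chosen derivatives in full, organizing the many curvature-times-projection terms, and confirming that after the Bianchi substitutions and the algebraic identities of Section \ref{sec:ncasuc} \emph{every} term is a bounded-coefficient combination of the system's variables and of the first derivatives of its parabolic components — with no residue involving higher derivatives of $\Ph$ or derivatives of the full curvature tensor that have not been traded for derivatives of the defect. Granting this, the abstract theorem of \cite{Kotschwar} applies directly and yields $\Rm\circ\Ph\equiv 0$ and $\nabla\Ph\equiv 0$ on $M\times[0,T]$.
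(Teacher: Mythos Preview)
Your overall strategy matches the paper's: embed $(\Rm\circ\Ph,\nabla\Ph)$ in a closed PDE--ODE system and invoke the backward-uniqueness theorem of \cite{Kotschwar}, handling the reaction term $(\Rm^2+\Rm^{\#})\circ\Ph$ by the bracket closure of $H(t)$ exactly as you describe. But your choice of parabolic variables is a genuine gap. If $\mathcal{X}$ consists of $\Rh=\Rm\circ\Ph$ together with its covariant derivatives $\nabla^k\Rh$, the system cannot close at any finite level. The issue is a derivative-counting mismatch: the heat equation for $\nabla^k\Rh$ contains a term $\Rm\ast\Delta(\nabla^k\Ph)\sim\Rm\ast\nabla^{k+2}\Ph$, forcing $\mathcal{Y}$ to include $\nabla^{k+2}\Ph$; but the evolution $D_t\nabla^{k+2}\Ph$ then produces (after Bianchi and the bracket identity) a term $\Ph\ast\nabla^{k+1}\Th$, and since $\Th=\nabla\Rh-\Rm\ast A$ this is $\Ph\ast\nabla^{k+2}\Rh$ plus lower order --- one derivative beyond what $\nabla\mathcal{X}$ supplies. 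The paper's resolution is to adjoin not $\nabla\Rh$ but $\Th\defn(\nabla\Rm)\circ(\Id\times\Ph)$ as a \emph{separate} parabolic variable. Because $\nabla\Rm$ itself obeys a heat equation with reaction $\Rm\ast\nabla\Rm$, one finds $(D_t-\Delta)\Th$ involves only $A$, $B$, $\Rh$, $\Th$ and bounded coefficients --- no stray $\nabla^3\Ph$. The four-component system $\mathcal{X}=\Rh\oplus\Th$, $\mathcal{Y}=A\oplus B$ (with $A=\nabla\Ph$, $B=\nabla\nabla\Ph$) then closes exactly.

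On the ODE side, your Leibniz mechanism is not what actually controls the dangerous terms. In $D_tA=[D_t,\nabla]\Ph$ the commutator (after Bianchi) yields $T_{rrmpq}\Lambda^q_p\Ph_{ijkl}$, a single-index contraction of $\operatorname{div}\Rm$ against $\Ph$ that is \emph{not} a component of any $\nabla^k(\Rm\circ\Ph)$. The correct move is to insert $\Id=\Pb+\Ph$ on the $(p,q)$ indices and observe that the $\Pb$-part vanishes by the algebraic identity $\Pb_{abcd}\Lambda^d_c\Ph_{ijkl}=0$ (a direct consequence of $[H,K]\subset K$, i.e., of the $\mathcal{T}$-identities you cite), leaving only $\Ph\ast\Th$. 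The same identity handles $D_tB$, producing $\Ph\ast\nabla\Th\in\nabla\mathcal{X}$. So the bracket identity, not the product rule, is the mechanism that tames the ODE side.
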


We remark that, given the dependence of the evolutions of $\nabla_{g(t)}$ and $\Ph(t)$ on the curvature, 
the aims of proving $\Rm\circ\Ph \equiv 0$ and $\nabla\Ph\equiv 0$ are not independent. We will
establish them simultaneously in the course of proving Theorem \ref{thm:rangermredux}.

\section{A PDE-ODE System}\label{sec:pdeode}

A few back-of-the-envelope calculations should convince the reader that the system consisting of $\Rh\defn\Rm\circ\Ph$ and
$\nabla \Ph$ is neither parabolic nor too far from being so. First, it is easy to see that the application 
of the heat operator to $\Rh$ produces
a term involving unmatched second derivatives of $\Ph$. Schematically,
\[
  \left(D_t- \Delta\right)\Rh = \left(\left(D_t - \Delta\right)\Rm\right)\ast \Ph + \nabla \Rm \ast \nabla \Ph 
  + \Rm \ast\Delta\Ph,
\]
where we use $V\ast W$ to denote some linear combination of contractions of the
tensors $V$ and $W$ by the metric. Since we have only defined $\Ph$ by the means of the fiber-wise ODE $D_t \Ph= 0$, we cannot
expect to have much control over $\nabla^{(k)}\Ph$ (beyond observations on the level of (1) of Lemma \ref{lem:vb}).
A natural option is to try to adjoin $\nabla\nabla\Ph$ itself to the system. 
This addition is logically redundant from the perspective of Theorem \ref{thm:rangermredux} since
$\nabla\Ph$ will be parallel on any time-slice on which $\Ph$ is parallel, but it comes at the cost
of introducing higher order curvature terms.  This can be seen from \eqref{eq:projode} and the standard formula
\[
  \pdt \Gamma_{ij}^k = -g^{mk}\left(\nabla_i R_{jm} + \nabla_j R_{im} - \nabla_m R_{ij}\right)	,
\]
for the evolution of the Christoffel symbols, which yield 
\[
\pdt \nabla\nabla\Ph = \nabla\nabla\Rm\ast \Ph + \nabla\Rm\ast \nabla\Ph.
\]
At minimum, we must introduce a component involving $\nabla\Rm$ to our system to compensate (as it turns out,
and unlike the second derivatives of $\Ph$, the factors of $\nabla\nabla\Rm$ may be controlled by regarding them, 
effectively, as factors of $\nabla(\nabla\Rm)$). 
From the perspective of Theorem \ref{thm:rangermredux}, the correct (i.e., redundant) such component ought to be
$\Th \defn \nabla \Rm \circ(\Id\times \Ph)$,
that is, the element of 
$TM^*\otimes \End(\WM)$ given by 
\[
\Th(X, \omega) \defn (\nabla_X\Rm)(\Ph(\omega)),
\]
since it must also vanish on any time slice where $\Rh \equiv 0$ and $\nabla \Ph\equiv 0$.
(In fact, for any $X$, the images of the endomorphisms $\nabla_X\Rm(g(p))$ lie in $\hol_p(g)$, cf. Remark 10.60 of \cite{Besse}.)
Fortunately, with this addition, our system stabilizes. The tensor $\nabla \Rm$ satisfies a heat-type
equation with reaction terms containing only products and contractions of $\Rm$ and $\nabla\Rm$: 
\[
 \left(\pdt - \Delta\right)\nabla\Rm = \nabla \Rm \ast\Rm,
\]
and the Laplacian falling on the composition $\Th$ generates only contractions of first- and second-covariant 
derivatives of $\Ph$ with $\nabla\Rm$ and $\nabla\nabla \Rm$.  
Thus we see that the application of the heat operator to $\Th$ introduces no fundamentally new quantities.
While we have been rather cavalier about the manner in which the components of the terms are combined (relative to the decomposition
 $H(t)\oplus K(t)$), we nevertheless are entitled to some optimism that the collection of $\Rh$,
$\Th$, $\nabla\Ph$, and $\nabla\nabla\Ph$ will fit into a closed system of mixed differential inequalities.
We will use the rest of this section to make this heuristic argument precise. 

\subsection{Notation and statement}
\label{ssec:notation}

In this section, we assume we have a solution to Ricci flow $g(t)$ and distributions $\Hc$ and $\Kc$ as in 
Theorem \ref{thm:rangerm}.  Let $H(t)$ and $K(t)$ be the distributions described in Lemma \ref{lem:proj},
and $\Pb(t)$, $\Ph(t)$ their associated projections.  We fix notation, once and for all, for the following collection
of tensors: 
\begin{equation*}
\begin{array}{llll}
  \Rb &\defn \Rm \circ \Pb,&\quad \Rh&\defn\Rm\circ\Ph,         \\
  \Tb&\defn \nabla\Rm\circ(Id\times\Pb),&\quad\Th&\defn \nabla\Rm\circ (Id\times\Ph),\\
    A &\defn \nabla \Ph,   &\quad  B &\defn \nabla\nabla\Ph.    
\end{array}
\end{equation*}

Note that $\Pb$ and $\Ph$ are self-adjoint elements of $E \defn \End(\WM)$.  
It will be convenient to use the metric identification
of $E\cong (\WM)^*\otimes \WM$ with $\WM\otimes\WM$ and further with the
subspace of $T_4(M)$ in which the members are antisymmetric in the first two
and last two arguments.  We make this identification by selecting the normalization
\[
  V \wedge W = \frac{1}{2}(V\otimes W - W \otimes V)
\]
for $V$, $W$ in $TM$ (or $T^*M$)
With respect to a local frame $\{e_a\}$ for $T^*M$, we have
\[
\Pb_{abcd} = \langle\Pb(e_a\wedge e_b), e_c\wedge e_d\rangle,
  \qquad \Ph_{abcd} = \langle\Ph(e_a\wedge e_b), e_c\wedge e_d\rangle,
\]
so, if $\omega \in \WM$,
\[
 \Pb(\omega)_{cd} = \Pb_{abcd}\omega_{ab}, \qquad \Ph(\omega)_{cd} = \Ph_{abcd}\omega_{ab}.
\]
We also define $T_{mabcd} = \nabla_{m}R_{abcd}$.

However, for the endomorphism $\Rm$, since we wish to keep the notation $R_{abcd}$ consistent with the usual convention 
(namely, that with respect to which one has $\langle \Rm(\omega), \omega\rangle \geq 0$ and $R_{abba} \geq 0$ on the standard 
sphere), we have an additional minus sign in our formula:
\[
  \Rm(\omega)_{cd} = -R_{abcd}\omega_{ab} = R_{abdc}\omega_{ab}.
\]
Similarly,
\[
  \left(\nabla_X\Rm\right)(\omega)_{cd} =-\nabla_mR_{abcd}X_m\omega_{ab} = T_{mabdc}X_m\omega_{ab}.
\]

The tensors $\Pb$ and $\Ph$, like $R$, are symmetric in the interchange of their first and last pairs of
indices and antisymmetric in the interchange of the elements of those pairs:
\begin{align*}
  \Pb_{abcd} &= \Pb_{cdab} = -\Pb_{abdc} = -\Pb_{bacd},\\
  \Ph_{abcd} &= \Ph_{cdab} = -\Ph_{abdc} = -\Ph_{bacd}.
\end{align*}
We also have $A_{mabcd} = \nabla_m \Ph_{abcd}$, $B_{mnabcd} = \nabla_m\nabla_n\Ph_{abcd}$, for which corresponding identities hold. The tensors $R$ and $T$
are of course, also subject to the Bianchi identities. 

The tensors $\Rb$, $\Rh$, $\Tb$, $\Th$
are no longer symmetric in the interchange of the final two pairs of indices, but remain antisymmetric
in the interchange of the elements of these pairs:
\begin{align}\label{eq:nonsym1}
\begin{split}
  \Rh_{ijkl} &= \Ph_{ijab}R_{ablk} = -\Rh_{jikl} = -\Rh_{ijlk},
\end{split}\\
\begin{split}\label{eq:nonsym2}
  \Th_{mijkl} &= \Ph_{ijab}T_{mablk} = -\Th_{mjikl} = -\Th_{mijlk},
\end{split}
\end{align}
and similarly for $\Rb$ and $\Tb$.

Now, we let $E \defn \End(\WM)$, 
\begin{align*}
  \mathcal{X} &\defn E\oplus\left(T^*M\otimes E\right) \cong T_4(M) \oplus T_5(M),\\
    \qquad \mathcal{Y} &\defn \left(T^*M\otimes E\right)\oplus\left(T^*M\otimes T^*M\otimes E\right)
	\cong T_5(M)\oplus T_6(M),
\end{align*}
and define 
\[
\ve{X}(t) \defn \Rh(t)\oplus \Th(t) \in \mathcal{X}
\]
and
\[ 
\ve{Y}(t)\defn A(t)\oplus B(t)\in \mathcal{Y}. 
\]

The goal of this section is to prove the following result.
 
\begin{proposition} \label{prop:pdeode}
With the above definitions, and under the assumptions of Theorem \ref{thm:rangerm},
for all $\delta > 0$, there exists a $C = C(n, K_0, \delta, T)$ such that on $M\times[\delta, T]$,
we have
\begin{align}
\label{eq:pdeodepar}
\left|\left(\pdt -\Delta_{g(t)}\right)\ve{X}\right|_{g(t)}^2 &
    \leq C\left(\left|\ve{X}\right|_{g(t)}^2  + \left|\ve{Y}\right|_{g(t)}^2\right)\\
\label{eq:pdeodeode}
\left|\pdt\ve{Y}\right|_{g(t)}^2 
&\leq C\left(\left|\ve{X}\right|_{g(t)}^2 + \left|\nabla\ve{X}\right|_{g(t)}^2 + \left|\ve{Y}\right|_{g(t)}^2\right).
\end{align}
\end{proposition}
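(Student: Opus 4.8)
The plan is to compute the evolution equations for each of the four tensor fields $\Rh$, $\Th$, $A=\nabla\Ph$, and $B=\nabla\nabla\Ph$ separately, track every term that appears, and verify that each such term is a bounded-coefficient contraction of entries of $\ve X$, $\ve Y$, and (for the ODE part) $\nabla\ve X$. Since $g(t)$ has curvature bounded by $K_0$ on $M\times[0,T]$, interior derivative estimates of Shi give bounds $|\nabla^k\Rm|\le C(n,k,K_0,\delta)$ on $M\times[\delta,T]$ for every $k$; these bounds are what license us to absorb all explicit factors of $\Rm$, $\nabla\Rm$, $\nabla\nabla\Rm$ into the constant $C$. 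The first task is therefore to assemble the raw evolution formulas: the standard heat equations $(\pdt-\Delta)\Rm=\Rm\ast\Rm$ and $(\pdt-\Delta)\nabla\Rm=\nabla\Rm\ast\Rm$, the formula $\pdt\Gamma=\nabla\Rm$ (schematically), the commutator identities for $[\pdt,\nabla]$ and $[\Delta,\nabla]$ acting on tensors (each producing $\Rm\ast(\cdot)$ or $\nabla\Rm\ast(\cdot)$), and the defining fiberwise ODE \eqref{eq:projode} for $\Ph$, i.e. $\pdt\Ph=\Rm\ast\Ph$ in the present index convention, so that $\pdt A=\nabla(\Rm\ast\Ph)=\nabla\Rm\ast\Ph+\Rm\ast A$ and $\pdt B=\nabla\nabla\Rm\ast\Ph+\nabla\Rm\ast A+\Rm\ast B$. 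The latter two immediately give \eqref{eq:pdeodeode}: every term on the right is $(\text{bounded})\cdot\Ph$, $(\text{bounded})\cdot A$, $(\text{bounded})\cdot B$, or $(\text{bounded})\cdot\nabla\Rm\ast\Ph$; but $\nabla\Rm\ast\Ph$ is a contraction of $\Th$ up to adding and subtracting — more carefully, $\nabla_m R_{abcd}\Ph_{cdef}$ differs from an entry of $\Th$ only by the known relation $\Pb+\Ph=\Id$, and the $\Pb$-piece $\nabla_mR_{abcd}\Pb_{cdef}=\Tb_{mabef}$ is itself a bounded tensor since $\nabla\Rm$ is bounded and $|\Pb|\le\sqrt k$. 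So $\pdt B=\Rm\ast B+\nabla\Rm\ast A+(\text{bounded}\ast\Th)+(\text{bounded})$, and the lone "constant" term $\nabla\nabla\Rm\ast\Pb$ is likewise a bounded tensor; wait — it is not obviously controlled by $\ve X$ or $\ve Y$, so it must be shown to vanish or to be re-expressed. This is exactly the subtlety flagged in the text: $\nabla\nabla\Rm\ast\Pb$ must be reinterpreted as $\nabla(\nabla\Rm\circ(\Id\times\Pb))$ minus $\nabla\Rm\ast\nabla\Pb=\nabla\Rm\ast A$, i.e. $\nabla\nabla\Rm\ast\Pb=\nabla\Tb - \nabla\Rm\ast A$ — but $\nabla\Tb$ is again just a bounded tensor, so this term is harmless.

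For the parabolic estimate \eqref{eq:pdeodepar} the first component is $(\pdt-\Delta)\Rh$. Expanding $\Rh=\Rm\circ\Ph$ and using the product rule for the heat operator on a composition $\Rm_{abcd}\Ph_{cdef}$ (one must be careful: both factors carry the moving metric, and contractions are with $g(t)$, so $\Delta$ does not distribute trivially — there is a cross term $-2\nabla\Rm\ast\nabla\Ph=-2\nabla\Rm\ast A$), we get schematically
\[
(\pdt-\Delta)\Rh=\big((\pdt-\Delta)\Rm\big)\ast\Ph+\nabla\Rm\ast A+\Rm\ast(\pdt-\Delta)\Ph+\Rm\ast\nabla\Rm\ast(\cdots).
\]
Now $(\pdt-\Delta)\Rm=\Rm\ast\Rm$ is bounded, so the first term is $(\text{bounded})\ast\Ph$; the $\nabla\Rm\ast A$ term is $(\text{bounded})\ast A$, hence $O(|\ve Y|)$; and $(\pdt-\Delta)\Ph=\pdt\Ph-\Delta\Ph=\Rm\ast\Ph-\Delta\Ph$, so $\Rm\ast(\pdt-\Delta)\Ph=\Rm\ast\Rm\ast\Ph-\Rm\ast\Delta\Ph$. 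The dangerous term is $\Rm\ast\Delta\Ph$: this is a genuine second derivative of $\Ph$ that is not an entry of $\ve Y=A\oplus B$ unless we observe $\Delta\Ph=g^{mn}B_{mnabcd}$, which it is — $\Delta\Ph$ is literally a trace of $B$. Hence $\Rm\ast\Delta\Ph=(\text{bounded})\ast B=O(|\ve Y|)$. The remaining bounded$\ast$bounded pieces (like $\Rm\ast\nabla\Rm$ contracted against $\Ph$ or against metric factors arising from $\Delta$ not commuting with the pairing) contribute $O(|\ve X|)$ after using $|\Ph|\le\sqrt m$. The second component $(\pdt-\Delta)\Th$ is handled identically, now starting from $(\pdt-\Delta)\nabla\Rm=\nabla\Rm\ast\Rm$ and $\Th=\nabla\Rm\circ(\Id\times\Ph)$; the cross terms produce $\nabla\nabla\Rm\ast A$ (bounded$\ast A$) and $\nabla\Rm\ast\Delta\Ph=\nabla\Rm\ast\tr B$, again $O(|\ve Y|)$, and everything else is bounded$\ast$(bounded or $\ve X$). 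Squaring the schematic identities and using $(\sum a_i)^2\le(\#\text{terms})\sum a_i^2$ together with $|V\ast W|\le C(n)|V||W|$ yields \eqref{eq:pdeodepar}.

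The main obstacle — and the place where the bookkeeping is genuinely delicate rather than routine — is not the appearance of $\Delta\Ph$ or $\nabla\nabla\Rm$ (both of which, as above, are tamed by recognizing them as a trace of $B$ and as $\nabla\Tb-\nabla\Rm\ast A$ respectively), but rather making sure that every index contraction is taken with $g(t)$ and that the "$\ast$" in $(\pdt-\Delta)(\text{composition})$ is accounted for exactly, including the metric-variation terms that arise because $\pdt g^{ab}=2R^{ab}$ feeds extra $\Rm\ast(\cdot)$ factors into every raised index. One must also honestly check the final clause of Proposition \ref{prop:pdeode} is not circular: the estimates are only claimed to hold on $[\delta,T]$, which is precisely where Shi's bounds on $\nabla^k\Rm$ are available, and this is why the constant $C$ is allowed to depend on $\delta$. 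I would organize the proof as: (i) record the Shi bounds and the elementary evolution formulas; (ii) compute $\pdt A$, $\pdt B$ and read off \eqref{eq:pdeodeode}; (iii) compute $(\pdt-\Delta)\Rh$ in detail, carefully, identifying $\Delta\Ph=\tr B$; (iv) repeat verbatim for $(\pdt-\Delta)\Th$; (v) square and combine.
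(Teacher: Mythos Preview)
There is a genuine gap. Your schematic bookkeeping produces terms of the form ``bounded tensor times $\Ph$'' (or times $\Pb$), and you repeatedly assert these are $O(|\ve X|)$ or ``harmless.'' They are not: $\Ph$ is bounded, but it is \emph{not} an entry of $\ve X$ or $\ve Y$, so a term like $(\Rm\ast\Rm)\ast\Ph$ or $\nabla\Tb$ is merely bounded by a constant, and a constant on the right-hand side of \eqref{eq:pdeodepar} or \eqref{eq:pdeodeode} destroys the inequality. You actually notice this once (``wait --- it is not obviously controlled \ldots'') but your proposed fix, rewriting $\nabla\nabla\Rm\ast\Pb$ as $\nabla\Tb - \nabla\Rm\ast A$, still leaves the uncontrolled term $\nabla\Tb$. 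The same problem recurs in the parabolic part: the reaction term $((\pdt-\Delta)\Rm)\circ\Ph = \Qc(\Rm)\circ\Ph$ contains a piece $(\Rb^*\#\Rb^*)\circ\Ph$ that is a priori only bounded, and you offer no argument for why it is $O(|\ve X|)$.

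What you are missing is the algebraic content of the hypothesis that $H(t)$ is a subalgebra. In the paper this enters as Lemma~\ref{lem:la} (so $[H,K]\subset K$ and hence $\mathcal{T}[\Pb,\Pb,\Ph]=0$) and its consequence Lemma~\ref{lem:la2}, namely $\Pb_{abcd}\Lambda^d_c\Ph_{ijkl}=0$. These identities are exactly what kill the uncontrolled terms: in the evolution of $A$ and $B$, the dangerous $\Pb$-piece of $T_{rrm\cdot\cdot}$ acting on $\Ph$ vanishes by Lemma~\ref{lem:la2}, leaving only $\Th$- and $\nabla\Th$-terms; in the evolution of $\Rh$ and $\Th$, the piece $(\Rb^*\#\Rb^*)\circ\Ph$ (and its analog $(\Tb^*_X\#\Rb^*)\circ\Ph$) vanishes by $\mathcal{T}[\Pb,\Pb,\Ph]=0$, leaving only contractions against $\Rh$ and $\Th$. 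Without invoking the subalgebra property somewhere, the system simply does not close, and no amount of Shi estimates will rescue it.
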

Here we use the same notation to denote the metrics on $\mathcal{X}$, $\mathcal{Y}$, and $TM^*\otimes\mathcal{X}$ induced
by $g(t)$, and $\nabla = \nabla_{g(t)}$ and $\Delta_{g(t)}$ to denote the connection and Laplacian induced
on $\mathcal{X}$ by $g(t)$ and its Levi-Civita connection.

\begin{remark}
The parameter $\delta$ is an artifact of what will be an eventual application of Shi's estimates \cite{Shi}
for the derivatives of the curvature tensor, reflecting the degradation of the estimates as $t\to 0$.
If $M$ is compact, one can dispense with $\delta$ in favor of an estimate valid for all $t\in [0, T]$,
but with a constant $C$ that now also depends on the suprema of the norms of the first and second derivatives of curvature
on $M\times [0, T]$.
\end{remark}

\subsection{The orthonormal frame bundle associated to $g(t)$} \label{ssec:omt}

The verification of \eqref{eq:pdeodepar} and \eqref{eq:pdeodeode} will depend closely
on the algebraic structure of the evolution equations $\Rh$, $\Th$, $A$, $B$.  
To aid the computations, we will regard the tensors as functions on the product of the $g(t)$-orthonormal frame bundle $\OM$ with the interval $[0,T]$.
The utility of this perspective to calculations attached to the study of Ricci flow was first demonstrated 
by Hamilton in \cite{HamiltonHarnack}. For our application, we will borrow the notation
and abide by the conventions of Appendix F of \cite{RFV2P2}, thus, in particular, some commutation formulas
involving curvature will differ by a sign from their counterparts in \cite{HamiltonHarnack}.  

Following \cite{RFV2P2}, we let $\pi: \FM \to M$ denote the frame bundle of $M$ .  This is
a principal $\GL$-bundle on $M$; we take the group to act on the left.  
On $\gl$, one has the standard basis of elements
$\{e(a, b)\}_{a, b =1}^n$, with $e(a, b)_c^d = \delta^a_c\delta^d_b$.  We may fix a metric $h$ on  $\gl$
by insisting on the orthonormality of this basis with respect to $h$. Thus,
\[
      \langle e(a, b), e(c, d) \rangle_h = \delta_{d}^a\delta_b^c.
\]

Let $\mu: \GL\times \FM \to \FM$ denote the left action and, for any frame $Y$, define $\mu_Y: \GL \to \FM$ by
$\mu_Y(A) \defn \mu(A, Y)$. Then we have the isomorphism $(\mu_Y)_{*}:\gl \to T_Y(\FM_x)$ defining
the vertical spaces $\Vc_Y = \im{(\mu_Y)_{*}}$, where $\pi(Y) = x$.  At each $Y$, the Levi-Civita connection $\nabla$ of $g(t)$ defines 
complementary horizontal spaces $\Wc_Y \subset T_Y(\FM)$. For each $t\in [0, T]$, there is a unique metric 
$g^F(t)$ on $\FM$ which enforces the orthogonality of the subbundles $\Vc$ and $\Wc$ and
for which $\pi:  (\FM, g^F(t)) \to (M, g(t))$ is a submersion and
\[
(\mu_Y)_* : (\gl, h) \to \left(T_Y(F(M)_x), \left.g^F(t)\right|_{T_Y(F(M)_x)}\right)
\]
an isometry at each $Y\in \FM$.

A solution $g(t)$ to Ricci flow on $M\times (0, T)$ defines a map
\[
    g : \FM \times [0, T] \to SM_n(\mathbb{R}))
\]
with values in the symmetric $n\times n$ matrices.  Likewise, a time-dependent
family of sections of $T^k_l(M)$ may be regarded as an $O(n)$-equivariant matrix-valued
function on $\FMt \defn \FM \times [0, T]$.  
These functions are determined by their
values on the submanifold 
\[
  \OMt \defn g^{-1}(\Id) = \bigcup_{t\in [0, T]} \OM_{g(t)} \times \{t\} \subset \FMt,
\]
where $\OM_{g(t)} \subset \FM$ denotes the bundle of $g(t)$-orthogonal frames. 
It is convenient to use the same notation for the tensors under both of these interpretations.
Thus for $T\in T^1_{2}(M)$, we will write
\[
  T_{ab}^c = T(Y)_{ab}^c = T_{ab}^c(x) = T(x)(Y_a, Y_b, Y^c) 
\]
at a given $Y\in \FM$, 
where, again, $\pi(Y) = x$, and $Y^c\in T_x^*M$ is the $c$-th element of the frame dual to $Y$ at $x$.

\subsection{Elements of a global frame on $T\FMt$ and their commutators}
We continue to follow Appendix F of \cite{RFV2P2}.
From the isomorphisms $(\mu_Y)_*: \gl \to T_Y\FM_{\pi(Y)}$, we may generate a basis for each $\Vc_Y$
from $\{e(a, b)\}_{a, b = 1}^n$ by defining, for each $1\leq a,\; b \leq n$ and $Y\in \FM$,
\[
    \Lambda^a_b(Y) \defn (\mu_Y)_*e(a, b).
\]
The action of this vector field on a tensor is algebraic.  On $U\in T_2(M)$, for example, it is given by
\[
    \Lambda^{a}_b U_{ij} = \delta^a_i U_{bj} + \delta^a_{j} U_{ib},
\]
and on general $U\in T^k_l(M)$ by
\begin{align*}\begin{split}
    \Lambda^a_b U_{i_1i_2\ldots i_l}^{j_1 j_2\ldots j_k} &= 
    \delta_{i_1}^a U_{b i_2\ldots i_l}^{j_1 j_2\ldots j_k}
    +\delta_{i_2}^a U_{i_1 b\ldots i_l}^{j_1 j_2\ldots j_k} 
    + \cdots
    +\delta_{i_l}^a U_{i_1 i_2\ldots b}^{j_1 j_2\ldots j_k}\\
  &\phantom{=}
    -\delta_{b}^{j_1} U_{i_1 i_2\ldots i_l}^{a j_2\ldots j_k}
    -\delta_{b}^{j_2} U_{i_1 i_2\ldots i_l}^{j_1 a\ldots j_k}
    -\ldots
    -\delta_{b}^{j_k} U_{i_1 i_2\ldots i_l}^{j_1 j_2\ldots a}.	
\end{split}
\end{align*}

The collection $\{\La^a_b(Y)\}_{a, b=1}^n$ is an orthornormal basis for each $\Vc_Y$ with respect to 
(the restriction of) $g^F$, but these vector fields will not in general be parallel to $\OM$.
Thus it is sometimes convenient to consider instead the vectors
\begin{equation}
  \label{eq:vvectordef}
    \rho_{ab} \defn \delta_{ac}\Lambda^c_b - \delta_{bc}\Lambda^c_a.
\end{equation}
It is easily checked that the set $\{\rho_{ab}\}_{a < b}$ is an orthogonal basis for  $T_Y\OM_{\pi(x)}$.

Next we define a global frame spanning the horizontal subbundle $\Wc\subset T\FM$.  Given any $x\in M$, vector field $X \in T_xM$,
and frame $Y = (Y_1, Y_2, \ldots, Y_n) \in \FM_x$,  
define
\[
    \gamma_X(t) \defn (\tau_{\sg(t)}Y_1, \tau_{\sg(t)}Y_2, \ldots, \tau_{\sg(t)}Y_n) 
\]
where $\sigma(t)$ is any path in $M$ with $\sigma(0) = x$ and $\dot{\sigma}(t) = X$ and $
\tau_{\sg(t)}: T_{x}M \to T_{\sg(t)}M$ is parallel transport along $\sigma(t)$. 
Then we define, for $Y\in \FM$, and any $a = 1, 2, \ldots, n$,
\begin{equation}\label{eq:hvectordef}
 \left.\nabla_a\right|_Y \defn \left.\frac{d}{dt}\right|_{t=0} \gamma_{Y_a}(t).
\end{equation}
That is, we define $\left.\nabla_a\right|_Y$ to be the horizontal lift of $Y_a$ at $Y\in\FM_x$.

Local coordinates $\{x^i\}_{i=1}^n$ on $M$ define local coordinates $(\tilde{x}^i, y^i_a)$ on $\FM$
by $\tilde{x}^i = x^i\circ\pi$ and 
\[
       Y_a = y_a^i(Y)\pd{}{x^i}.
\]
In these coordinates, 
\[
    \nabla_a  = y^j_a(Y)\left(\pd{}{\tilde{x}^j} -y^k_b\Gamma^i_{kj}(\pi(Y))\pd{}{y^i_b}\right).
\]
Thus, for example, on a two-tensor $U$, 
\[
  \nabla_k(U_{ij}) = (\nabla U)_{kij} = \nabla_k U_{ij}
\]
where the leftmost expression represents the action of $\nabla_k\in T\FM$ on the $\R{n^2}$-valued function on $\FM$,
the middle expression represents the value of the $\R{n^3}$-valued function $\nabla U$ on $\FM$,
and the rightmost expression represents the tensor $\nabla U(x)$ evaluated at $Y_{k}(x)$, $Y_{i}(x)$, and $Y_{j}(x)$.
Where the interpretation is clear from the context, we will the notation of the
rightmost expression to represent all three cases.
The set $\{\left.\nabla_a\right|_{Y}\}_{a=1}^n$ is a basis for 
the horizontal space $\Wc_Y\subset T_Y\FM$ at
each $Y$. Since $\nabla_a g_{ij} = 0$, they are also tangent to $T_Y\OM$.

Finally we consider differentiation in the time direction.  As the vector $\pdt \in T\FMt$
is not in general tangent to $\OMt$, it is convenient to work instead with
the vector
\[
    D_t \defn \pdt + R_{ab}g^{bc}\Lambda_c^a
\]
which satisfies $D_t g_{ij} = 0$. On $\OMt$, it is given simply by
\[
   D_t  = \pdt + R_{ac}\Lambda^a_c.
\]
As remarked in Section 3, extending a tensor field $V$ defined on some time-slice 
to a time-dependent family via the ODE \eqref{eq:projode} is equivalent
to solving $D_t V \equiv 0$.  
In particular, for the projections $\Pb$ and $\Ph$, we have
\begin{equation}\label{eq:dtprojvanish}
  D_t \Pb_{abcd} \equiv D_t \Ph_{abcd} \equiv 0.
\end{equation}

The collection
$\{D_t\}\cup\{\nabla_a\}_{a=1}^n\cup \{\rho_{ab}\}_{1\leq a < b \leq n}$, forms the global frame field
for $T\OMt$ with respect to which we will perform our calculations (although it will be convenient to use of all elements of the set 
$\{\rho_{ab}\}_{1\leq a,\, b \leq n}$, i.e., including $\rho_{ab}$ for $ a\geq b$).  
As derivations on the frame bundle, they satisfy the following commutator relations.

\begin{lemma}\label{lem:commutator}
Restricted to $\OMt$, the vectors $D_t$, $\Lambda^a_b$, $\rho_{ab}$ and $\nabla_a$ satisfy
\begin{align}
\label{eq:lambdanablacomm}
[\Lambda^a_b, \nabla_c] &= \delta^b_c\nabla_a,\\
\label{eq:rhonablacomm}
[\rho_{ab}, \nabla_c] &= \delta_{ac}\nabla_b - \delta_{bc}\nabla_a,\\
\label{eq:nabladtcomm}
[D_t, \nabla_a] &= \nabla_bR_{ac}\rho_{bc} + R_{ac}\nabla_c = \nabla_pR_{pacb}\Lambda^b_c + R_{ac}\nabla_c,\\
\label{eq:heatnablacomm}
[D_t - \Delta, \nabla_a] &= R_{abdc}\nabla_b\rho_{cd} = 2R_{abdc}\Lambda^c_d\nabla_b + 2R_{ab}\nabla_b.
\end{align}
Here $\Delta = \nabla_p\nabla_p = \sum_{p=1}^n\nabla_p\nabla_p$.   
\end{lemma}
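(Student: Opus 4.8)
The plan is to verify each of the four relations by a direct computation with the coordinate vector fields on $\FM$ introduced above, restricting to $\OMt$ (where $g$ is the identity matrix) at the end; throughout I use the coordinate formula $\nabla_a = y^j_a\left(\partial_{\tilde x^j} - y^k_b\Gamma^i_{kj}\partial_{y^i_b}\right)$ for the horizontal lift together with the fact that each $\Lambda^a_b$ is the fundamental vertical field of $e(a,b)\in\gl$, acting algebraically as recorded in the previous subsection. The first two identities are purely bundle-theoretic and make no reference to the flow. For \eqref{eq:lambdanablacomm} one simply brackets the two coordinate vector fields: $\Lambda^a_b$ differentiates only the fibre variables $y^i_b$ and carries the fibre index $b$, while $\nabla_c$ carries the single fibre index $c$ and its Christoffel-symbol coefficients depend only on the base point, so after the cancellations the bracket collapses to $\delta^b_c\nabla_a$. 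Relation \eqref{eq:rhonablacomm} then follows at once from the definition $\rho_{ab} = \delta_{ac}\Lambda^c_b - \delta_{bc}\Lambda^c_a$ in \eqref{eq:vvectordef} and bilinearity of the bracket.

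For \eqref{eq:nabladtcomm} I would use that on $\OMt$ one has $D_t = \partial_t + R_{pq}\Lambda^p_q$ and split $[D_t,\nabla_a] = [\partial_t,\nabla_a] + [R_{pq}\Lambda^p_q,\nabla_a]$. In the first bracket $\partial_t$ reaches $\nabla_a$ only through the Christoffel symbols, so it produces the vertical field with coefficients $-\partial_t\Gamma^i_{kj}$; feeding in the standard evolution $\partial_t\Gamma^k_{ij} = -g^{mk}(\nabla_i R_{jm} + \nabla_j R_{im} - \nabla_m R_{ij})$ and rewriting the $\partial_{y^i_b}$ through the $\Lambda$'s expresses this bracket as a contraction of $\nabla\Rm$ with the vertical fields. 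In the second bracket, $R_{pq}$ is a function on $\FM$, so Leibniz and \eqref{eq:lambdanablacomm} give $[R_{pq}\Lambda^p_q,\nabla_a] = R_{pa}\nabla_p - (\nabla_a R_{pq})\Lambda^p_q$. Adding the two contributions, the uncontracted $\nabla_a R_{pq}$ pieces cancel, and using that $R_{pq}$ is symmetric (so $R_{pq}\Lambda^p_q$ only sees the symmetric part of a $\Lambda$-combination) one is left with $\nabla_bR_{ac}\rho_{bc} + R_{ac}\nabla_c$; the second displayed form is obtained from this by expanding $\rho_{bc}$ through \eqref{eq:vvectordef} and applying the second Bianchi identity.

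For \eqref{eq:heatnablacomm} I would compute $[\Delta,\nabla_a] = [\nabla_p\nabla_p,\nabla_a] = \nabla_p[\nabla_p,\nabla_a] + [\nabla_p,\nabla_a]\nabla_p$ separately and subtract it from \eqref{eq:nabladtcomm}. The only input beyond what is already available is the frame-bundle curvature identity expressing $[\nabla_p,\nabla_q]$ as a contraction of $\Rm$ with the $\Lambda$'s (with the sign dictated by the conventions of Appendix~F of \cite{RFV2P2}); commuting $\nabla_a$ past the two factors of $\nabla_p$, using \eqref{eq:lambdanablacomm} again and collecting, expresses $[\Delta,\nabla_a]$ as a sum of terms of the schematic form $\Rm\ast\Lambda\nabla$, $\Rm\ast\nabla$, and $(\nabla\Rm)\ast\Lambda$. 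When this is subtracted from \eqref{eq:nabladtcomm}, the $(\nabla\Rm)\ast\Lambda$ terms cancel — here is where the contracted second Bianchi identities $\nabla_pR_{pacb} = \nabla_cR_{ab} - \nabla_bR_{ac}$ and $\nabla_pR_{pa} = \tfrac12\nabla_aR$ enter — and after expanding $\rho_{cd}$ through \eqref{eq:vvectordef}, \eqref{eq:lambdanablacomm} and tracing one index against the Ricci tensor the remainder organizes into $R_{abdc}\nabla_b\rho_{cd} = 2R_{abdc}\Lambda^c_d\nabla_b + 2R_{ab}\nabla_b$.

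The genuinely routine parts are the first two identities and the Leibniz bookkeeping; the main obstacle is \eqref{eq:heatnablacomm}. There one must keep the sign conventions of \cite{RFV2P2} consistent throughout (the text explicitly warns that several curvature commutators differ in sign from \cite{HamiltonHarnack}), be careful to read $\nabla_b\rho_{cd}$ as a composition of frame-bundle derivations rather than a covariant Hessian, and, above all, verify that the uncontracted $\nabla\Rm$ terms generated by the evolution of the connection exactly annihilate those generated by $[\Delta,\nabla_a]$. That cancellation, which rests on the second Bianchi identity, is precisely what allows $D_t-\Delta$ to act on $\OMt$ like a bona fide heat operator and is the crux of the lemma.
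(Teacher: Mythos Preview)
Your plan is sound and, where it overlaps with the paper, agrees with it.  The chief difference is one of economy: the paper does \emph{not} rederive \eqref{eq:lambdanablacomm}, \eqref{eq:rhonablacomm}, or the first equalities in \eqref{eq:nabladtcomm} and \eqref{eq:heatnablacomm} at all --- it simply cites Appendix~F of \cite{RFV2P2} for these and only supplies the short algebraic manipulations that convert the first form of \eqref{eq:nabladtcomm} and \eqref{eq:heatnablacomm} into the second.  Those two conversions are exactly what you describe: expand $\rho_{bc}$ via \eqref{eq:vvectordef} and use the contracted second Bianchi identity for \eqref{eq:nabladtcomm}, and for \eqref{eq:heatnablacomm} write $\nabla_b\rho_{cd} = \rho_{cd}\nabla_b + [\nabla_b,\rho_{cd}]$, apply \eqref{eq:rhonablacomm}, and trace to recover the Ricci term.

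Your proposal to actually compute $[\partial_t,\nabla_a]$, $[R_{pq}\Lambda^p_q,\nabla_a]$, and $[\Delta,\nabla_a]$ from scratch is thus more self-contained than the paper's proof, at the cost of the bookkeeping you yourself flag.  If you carry it out, you will indeed need the frame-bundle curvature identity $[\nabla_p,\nabla_q] = R_{pqba}\Lambda^a_b$ (with the sign convention of \cite{RFV2P2}) and the cancellation of the $\nabla\Rm$ terms via the second Bianchi identity; these are the ingredients Appendix~F of \cite{RFV2P2} packages, so you are essentially reproducing that appendix rather than citing it.  Either route is fine; the paper's is shorter, yours is independent of the reference.
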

\begin{proof} Equations \eqref{eq:lambdanablacomm}, \eqref{eq:rhonablacomm}, and the
first equalities in \eqref{eq:nabladtcomm} and \eqref{eq:heatnablacomm} appear in 
Appendix F of \cite{RFV2P2}.  For the second equality in \eqref{eq:nabladtcomm}, we compute
\begin{align*}
  \nabla_bR_{ac}\rho_{bc} &= \nabla_bR_{ac}(\delta_{bp} \Lambda_c^p - \delta_{cp}\Lambda^p_b)\\
	      &= (\nabla_pR_{ac} - \nabla_c R_{ap})\Lambda_c^p\\
	      &= \nabla_pR_{pacb}\Lambda^b_c.
\end{align*}
For the second equality in \eqref{eq:heatnablacomm}, we compute
\begin{align*}
  R_{abdc}\nabla_b\rho_{cd} &= R_{abdc}\left(\rho_{cd}\nabla_b + [\nabla_b, \rho_{cd}] \right)\\
      &= R_{abdc}\left((\delta_{ce}\Lambda^e_d - \delta_{de}\Lambda^e_c)\nabla_b + (\delta_{db}\nabla_c - \delta_{cb}\nabla_d)\right)\\
      &= 2(R_{abdc}\Lambda_{c}^d +R_{ab})\nabla_b,
\end{align*}
using \eqref{eq:rhonablacomm} in the second line.
\end{proof}

\subsection{Evolution equations for $A$ and $B$.}

We begin by computing the evolution equations for the components of the ordinary-differential component of our
system. We will need the following consequence of Lemma \ref{lem:la}.
\begin{lemma}\label{lem:la2}
  The projections $\Pb$ and $\Ph$ satisfy
\begin{equation}\label{eq:la2}
  \Pb_{abcd}\Lambda_c^d\Ph_{ijkl} = 0.
\end{equation}
\end{lemma}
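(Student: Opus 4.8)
The plan is to unwind the left-hand side of \eqref{eq:la2} by recognizing the contraction $\Pb_{abcd}\Lambda_c^d$ as (up to the identifications fixed in Section \ref{ssec:notation}) a version of the bracket of $\Pb$ against an arbitrary element of the vertical frame, and then invoking the algebraic fact from Lemma \ref{lem:la} together with the bracket-closure of $H(t)$ recorded in Lemma \ref{lem:proj}. Concretely, the vertical vector field $\Lambda_c^d$ acts on the $4$-tensor $\Pb_{ijkl}$ by the algebraic rule spelled out in Section \ref{ssec:omt}, replacing each index in turn, so $\Lambda_c^d\Ph_{ijkl}$ is an explicit sum of four Kronecker-$\delta$ terms. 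Contracting this against $\Pb_{abcd}$ and relabeling, one finds that each of the four resulting terms is a contraction of $\Pb$ against $\Ph$ in which the ``slot'' being hit carries precisely the pattern $\langle[\Pb(\cdot),\Pb(\cdot)],\Ph(\cdot)\rangle$ or $\langle[\Pb(\cdot),\Ph(\cdot)],\Pb(\cdot)\rangle$ — i.e.\ exactly the operator $\mathcal{T}$ of Definition \ref{def:tdef} evaluated on $(\Pb,\Pb,\Ph)$ or one of its permutations.

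First I would write out $\Lambda_c^d\Ph_{ijkl}$ using the index rule for $U\in T_4(M)$, being careful about the raised index (here all indices are viewed as lowered after the metric identification, so this is really the $T_0^0\to$-type rule with signs as in the definition of $\Lambda^a_b$ acting on $T^k_l$; since $\Ph_{ijkl}$ is identified with an element of $T_4(M)$, all four slots contribute with the lower-index sign). Then I contract with $\Pb_{abcd}$ over the pair $(c,d)$. Because $\Pb$ is antisymmetric in $c\leftrightarrow d$ while the Kronecker combination $\delta^a_i$-type terms are not, only the genuinely ``mixed'' combinations survive, and after using the symmetries $\Pb_{abcd}=\Pb_{cdab}=-\Pb_{abdc}$ one collects the surviving terms into a fixed multiple of the components of $\mathcal{T}[\Pb,\Pb,\Ph]$, $\mathcal{T}[\Pb,\Ph,\Pb]$, and $\mathcal{T}[\Ph,\Pb,\Pb]$ (the latter two arising when the index being replaced sits on the $\Ph$ factor rather than a $\Pb$ factor, after relabeling dummy indices). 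By equation \eqref{eq:tvan} of Lemma \ref{lem:proj}, all three of these vanish identically, and hence so does the whole expression.

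The step I expect to be the main obstacle is purely bookkeeping: matching the four $\delta$-terms produced by $\Lambda_c^d$ acting on $\Pb_{ijkl}$, after contraction against $\Pb_{abcd}$, to the three components of $\mathcal{T}$ in \eqref{eq:tvan} — this requires a careful index relabeling and a correct accounting of the sign conventions in the identification $\End(\WM)\cong T_4(M)$ (in particular the extra minus sign attached to $\Rm$ versus $\Pb$, $\Ph$, and the normalization of $V\wedge W$). One must also be slightly careful that the vertical action $\Lambda_c^d$ on $\Pb$ is \emph{not} the same as $\nabla\Pb$ (which would be $A$-type data) — it is the algebraic ``rotation'' action, which is exactly why the answer is controlled by the Lie-bracket identity rather than by any differential information. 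Once the indices are lined up, the conclusion is immediate from Lemma \ref{lem:proj}; no analysis or PDE input is needed, so the proof is short.

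\begin{proof}
Viewing $\Pb$ and $\Ph$ as elements of $T_4(M)$ via the identifications of Section \ref{ssec:notation}, the vertical vector field $\Lambda_c^d$ acts on $\Ph_{ijkl}$ by the algebraic rule of Section \ref{ssec:omt}, namely as a sum of four terms obtained by replacing each lower index in turn. Contracting with $\Pb_{abcd}$ over the pair $(c,d)$ and using the antisymmetry $\Pb_{abcd}=-\Pb_{abdc}$ together with the pair-symmetry $\Pb_{abcd}=\Pb_{cdab}$, every surviving term is, after relabeling of dummy indices, a component of one of $\mathcal{T}[\Pb,\Pb,\Ph]$, $\mathcal{T}[\Pb,\Ph,\Pb]$, or $\mathcal{T}[\Ph,\Pb,\Pb]$, with $\mathcal{T}$ as in Definition \ref{def:tdef}. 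By \eqref{eq:tvan} of Lemma \ref{lem:proj}, each of these vanishes identically, so $\Pb_{abcd}\Lambda_c^d\Ph_{ijkl}=0$.
\end{proof}
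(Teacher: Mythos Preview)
There is a genuine gap. After expanding $\Lambda_c^d\Ph_{ijkl}$ and contracting with $\Pb_{abcd}$, the four surviving terms pair off into two bracket expressions of the form
\[
\left\langle\left[\Ph(e_k\wedge e_l),\Pb(e_a\wedge e_b)\right], e_i\wedge e_j\right\rangle
\quad\text{and}\quad
\left\langle\left[\Ph(e_i\wedge e_j),\Pb(e_a\wedge e_b)\right], e_k\wedge e_l\right\rangle,
\]
i.e.\ components of $\mathcal{T}[\Ph,\Pb,\Id]$ with the third slot \emph{un}-projected. They are \emph{not} components of $\mathcal{T}[\Pb,\Pb,\Ph]$, $\mathcal{T}[\Pb,\Ph,\Pb]$, or $\mathcal{T}[\Ph,\Pb,\Pb]$ as you assert, so \eqref{eq:tvan} alone does not finish the argument. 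If you decompose $\Id=\Pb+\Ph$ in the third slot, the $\Pb$-part is indeed $\mathcal{T}[\Ph,\Pb,\Pb]$ and vanishes by \eqref{eq:tvan}; but the remaining piece is $\mathcal{T}[\Ph,\Pb,\Ph]$, which does \emph{not} vanish identically (since $[K,H]\subset K$, the bracket lands in $K$ and can pair nontrivially with $\Ph(\cdot)$).

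The point you are missing is that the two $\mathcal{T}[\Ph,\Pb,\Ph]$ contributions --- one coming from the $(i,j)$-pair and one from the $(k,l)$-pair of $\Ph_{ijkl}$ --- are related by swapping the first and third arguments of the trilinear form $(X,Y,Z)\mapsto\langle[X,Y],Z\rangle$. Since this form is fully antisymmetric (as noted in the proof of Lemma~\ref{lem:la}), the two contributions cancel. This cancellation, not \eqref{eq:tvan} by itself, is what makes the expression vanish, and it is exactly the step the paper carries out. Your remark that ``only the genuinely mixed combinations survive'' by antisymmetry of $\Pb$ is also off: all four $\delta$-terms contribute and none are killed by the $c\leftrightarrow d$ antisymmetry alone.
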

\begin{proof}
 Note that
\begin{align*}
   \Pb_{abcd}\Lambda_c^d\Ph_{ijkl} &= \Pb_{abdi}\Ph_{djkl} + \Pb_{abdj}\Ph_{idkl} 
				      + \Pb_{abdk}\Ph_{ijdl} + \Pb_{abdl}\Ph_{ijkd}\\
				   &= -\Pb_{abid}\Ph_{kldj} + \Pb_{abdj}\Ph_{klid}
				      - \Pb_{abkd}\Ph_{ijdl} + \Pb_{abdl}\Ph_{ijkd}\\
\begin{split}
	&=\left\langle\left[\Ph(e_k\wedge e_l), \Pb(e_a\wedge e_b)\right], e_i\wedge e_j\right\rangle\\
	&\phantom{=}\quad +\left\langle\left[\Ph(e_i\wedge e_j), \Pb(e_a\wedge e_b)\right], e_k\wedge e_l\right\rangle.
\end{split}  
\end{align*}
In view of Lemma \ref{lem:la}, we have $[K(t), H(t)] \subset K(t)$,
thus 
\begin{align*}
\begin{split}
   \Pb_{abcd}\Lambda_c^d\Ph_{ijkl}
  &=\left\langle\left[\Ph(e_k\wedge e_l), \Pb(e_a\wedge e_b)\right], \Ph(e_i\wedge e_j)\right\rangle\\
	&\phantom{=}\quad +\left\langle\left[\Ph(e_i\wedge e_j), \Pb(e_a\wedge e_b)\right], \Ph(e_k\wedge e_l)\right\rangle
\end{split}
\end{align*}
which vanishes on account of antisymmetry of the map $(X, Y, Z) \mapsto \langle [X, Y], Z\rangle$.
\end{proof}

In view of \eqref{eq:dtprojvanish}, the only non-zero contributions to the evolution equations for $A = \nabla\Ph$ and 
$B = \nabla\nabla \Ph$ come from the time-dependency of the connection. These contributions 
are encoded in the commutators of $D_t$ with the horizontal vectors $\nabla_a$.

\begin{proposition}\label{prop:aevol}
Regarded as a matrix-valued function on $\OMt$, the tensor $A$ evolves according
to
\begin{align}\label{eq:aevol}
\begin{split}
D_t A_{mijkl} &=  R_{mr}A_{rijkl} -\Ph_{pjkl}\Th_{rpirm} - \Ph_{ipkl}\Th_{rpjrm}\\
&\phantom{=}- \Ph_{ijpl}\Th_{rpkrm} - \Ph_{ijkp}\Th_{rplrm}.
\end{split}
\end{align}

\end{proposition}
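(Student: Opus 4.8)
The plan is to derive \eqref{eq:aevol} by pure commutation of derivatives on $\OMt$, using only that $D_t\Ph\equiv 0$ there (equation \eqref{eq:dtprojvanish}), the commutator relations of Lemma \ref{lem:commutator}, the symmetries of $R$ and $\Ph$, and the algebraic identity of Lemma \ref{lem:la2}; no analytic input is required. Since $A_{mijkl}=\nabla_m\Ph_{ijkl}$ is obtained by applying the horizontal field $\nabla_m$ to the function $\Ph_{ijkl}$ on $\OMt$, and $[D_t,\nabla_m]$ is the commutator of these vector fields,
\[
D_t A_{mijkl}=D_t\!\left(\nabla_m\Ph_{ijkl}\right)=\nabla_m\!\left(D_t\Ph_{ijkl}\right)+[D_t,\nabla_m]\Ph_{ijkl}=[D_t,\nabla_m]\Ph_{ijkl},
\]
the first term on the right vanishing because $D_t\Ph_{ijkl}$ is the zero function. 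Substituting the second form of \eqref{eq:nabladtcomm}, namely $[D_t,\nabla_m]=\nabla_pR_{pmcb}\Lambda^b_c+R_{mc}\nabla_c$, the contribution $R_{mc}\nabla_c\Ph_{ijkl}=R_{mc}A_{cijkl}$ yields, after relabelling, the term $R_{mr}A_{rijkl}$ of \eqref{eq:aevol}.

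It remains to identify the vertical contribution $\nabla_pR_{pmcb}\Lambda^b_c\Ph_{ijkl}$. The key step is that $\nabla_pR_{pmcb}$ is antisymmetric in $c\leftrightarrow b$, so that, since $\Pb+\Ph=\Id$ on $\WM$, we may write $\nabla_pR_{pmcb}=\nabla_pR_{pmxy}(\Pb_{xycb}+\Ph_{xycb})$. The summand carrying $\Pb_{xycb}$ then produces $\nabla_pR_{pmxy}\!\left(\Pb_{xycb}\Lambda^b_c\Ph_{ijkl}\right)$, whose inner factor vanishes by Lemma \ref{lem:la2}: the placement of the ``match'' index $b$ and ``replacement'' index $c$ of $\Lambda^b_c$ against the fourth and third slots of $\Pb_{xycb}$ is exactly that of the lemma (relabel $(a,b,c,d)\mapsto(x,y,c,b)$). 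In the surviving summand $\nabla_pR_{pmxy}\Ph_{xycb}\Lambda^b_c\Ph_{ijkl}$ the operator $\Lambda^b_c$ acts algebraically on the four lower indices of $\Ph_{ijkl}$, producing four terms, one for each slot of $\Ph_{ijkl}$, in which the matched index has been shifted into a $\nabla R$-factor.

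Each of these four terms is rewritten in the claimed form using $R_{abcd}=R_{cdab}=-R_{abdc}$, the pair symmetry $\Ph_{abcd}=\Ph_{cdab}$, and the definition $\Th_{mijkl}=\Ph_{ijab}T_{mablk}$ with $T_{mabcd}=\nabla_mR_{abcd}$. For instance, after relabelling the contracted derivative index as $r$, the slot-$i$ term is $\nabla_rR_{rmxy}\Ph_{xyci}\Ph_{cjkl}$, and the identities $\nabla_rR_{rmxy}=-\nabla_rR_{xymr}$ and $\Ph_{xyci}=\Ph_{cixy}$ turn it into $-\Ph_{cjkl}\Ph_{cixy}\nabla_rR_{xymr}=-\Ph_{pjkl}\Th_{rpirm}$; the slots $j,k,l$ are handled identically and give $-\Ph_{ipkl}\Th_{rpjrm}$, $-\Ph_{ijpl}\Th_{rpkrm}$, $-\Ph_{ijkp}\Th_{rplrm}$. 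The minus sign in each is traceable to the reversal of the last index pair in the definition of $\Th$ (itself a consequence of the convention $\Rm(\omega)_{cd}=-R_{abcd}\omega_{ab}$). Collecting the five terms gives \eqref{eq:aevol}. The only real difficulty here is the index bookkeeping — keeping scrupulous track of which slot of $\Lambda^b_c$ is the match and which the replacement so that Lemma \ref{lem:la2} applies verbatim, and of the transpositions in $R$ and $\Th$ that conspire to produce the signs — but there is no obstruction beyond care.
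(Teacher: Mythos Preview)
Your proof is correct and follows essentially the same route as the paper: reduce $D_tA$ to the commutator $[D_t,\nabla_m]\Ph$ using $D_t\Ph=0$, apply \eqref{eq:nabladtcomm}, split the curvature factor via $\Pb+\Ph=\Id$, kill the $\Pb$-piece with Lemma~\ref{lem:la2}, and expand the $\Lambda$-action on $\Ph$ to obtain the four $\Th$-terms. The only cosmetic difference is that the paper packages the surviving $\Ph$-piece directly as $-\Th_{rpqrm}$ before applying $\Lambda^q_p$, whereas you carry the raw factor $\nabla_rR_{rmxy}\Ph_{xycb}$ through the expansion and identify each term with $\Th$ afterward; the computations are otherwise identical.
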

\begin{proof}
  Since $D_t\Ph_{ijkl} = 0$, we have 
  $D_t A_{mijkl} = D_t\nabla_m\Ph_{ijkl} = [D_t, \nabla_m]\Ph_{ijkl}$.
Thus from \eqref{eq:nabladtcomm}
we have
\[
 D_t A_{mijkl} =  R_{mr}A_{rijkl} + T_{rrmpq}\Lambda^q_p\Ph_{ijkl}.
\]
Now,
\begin{align*}
  T_{rrmpq} &= T_{rrmuv}(\Ph_{uvpq} + \Pb_{uvpq})\\
	    &= -\Th_{rpqrm} + T_{rrmuv}\Pb_{uvpq},
\end{align*}
so
\begin{align*}
  T_{rrmpq}\Lambda^q_p\Ph_{ijkl} &= -\Th_{rpqrm}\Lambda^q_p\Ph_{ijkl} + T_{rrmuv}\Pb_{uvpq}\Lambda^q_p\Ph_{ijkl}\\
  &=-\Th_{rpqrm}\Lambda^q_p\Ph_{ijkl},
\end{align*}
on account of Lemma \ref{lem:la2}, and \eqref{eq:aevol} follows.
\end{proof}
\begin{proposition}\label{prop:bevol}
Regarded as a matrix-valued function on $\OMt$, the tensor $B$ evolves according
to
\begin{align}
\label{eq:bevol}
\begin{split}
 &D_t B_{mnijkl} = R_{mr}B_{rnijkl} + R_{nr}B_{mrijkl}   + \nabla_n R_{ms} A_{sijkl}\\
		&\quad\phantom{=}  + T_{rrmsi}A_{nsjkl} + T_{rrmsj}A_{niskl} + T_{rrmsk}A_{nijsl} + T_{rrmsl}A_{nijks} \\
		&\quad\phantom{=} -\Ph_{sjkl}\nabla_m\Th_{rsirn}-\Ph_{iskl}\nabla_m\Th_{rsjrn}-\Ph_{ijsl}\nabla_m\Th_{rskrn}
			      -\Ph_{ijks}\nabla_m\Th_{rslrn}\\
		&\quad\phantom{=} +T_{rrnvw}(\Ph_{sjkl}A_{mvwsi}+\Ph_{iskl}A_{mvwsj}+\Ph_{ijsl}A_{mvwsk}+\Ph_{ijks}A_{mvwsl}).
\end{split}
\end{align}
\end{proposition}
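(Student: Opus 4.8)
The plan is to obtain \eqref{eq:bevol} by commuting $D_t$ past the two horizontal derivatives defining $B_{mnijkl} = \nabla_m\nabla_n\Ph_{ijkl}$. Since $D_t\Ph \equiv 0$ by \eqref{eq:dtprojvanish}, the entire right-hand side comes from the time-dependence of the Levi-Civita connection, which is exactly what the commutators $[D_t,\nabla_a]$ of Lemma \ref{lem:commutator} record. Concretely I would write, using Proposition \ref{prop:aevol} for the inner derivative,
\[
D_t B_{mnijkl} = [D_t,\nabla_m]A_{nijkl} + \nabla_m\bigl(D_t A_{nijkl}\bigr),
\]
and expand each piece separately.

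First I would expand $[D_t,\nabla_m]A_{nijkl}$ using \eqref{eq:nabladtcomm}. The term $R_{mr}\nabla_r A_{nijkl}$ is $R_{mr}B_{rnijkl}$; the vertical term $T_{rrmpq}\Lambda^q_p A_{nijkl}$ acts algebraically on the five indices of $A$, contributing one term in which the $n$-slot is replaced (a ``Ricci-transport'' term, to be combined below) and the four terms $T_{rrmsi}A_{nsjkl} + T_{rrmsj}A_{niskl} + T_{rrmsk}A_{nijsl} + T_{rrmsl}A_{nijks}$ of the second line of \eqref{eq:bevol}. Next I would apply the Leibniz rule to $\nabla_m$ of the right-hand side of \eqref{eq:aevol}: differentiating $R_{nr}A_{rijkl}$ produces $R_{nr}B_{mrijkl}$ together with a term of the shape $\nabla R\ast A$, while differentiating each of the four products $\Ph\,\Th$ produces a term $\Ph\ast\nabla\Th$ — which assembles into the third line of \eqref{eq:bevol}, once one notes that $\nabla\Th = \nabla\bigl(\nabla\Rm\circ(\Id\times\Ph)\bigr)$ repackages the combination $\nabla\nabla\Rm\ast\Ph + \nabla\Rm\ast\nabla\Ph$ exactly — together with a term $A\ast\Th$.

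It then remains to reorganize the surviving pieces into the forms appearing in \eqref{eq:bevol}. For the $\nabla R\ast A$ terms I would combine the ``Ricci-transport'' term with the term $\nabla_m R_{nr}A_{rijkl}$ and apply the contracted second Bianchi identity to produce the single term $\nabla_n R_{ms}A_{sijkl}$. The more delicate point is the reduction of the four $A\ast\Th$ terms: here I would unpack $\Th = \nabla\Rm\circ(\Id\times\Ph)$ via \eqref{eq:nonsym2}, use a Bianchi identity on the $\nabla\Rm$ factor, insert $\Id = \Pb + \Ph$ in the contracted curvature index and discard the resulting $\Pb$-contribution by Lemma \ref{lem:la2} together with part (1) of Lemma \ref{lem:vb}, and finally reshuffle the surviving projections (using $\Ph^2 = \Ph$, hence $A\Ph + \Ph A = A$) into the form $T_{rrnvw}\bigl(\Ph_{sjkl}A_{mvwsi} + \Ph_{iskl}A_{mvwsj} + \Ph_{ijsl}A_{mvwsk} + \Ph_{ijks}A_{mvwsl}\bigr)$ of the last line of \eqref{eq:bevol}.

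The main obstacle is precisely this last reorganization: keeping track of the many index contractions while repeatedly invoking the algebraic relations among $\Pb$, $\Ph$, the bracket \eqref{eq:bracketdef}, and the first and second Bianchi identities, and being careful about the sign conventions of Section \ref{ssec:notation} (in particular the extra minus sign in $\Rm(\omega)_{cd} = -R_{abcd}\omega_{ab}$). In the course of it one also verifies the ``closedness'' anticipated in the heuristic discussion of Section \ref{sec:pdeode}: the only second derivatives of curvature that arise occur inside $\nabla\Th$, and hence are controlled as $\nabla(\nabla\Rm)$ rather than as a new second-order curvature quantity, and no uncontrolled third derivative of $\Ph$ is produced — this last point being exactly where $D_t\Ph \equiv 0$ is used, since it forces each commutator with $D_t$ to cost only a single factor of $\nabla\Rm$.
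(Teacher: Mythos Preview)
Your overall strategy --- write $D_tB = [D_t,\nabla_m]A + \nabla_m(D_tA)$ and use \eqref{eq:nabladtcomm} --- is the same decomposition the paper uses, since $D_tA = [D_t,\nabla_n]\Ph$. The organizational difference is that you feed in the \emph{already simplified} formula \eqref{eq:aevol} for $D_tA$, whereas the paper keeps the raw expression $[D_t,\nabla_n]\Ph = R_{nr}A_{rijkl} + T_{rrnsu}\Lambda^u_s\Ph_{ijkl}$ and only invokes Lemma~\ref{lem:la2} \emph{after} applying $\nabla_m$. That choice matters more than you indicate.

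Concretely, your claim that the single ``Ricci-transport'' term $T_{rrmsn}A_{sijkl}$ (from $\Lambda^q_p$ hitting the $n$-slot of $A$) combines with $\nabla_mR_{nr}\,A_{rijkl}$ via contracted Bianchi to yield $\nabla_nR_{ms}\,A_{sijkl}$ is not correct: the paper's reduction \eqref{eq:bevolp3} uses \emph{three} terms, $\nabla_mR_{ns} + T_{rrnms} + T_{rrmsn}$, and the middle one $T_{rrnms}A_{sijkl}$ arises in the paper from the operator commutator $[\nabla_m,\Lambda^u_s]$ inside $\nabla_m(T_{rrnsu}\Lambda^u_s)$. In your function-level approach that commutator never appears explicitly; the missing piece is instead buried inside your four $A\ast\Th$ terms, which therefore do \emph{not} reduce cleanly to the fourth line of \eqref{eq:bevol} alone. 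Similarly, the paper obtains that fourth line directly from $\nabla_m\Pb = -A$ in the decomposition $\nabla_mT_{rrnsu} = -\nabla_m\Th_{rsurn} - T_{rrnvw}A_{mvwsu} + (\nabla_mT)\Pb$, the last piece dying by Lemma~\ref{lem:la2}; your proposed route through Bianchi, Lemma~\ref{lem:vb}(1), and $A\Ph+\Ph A = A$ is considerably more circuitous and you have not shown it actually closes. The fix is simple: don't substitute \eqref{eq:aevol} --- carry the unsimplified $T_{rrnsu}\Lambda^u_s\Ph$ through $\nabla_m$ first, then decompose.
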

\begin{proof}
As before, $D_t B_{mnijkl} = [D_t, \nabla_m\nabla_n] \Ph_{ijkl}$. We compute this commutator
using a double application of \eqref{eq:nabladtcomm}: 
\begin{align*}
  [D_t, \nabla_m\nabla_n] & = [D_t, \nabla_m]\nabla_n + \nabla_m [D_t, \nabla_n]\\
    &=\left(R_{mr}\nabla_r + T_{rrmsu}\Lambda^u_s\right)\nabla_n 
      +\nabla_m\left(R_{nr}\nabla_r + T_{rrnsu}\Lambda^u_s\right)\\
\begin{split}
    &= R_{mr}\nabla_r\nabla_n + T_{rrmsu}\Lambda^u_s\nabla_n
      + \nabla_mR_{nr}\nabla_r + R_{nr}\nabla_m\nabla_r\\
    &\phantom{=} + \nabla_m T_{rrnsu}\Lambda^u_s + T_{rrnsu}\nabla_m\Lambda^u_s.
\end{split}
\end{align*}
Then, using \eqref{eq:lambdanablacomm}, we have
$T_{rrnsu}\nabla_m\Lambda^u_s = -T_{rrnsm}\nabla_s = T_{rrnms}\nabla_s$,
so
\begin{align}\label{eq:bevolp1}
\begin{split}
 D_t B_{mnijkl} &= R_{mr}B_{rnijkl} + R_{nr}B_{mrijkl} +  \nabla_m T_{rrnsu}\Lambda^u_s\Ph_{ijkl}\\
		&\phantom{=}  + (\nabla_m R_{ns} + T_{rrnms})A_{sijkl}+ T_{rrmsu}\Lambda^u_sA_{nijkl}. 
\end{split}
\end{align}

Now, since $\Pb + \Ph = \Id$, we have $A = \nabla\Ph = -\nabla\Pb$;
applying this and considering the decomposition of $T$ into components as above,
we compute
\begin{align*}
 \nabla_m T_{rrnsu} &= \nabla_m( -\Th_{rsurn} + T_{rrnvw}\Pb_{vwsu})\\
		    &= - \nabla_m\Th_{rsurn}  - T_{rrnvw}A_{mvwsu} + \nabla_mT_{rrnvw}\Pb_{vwsu}.\\
\end{align*}
Using Lemma \ref{lem:la2} again, we therefore have
\begin{equation}\label{eq:bevolp2}
 \nabla_m T_{rrnsu}\Lambda^u_s\Ph_{ijkl} = - (\nabla_m\Th_{rsurn} + T_{rrnvw}A_{mvwsu})\Lambda^u_s\Ph_{ijkl}.
\end{equation}

Finally, we can simplify the last line of \eqref{eq:bevolp1}.  The last term is
\begin{align*}\begin{split}
  T_{rrmsu}\Lambda^u_sA_{nijkl} &=
    T_{rrmsn}A_{sijkl} + T_{rrmsi}A_{nsjkl} + T_{rrmsj}A_{niskl}\\
    &\phantom{=} + T_{rrmsk}A_{nijsl} + T_{rrmsl}A_{nijks}, 
\end{split}
\end{align*}
and
\begin{align*}
  \nabla_m R_{ns} + T_{rrnms} + T_{rrmsn} &= \nabla_m R_{ns} + (\nabla_s R_{mn} - \nabla_mR_{sn}) +
	  (\nabla_n R_{sm} - \nabla_s R_{nm})\\
  &= \nabla_{n} R_{ms},
\end{align*}
so the last line reduces to 
\begin{align}\label{eq:bevolp3}
\begin{split}
  &(\nabla_m R_{ns} + T_{rrnms})A_{sijkl}+ T_{rrmsu}\Lambda^u_sA_{nijkl}\\
  &\quad= \nabla_n R_{ms} A_{sijkl} + T_{rrmsi}A_{nsjkl} + T_{rrmsj}A_{niskl}\\
  &\quad\phantom{=} + T_{rrmsk}A_{nijsl} + T_{rrmsl}A_{nijks}.
\end{split}
\end{align}
Combining \eqref{eq:bevolp1}, \eqref{eq:bevolp2}, and\eqref{eq:bevolp3}, we then obtain
\eqref{eq:bevol}.
\end{proof}

\subsection{Evolution equations for $R$ and $T$.}

Recall that for $A$, $B\in \End(\WMp)$, one can form the product $A\#B \in \End(\WMp)\cong \WMp^*\otimes\WMp^*$
defined by
\[
  A\#B(\omega) \defn \frac{1}{2}\sum_{M,N} \left\langle [A(\varphi^{M}), B(\varphi^{N})], \omega \right\rangle 
			    \cdot [\varphi^{M}, \varphi^{N}]
\]
where $\{\varphi_{\alpha}\}$ is an orthonormal basis for $\WMp$.
This product is bilinear and symmetric in its arguments, and with it, we define the square $A^{\#} \defn A\#A$.  
In terms of the structure constants $[\varphi^M, \varphi^N] = C^{MN}_P\varphi^{P}$ 
(and regarded as an element of $\WMp^*\otimes\WMp^*$),
we have $(A\#B)_{IJ} =(1/2)A_{MP}B_{NQ}C^{PQ}_IC^{MN}_J$.

Now define 
\[
 \Qc: \End(\WM)\to \End(\WM)
\]
by
\begin{equation}\label{eq:qcdef}
  \Qc(A) \defn A^2 + A^{\#}
\end{equation}
and
\[
\Sc: \End(\WM)\times (TM^*\otimes\End(\WM))  \to TM^*\otimes \End(\WM)
\]
by
\begin{equation}\label{eq:scdef}
  \Sc(A, F)(X, \cdot) \defn A\circ (F\lrcorner X) + (F\lrcorner X)\circ A + 2 (F\lrcorner X) \# A. 
\end{equation}

These operators arise as reaction terms in the evolution equations for $R$ and $T$.

\begin{proposition}
  Viewed as matrix-valued functions on $\OMt$, the tensors $R$ and $T$ evolve
  according to
\begin{align}\label{eq:revol}
(D_t - \Delta)R_{ijkl} &= -\Qc(\Rm)_{ijkl}, 
\end{align}
and
\begin{align}\label{eq:tevol}
(D_t - \Delta)T_{mijkl} &= 2R_{mb}T_{bijkl}+ 2R_{mbdp}\Lambda^p_dT_{bijkl}  - \Sc(\Rm, \nabla \Rm)_{mijkl}.
\end{align}
\end{proposition}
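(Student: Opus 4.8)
\emph{Strategy.} The plan is to deduce \eqref{eq:revol} from the familiar evolution equation for the curvature operator under \eqref{eq:rf}, and then obtain \eqref{eq:tevol} by differentiating \eqref{eq:revol} in a horizontal direction.

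For \eqref{eq:revol}: in a fixed local coordinate frame, Hamilton's computation shows that under \eqref{eq:rf} the curvature tensor satisfies $\partial_t R_{ijkl} = \Delta R_{ijkl} + (\text{quadratic in }\Rm)$, where the quadratic part splits into the ``$\Rm^2+\Rm^{\#}$'' reaction term and a collection of $\Rc\ast\Rm$ terms of the form $R_{ip}R_{pjkl}$ (and its analogues in the remaining indices). Passing to $\OMt$ and replacing $\partial_t$ by $D_t=\partial_t+R_{ac}\Lambda^a_c$ amounts to differentiating $\Rm$ along a frame evolved so as to stay $g(t)$-orthonormal (Uhlenbeck's trick); since $\Lambda^a_c$ acts algebraically by index substitution, the correction $R_{ac}\Lambda^a_c R_{ijkl}$ is precisely $R_{ip}R_{pjkl}+R_{jp}R_{ipkl}+R_{kp}R_{ijpl}+R_{lp}R_{ijkp}$, which cancels the $\Rc\ast\Rm$ terms above. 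What remains is the reaction term $\Rm^2+\Rm^{\#}$, and, once the sign convention $\Rm(\omega)_{cd}=-R_{abcd}\omega_{ab}$ of Section \ref{ssec:notation}, together with the normalizations of the bracket \eqref{eq:bracketdef} and of $\#$, is reconciled with the conventions underlying Hamilton's identity, this is exactly $-\Qc(\Rm)_{ijkl}$. I would either carry this out directly on the frame bundle or, since we have adopted the conventions of \cite{RFV2P2} throughout, simply point to the corresponding computation there after checking the normalization of $\Qc$.

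For \eqref{eq:tevol}: apply the horizontal vector field $\nabla_m$ to \eqref{eq:revol} and use $(D_t-\Delta)T_{mijkl}=(D_t-\Delta)\nabla_m R_{ijkl}=\nabla_m\big((D_t-\Delta)R_{ijkl}\big)+[D_t-\Delta,\nabla_m]R_{ijkl}$. The commutator term is supplied by \eqref{eq:heatnablacomm} of Lemma \ref{lem:commutator}: since $\nabla_b R_{ijkl}=T_{bijkl}$, it equals $2R_{mbdp}\Lambda^p_d T_{bijkl}+2R_{mb}T_{bijkl}$. For the remaining term, note that $\Qc(A)=A^2+A^{\#}$ is homogeneous quadratic in $A$ and that $\Sc$ is nothing but its directional derivative: $\Sc(A,F)(X,\cdot)$ is the derivative at $s=0$ of $\Qc\big(A+s(F\lrcorner X)\big)$, as one sees from $\Qc(A+sB)=A^2+A^{\#}+s(AB+BA+2A\#B)+s^2\Qc(B)$. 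Since the connection is compatible with the inner product defining $\#$, differentiating gives $\nabla_m(\Qc(\Rm)_{ijkl})=\Sc(\Rm,\nabla\Rm)_{mijkl}$ with $\nabla\Rm\lrcorner e_m=\nabla_m\Rm$, hence $\nabla_m(-\Qc(\Rm)_{ijkl})=-\Sc(\Rm,\nabla\Rm)_{mijkl}$. Adding the two contributions yields \eqref{eq:tevol}.

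\emph{Main obstacle.} The genuine content, and the step I expect to require the most care, is \eqref{eq:revol}: one must confirm that the $\Lambda$-correction built into $D_t$ annihilates exactly the non-$\Qc$ curvature terms in Hamilton's equation and that the surviving term carries the correct overall sign under the conventions of Section \ref{ssec:notation}. Once \eqref{eq:revol} is in hand, \eqref{eq:tevol} is essentially bookkeeping --- recognizing $\Sc$ as the derivative of $\Qc$ and invoking \eqref{eq:heatnablacomm} --- with the only subtlety being the placement of the algebraic operator $\Lambda^p_d$ and the index manipulations forced by the asymmetry \eqref{eq:nonsym1}--\eqref{eq:nonsym2} of $T$.
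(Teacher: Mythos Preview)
Your proposal is correct and mirrors the paper's argument almost exactly: the paper declares \eqref{eq:revol} ``standard'' (your Uhlenbeck-trick explanation is precisely the reason), and for \eqref{eq:tevol} it writes $(D_t-\Delta)T_{mijkl}=[D_t-\Delta,\nabla_m]R_{ijkl}+\nabla_m(D_t-\Delta)R_{ijkl}$, invokes \eqref{eq:heatnablacomm} for the commutator, and identifies $\nabla_m(\Qc(\Rm))$ with $\Sc(\Rm,\nabla\Rm)(e_m,\cdot)$ via the symmetry of $\#$---exactly your observation that $\Sc$ is the linearization of $\Qc$.
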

\begin{proof}
Equation \eqref{eq:revol} is standard.  For \eqref{eq:tevol}, we use \eqref{eq:revol} and \eqref{eq:heatnablacomm}:
\begin{align*}
  (D_t - \Delta) T_{mijkl} &= [D_t - \Delta, \nabla_m]R_{ijkl} + \nabla_m(D_t - \Delta)R_{ijkl}\\
			   &= 2\left(R_{mb}\nabla_b + R_{mbdp}\Lambda^p_d\nabla_b\right)R_{ijkl}
			   - \nabla_m(\Qc(\Rm))\\
			   &= 2R_{mb}T_{bijkl} +2R_{mbdp}\Lambda^p_dT_{bijkl} 
			      - \nabla_m(\Qc(\Rm)).
\end{align*}
For the last term, note that at any $p\in M$, 
\begin{align*}
  \nabla (\Qc(\Rm))(e_m, \cdot) &= \nabla_m\Rm \circ \Rm + \Rm\circ\nabla_m \Rm 
      + \Rm\#\nabla_m \Rm\\
			      &= \Sc(\Rm,\nabla\Rm)(e_m, \cdot)
\end{align*}
in view of the symmetry of the product $\#$.
\end{proof}

\begin{remark}
 We choose to leave the terms $2R_{mb}T_{bijkl} +2R_{mbdp}\Lambda^p_dT_{bijkl}$ in \eqref{eq:tevol} in a rather raw form
 for convenience in a later computation, however, we might alternatively have written
\begin{align*}\begin{split}
  R_{mb}T_{bijkl} +R_{mbdp}\Lambda^p_dT_{bijkl} &= R_{mbdi}T_{bdjkl} + R_{mbdj}T_{bidkl}+ R_{mbdk}T_{bijdl} + R_{mbdl}T_{bijkd}
  \end{split}\\
	    &\defn\Uc(\Rm, \nabla\Rm)_{mijkl}
\end{align*}
where
\[
 \Uc: \End(\WM)\times (TM^*\otimes\End(\WM))  \to TM^*\otimes \WM\otimes\WM
\]
is given by
\[
  \Uc(A, F)(X, \omega, \eta) = \sum_{i =1}^n\left(\left\langle\left[A(e_i\wedge X), F(e_i, \omega) \right], \eta\right\rangle
			   + \left\langle\left[A(e_i\wedge X), F(e_i, \eta)\right], \omega\right\rangle \right)
\]
in the fiber over $p$ for $\{e_i\}$ an orthonormal basis of $T_pM$.
Alternatively, using the second Bianchi identity and the symmetries of $T$, one can define the tensor 
$C_{mijkl} \defn -T_{mipqj}R_{kpql}$ (analogous to Hamilton's $B_{ijkl} = - R_{ipqj}R_{kpql}$) and write the evolution
of $T$ in the form
\begin{align*}
\begin{split}
  &(D_t - \Delta) T_{mijkl}\\
  &\quad=2(C_{mijkl} + C_{mklij} - C_{mijlk} - C_{mlkij} + C_{mikjl} +  C_{mjlik} - C_{miljk} - C_{mjkil})\\
    &\quad\phantom{=} + 2(C_{kljmi} -  C_{lkjmi} + C_{lkimj} - C_{klimj} + C_{ijlmk} -  C_{jilmk} + C_{jikml} - C_{ijkml}). 
\end{split}
\end{align*}
If $\mathcal{P}$ denotes the projection $T_4(M)\to\WM\otimes_S\WM$ (where $\otimes_S$ denotes
the symmetric tensor product), that is,
\[
  \mathcal{P}(V)_{ijkl} = \frac{1}{8}(V_{ijkl} - V_{jikl} - V_{ijlk} + V_{jikl} + V_{klij} - V_{lkij} - V_{klji} + V_{lkji}),
\]
then the sum in parantheses on the last line in the expression above (which corresponds to $\Uc(\Rm, \nabla\Rm)$) is 
$-8\cdot\mathcal{P}(C\lrcorner_4 e_m)$
where $\lrcorner_i X$ denotes inner multiplication by $X$ in the $i$-th argument.
\end{remark}

\subsection{Evolution equations for $\Rh$ and $\Th$.}

Using the results of the preceding section, we now compute the evolutions of the components of the parabolic portion
of our PDE-ODE system. We begin with the consideration of the reaction terms $\Qc(\Rm)$ and $\Sc(\Rm, \nabla\Rm)$.

\begin{lemma}\label{lem:rxnterms}
 Denote temporarily $R = \Rm$, $T = \nabla\Rm$. At any $p \in M$, 
\begin{equation}\label{eq:qcomp}
 \Qc(R) \circ \Ph = R\circ \Rh + \Rb^*\#\Rh^* + \Rm\#\Rh^* 
\end{equation}
and 
\begin{equation}\label{eq:scomp}
  (\Sc(R, T)\lrcorner X)\circ \Ph =  R\circ \Th_X + T_X\circ \Rh + (\Th_X^*\#\Rb^* + T_X\#\Rh^*)\circ\Ph   
\end{equation}
for any $X \in T_pM$, where we use the shorthand 
\[
T_X\defn T\lrcorner X \quad \Th_X\defn \Th \lrcorner X\in \End(\WMp),
\]
and denote the adjoint of an operator $A\in \End(\WMp)$ by $A^*$. 
\end{lemma}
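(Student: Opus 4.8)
The plan is to prove both identities \eqref{eq:qcomp} and \eqref{eq:scomp} by the same routine: unwind the definitions of $\Qc$ and $\Sc$, handle the pure composition terms for free, and then reduce the terms built from Hamilton's product $\#$ by decomposing the self-adjoint operators $R=\Rm$ and $T_X = (\nabla_X\Rm)$ in the way adapted to the splitting $\WMp = H(t)\oplus K(t)$ and invoking the bracket-inclusion relations of Lemmas \ref{lem:la} and \ref{lem:la2}.

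First I would write $\Qc(R)\circ\Ph = R\circ(R\circ\Ph) + (R\#R)\circ\Ph$ and, likewise, $(\Sc(R, T)\lrcorner X)\circ\Ph = R\circ(T_X\circ\Ph) + T_X\circ(R\circ\Ph) + 2(T_X\#R)\circ\Ph$. Since $R\circ\Ph = \Rh$ and $T_X\circ\Ph = \Th_X$ by definition, the non-$\#$ terms are exactly $R\circ\Rh$, respectively $R\circ\Th_X + T_X\circ\Rh$, which account for the first contributions on the right-hand sides. Everything therefore reduces to understanding $(R\#R)\circ\Ph$ and $(T_X\#R)\circ\Ph$.

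For these, the key point is that, since $R$ and $T_X$ are self-adjoint and $\Pb+\Ph = \Id$, we have the decompositions $R = \Rb^* + \Rh^*$ and $T_X = \Tb_X^* + \Th_X^*$ in which $\Rb^* = \Pb\circ\Rm$ and $\Tb_X^* = \Pb\circ(\nabla_X\Rm)$ take values in $H(t)$, while $\Rh^* = \Ph\circ\Rm$ and $\Th_X^* = \Ph\circ(\nabla_X\Rm)$ take values in $K(t)$. By bilinearity and symmetry of $\#$, the products $(R\#R)\circ\Ph$ and $(T_X\#R)\circ\Ph$ expand into sums of terms $(P\#Q)\circ\Ph$ with $P, Q\in\{\Rb^*,\Rh^*,\Tb_X^*,\Th_X^*\}$, and from the definition of $\#$ together with self-adjointness of $\Ph$, the operator $(P\#Q)\circ\Ph$ sends $\omega$ to a combination of the $[\varphi^M,\varphi^N]$ with coefficients $\langle\Ph[P(\varphi^M), Q(\varphi^N)], \omega\rangle$. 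Thus, when both $P$ and $Q$ take values in $H(t)$ the bracket lies in $[H(t),H(t)]\subseteq H(t)$ by Lemma \ref{lem:la2} (equivalently \eqref{eq:tvan}) and the term vanishes; when exactly one of them does, the bracket lies in $[H(t),K(t)]\subseteq K(t)$ by Lemma \ref{lem:la} and the composition with $\Ph$ may be dropped, so $(P\#Q)\circ\Ph = P\#Q$; and when both take values in $K(t)$ one retains the composition with $\Ph$, as $[K(t),K(t)]$ need not lie in $K(t)$. Reassembling the surviving terms --- using the full antisymmetry of $(X,Y,Z)\mapsto\langle[X,Y],Z\rangle$ to merge the two ``mixed'' contributions and then recombining $\Rb^* + \Rh^* = R$ and $\Tb_X^* + \Th_X^* = T_X$ --- yields \eqref{eq:qcomp} and \eqref{eq:scomp}. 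I expect the main obstacle to be bookkeeping: tracking exactly which of $\Pb$, $\Ph$, or the identity decorates each slot of each $\#$-product after the reductions, with the $K(t)$--$K(t)$ bracket terms (which admit no further simplification) requiring the most care, and then verifying that the leftover combination is precisely the claimed one in terms of $\Rb^*\#\Rh^*$, $\Rm\#\Rh^*$, $\Th_X^*\#\Rb^*$, and $T_X\#\Rh^*$.
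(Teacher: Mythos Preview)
Your approach is correct and essentially identical to the paper's: decompose $R = \Rb^* + \Rh^*$ (and $T_X$ analogously), expand by bilinearity and symmetry of $\#$, eliminate the $H$--$H$ term via $\mathcal{T}[\Pb,\Pb,\Ph] = 0$ from \eqref{eq:tvan}, and regroup the survivors as $\Rb^*\#\Rh^* + R\#\Rh^*$ (respectively $\Th_X^*\#\Rb^* + T_X\#\Rh^*$). Your additional observation that the outer $\circ\Ph$ can be dropped on the mixed $H$--$K$ terms is correct but not needed---the paper's proof simply leaves all surviving $\#$-terms composed with $\Ph$.
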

\begin{remark}  As we observed in \eqref{eq:nonsym1} and \eqref{eq:nonsym2}, the operators $\Rb$, $\Rh$, $\Tb$, $\Th$ are no longer 
self-adjoint.  However, $\Rb^* = \Pb \circ R$, $\Rh^* = \Ph\circ R$, $\Tb^*_X = \Pb\circ T_X$, $\Th_X^* = \Ph\circ T_X$.  In coordinates,
for example, $\Rb^*_{ijkl} = \Pb_{abkl}R_{ijba}$, and similarly for the others.
\end{remark}
\begin{proof}
  The origin of the first term in \eqref{eq:qcomp} is clear.  For the second, we use $\Pb +\Ph = \Id$ and expand
to find
\begin{align*}
    R\#R &= \left((\Ph + \Pb)\circ R\right) \# \left((\Ph + \Pb)\circ R\right)\\
	 &=
	  (\Pb\circ R)\# (\Pb\circ R) + (\Ph\circ R)\# (\Pb\circ R) 
		+ (\Pb\circ R)\# (\Ph\circ R) + (\Ph\circ R)\# (\Ph\circ R)\\
	 &=  \Rb^*\# \Rb^* + 2\Rb^*\# \Rh^* + \Rh^*\#\Rh^*.
\end{align*}
We claim $(\Rb^*\# \Rb^*)\circ\Ph \equiv 0$.  To see this, let $p\in M$ and $\{\varphi^A\}$ be an orthogonal
basis for $\wedge^2(T_pM)$. Then for any $1\leq N \leq n(n-1)/2$, we have
\begin{align*}
  (\Rb^*\# \Rb^*)(\Ph(\varphi^N)) &=
      \frac{1}{2}\sum_{A, B}\left\langle \left[\Pb(R(\varphi^A)), \Pb(R(\varphi^B))\right], \Ph(\varphi^N)\right\rangle
	\cdot[\varphi^A, \varphi^B]\\
      &= \frac{1}{2}\sum_{A, B}\mathcal{T}[\Pb, \Pb, \Ph](R(\varphi^A), R(\varphi^B), \varphi^N)\cdot [\varphi^A, \varphi^B]
\end{align*}
which vanishes by Lemma \ref{lem:vb}. Thus  
\begin{align*}
  (R^2 + R^\#)\circ\Ph &=  R\circ\Rh + (2\Rb^*\# \Rh^* + \Rh^*\#\Rh^*)\circ\Ph\\
		       &= R\circ\Rh + (\Rb^*\# \Rh^* + R\#\Rh^*)\circ\Ph
\end{align*}

We argue similarly for \eqref{eq:scomp}. Again the first two terms are clear, and for the term involving the Lie-algebraic product,
we expand into components relative to the decomposition $\WMp = H_p \oplus K_p$:
\begin{align*}
  T_X\# R &= (\Tb_X^* + \Th_X^*)\#(\Rb^*+ \Rh^*)\\
	  &=  \Tb_X^* \#\Rb^* + \Th_X^* \#\Rb^* + T_X \#\Rh^*.
\end{align*}
Then, just as before, 
\[
  (\Tb_X^*\#\Rb^*)\circ\Ph(\varphi^N) = 
  \frac{1}{2}\sum_{A, B}\mathcal{T}[\Pb, \Pb, \Ph](T(X, \varphi^A), R(\varphi^B), \varphi^N)\cdot[\varphi^A, \varphi^B],
\]
and so is zero for all $N$ by Lemma \ref{lem:vb}. 
\end{proof}
\begin{remark}
  As functions on the frame bundle, we have
\begin{align}\label{eq:qpframe}
\begin{split}
  (\Qc(\Rm)\circ\Ph)_{ijkl} &= R_{cdlk}\Rh_{ijcd} 
      + 2\Ph_{ijce}(\Rb^*_{cpql}\Rh_{epqk} - \Rb^*_{cpqk}\Rh_{epql})\\
      &\phantom{=}   + 2\Ph_{ijce}(R_{cpkq}\Rh^*_{epql} -R_{cplq}\Rh^*_{epqk}),
\end{split}
\end{align}
and
\begin{align}\label{eq:spframe}
\begin{split}
  (\Sc(\Rm, \nabla T)\circ(\Id\times\Ph))_{mijkl} &= R_{ablk} \Th_{mijab} +T_{mablk}\Rh_{ijab} \\
  &\phantom{=} + 2\Ph_{ijce}(\Rb^*_{cpql}\Th^*_{mepqk} - \Rb^*_{cpqk}\Th_{mepql})\\
      &\phantom{=}   + 2\Ph_{ijce}(T_{mcpkq}\Rh^*_{epql} -T_{mcplq}\Rh^*_{epqk}).
\end{split} 
\end{align}

\end{remark}

\begin{proposition}\label{prop:rhatevol}
The tensor $\Rh$, regarded as a matrix-valued function on $\OMt$, evolves according
to
\begin{align}\label{eq:rhatevol}
    (D_t - \Delta)\Rh_{ijkl} &= 2A_{pijab}T_{pabkl} + B_{ppijab}R_{abkl} + (\Qc(\Rm)\circ\Ph)_{ijkl}.
\end{align}
\end{proposition}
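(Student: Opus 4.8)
The plan is to obtain \eqref{eq:rhatevol} by a direct computation: differentiate the defining identity
\[
\Rh_{ijkl} = \Ph_{ijab}R_{ablk},
\]
valid as an identity of functions on $\OMt$ (the contraction over $a$, $b$ being with respect to an orthonormal frame), using the Leibniz rule for the heat operator, and then substituting the evolution equations already recorded for the two factors. Since $\Rh$ is built algebraically from $\Rm$ and $\Ph$ — with no derivatives falling on either factor — this computation requires no commutator identities beyond the elementary product rule, in contrast to those for $A$, $B$, and $\Th$.

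First I would record that, because $D_t$ is a derivation on functions on $\FMt$ (a sum of $\partial_t$ with function-multiples of the vertical fields $\Lambda^a_c$) and $\Delta = \nabla_p\nabla_p$ is built from the horizontal derivations $\nabla_p$, one has for any pair of (tensor-valued) functions $f$, $g$ on $\OMt$
\[
(D_t - \Delta)(fg) = \bigl((D_t - \Delta)f\bigr)g + f\bigl((D_t - \Delta)g\bigr) - 2(\nabla_pf)(\nabla_pg).
\]
Applying this with $f = \Ph_{ijab}$, $g = R_{ablk}$ (summing over $a$, $b$ termwise) and recalling $A = \nabla\Ph$, $B = \nabla\nabla\Ph$ gives
\[
(D_t - \Delta)\Rh_{ijkl} = \bigl((D_t - \Delta)\Ph_{ijab}\bigr)R_{ablk} + \Ph_{ijab}\bigl((D_t - \Delta)R_{ablk}\bigr) - 2A_{pijab}T_{pablk}.
\]
Now $D_t \Ph \equiv 0$ by \eqref{eq:dtprojvanish}, so $(D_t - \Delta)\Ph_{ijab} = -\Delta\Ph_{ijab} = -B_{ppijab}$; and $(D_t - \Delta)R_{ablk} = -\Qc(\Rm)_{ablk}$ by \eqref{eq:revol}. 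Substituting,
\[
(D_t - \Delta)\Rh_{ijkl} = -B_{ppijab}R_{ablk} - \Ph_{ijab}\Qc(\Rm)_{ablk} - 2A_{pijab}T_{pablk}.
\]

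The remaining step is pure bookkeeping. Using the antisymmetry of $R$, of $T = \nabla R$, and of $\Qc(\Rm)$ (which carries the symmetries of an algebraic curvature tensor) under interchange of their final two indices, each term picks up a sign when the pair $lk$ is rewritten as $kl$; combined with the composition convention relating $\Ph_{ijab}\Qc(\Rm)_{abkl}$ to $(\Qc(\Rm)\circ\Ph)_{ijkl}$, the right-hand side rearranges into $2A_{pijab}T_{pabkl} + B_{ppijab}R_{abkl} + (\Qc(\Rm)\circ\Ph)_{ijkl}$, which is \eqref{eq:rhatevol}. I do not expect a substantive difficulty here: no new geometric input enters and no commutators are needed. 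The one point requiring care — and the only real ``obstacle'' — is consistently tracking the two index conventions in force (the extra minus sign in $\Rm(\omega)_{cd} = -R_{abcd}\omega_{ab}$ versus the plain convention for $\Pb$ and $\Ph$), and confirming that the ``unmatched'' second-derivative contribution organizes itself precisely as $B_{ppijab}R_{abkl}$, with no additional contractions — this being the term that forces $B$ into the PDE–ODE system.
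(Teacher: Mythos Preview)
Your proposal is correct and follows essentially the same route as the paper: apply the Leibniz rule for $D_t - \Delta$ to $\Rh = \Rm\circ\Ph$, substitute $D_t\Ph = 0$ and $(D_t - \Delta)R = -\Qc(\Rm)$, and then unwind the index and sign conventions. The paper carries out the same computation half in operator notation and half in indices, but there is no substantive difference.
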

\begin{proof}
We have 
\begin{align*}
  (D_t - \Delta)\Rh &= \Rm\circ(D_t- \Delta)\Ph - 2\nabla_p\Rm\circ\nabla_p\Ph  + (D_t - \Delta)\Rm \circ\Ph \\
		    &= -\Rm\circ\Delta\Ph - 2\nabla_p\Rm\circ\nabla_p\Ph + \Qc(\Rm)\circ\Ph,
\end{align*}
using \eqref{eq:dtprojvanish} and \eqref{eq:revol}.  Then
\[
-(\Rm\circ\Delta\Ph)_{ijkl} = -\Delta\Ph_{ijab}R_{ablk} = B_{ppijab}R_{abkl},
\]
and
\[
  - 2(\nabla_p\Rm\circ\nabla_p\Ph)_{ijkl} = -2\nabla_p\Ph_{ijab}\nabla_pR_{ablk} = 2A_{pijab}T_{pabkl},
\]
and \eqref{eq:rhatevol} follows.
\end{proof}

\begin{proposition}
The tensor $\Th$, viewed as a matrix-valued function on $\OMt$, evolves according to
\begin{align}\label{eq:thevol}
\begin{split}
    &(D_t - \Delta)\Th_{mijkl}\\
    &\quad= 2A_{pijab}\nabla_pT_{mijkl} + B_{ppijab}T_{mabkl}
      +\left(\Sc(R, T)\circ(\Id\times\Ph)\right)_{mijkl}\\
   &\quad\phantom{=} +2\left(R_{mpqi}\Th_{pqjkl}+R_{mpqj}\Th_{piqkl}
 	  + R_{mpqk}\Th_{pijql} + R_{mpql}\Th_{pijkq}\right)\\
 &\quad\phantom{=} + 2\left(\Rh_{qimp}\Th_{pqjkl} + \Rh_{qjmp}\Th_{piqkl} 
 	  + T_{pablk}\left(\Ph_{ijqb}\Rh_{qamp}+ \Ph_{ijaq}\Rh_{qbmp}\right)\right).    
\end{split}
\end{align}
\end{proposition}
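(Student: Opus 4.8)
The plan is to follow the template of the proof of Proposition~\ref{prop:rhatevol}. Write $\Th_{mijkl}=\Ph_{ijab}T_{mablk}$ as in \eqref{eq:nonsym2} (with $T=\nabla\Rm$), regard $\Ph$ and $T$ as matrix-valued functions on $\OMt$, and apply the Leibniz rule for $D_t-\Delta$. Since $D_t$ is a derivation and $\Delta=\nabla_p\nabla_p$ is second order, differentiating this product introduces no commutator corrections, and
\[
(D_t-\Delta)\Th = \Ph\ast(D_t-\Delta)T + \bigl((D_t-\Delta)\Ph\bigr)\ast T - 2(\nabla_p\Ph)\ast(\nabla_p T).
\]
The cross term gives $2A_{pijab}\nabla_pT_{mabkl}$ (after the antisymmetry $T_{mablk}=-T_{mabkl}$), which is the first term on the right of \eqref{eq:thevol}; note that the second derivatives of curvature enter only as $\nabla$ applied to $T=\nabla\Rm$. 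For the second group, $D_t\Ph\equiv0$ by \eqref{eq:dtprojvanish}, so $(D_t-\Delta)\Ph=-\Delta\Ph$, and contracting $-\Delta\Ph_{ijab}=-B_{ppijab}$ with $T$ yields $B_{ppijab}T_{mabkl}$, the second term of \eqref{eq:thevol}.

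The real work is the first group $\Ph\ast(D_t-\Delta)T$, for which I would substitute the evolution equation \eqref{eq:tevol} for $T$ (with its free indices relabelled to $(a,b,l,k)$ and its internal contractions renamed to avoid the collision) and contract with $\Ph_{ijab}$. The reaction term $-\Sc(\Rm,\nabla\Rm)$, once composed with $\Ph$, is precisely the content of Lemma~\ref{lem:rxnterms}: equation \eqref{eq:scomp} (equivalently its frame form \eqref{eq:spframe}) reassembles it into the term $\Sc(R,T)\circ(\Id\times\Ph)$ of \eqref{eq:thevol}, written entirely through $\Rh$, $\Th$, and the adjoint factors $\Rb^{*}=\Pb\circ R$, $\Th_X^{*}=\Ph\circ T_X$. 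For the two curvature-transport terms of \eqref{eq:tevol}, $2R_{mc}T_{cablk}+2R_{mcdp}\Lambda^p_dT_{cablk}$, I would first recombine them into the four-term ``$\Uc$'' shape noted in the remark following \eqref{eq:tevol}, i.e.\ (after the relabelling) into $R_{mcda}T_{cdblk}+R_{mcdb}T_{cadlk}+R_{mcdl}T_{cabdk}+R_{mcdk}T_{cabld}$, and then contract with $\Ph_{ijab}$. In the last two summands the curvature index sits on a ``$k$'' or ``$l$'' slot of $T$, so contracting with $\Ph$ reassembles them directly as $\Th$, producing $2(R_{mpqk}\Th_{pijql}+R_{mpql}\Th_{pijkq})$; in the first two summands the index $a$ (resp.\ $b$) that was a ``form'' slot of $T$ is instead contracted with the Riemann tensor, so these must be reorganized using Riemann pair symmetry and the Bianchi identities, together with the insertion $\Id=\Pb+\Ph$ and the vanishing \eqref{eq:tvan} to discard the purely-$\Pb$ pieces. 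This split produces, on one hand, the $\Th$-terms $2(R_{mpqi}\Th_{pqjkl}+R_{mpqj}\Th_{piqkl})$ and, on the other, the terms $2(\Rh_{qimp}\Th_{pqjkl}+\Rh_{qjmp}\Th_{piqkl}+T_{pablk}(\Ph_{ijqb}\Rh_{qamp}+\Ph_{ijaq}\Rh_{qbmp}))$ in which a second factor of $\Ph$, hidden inside $\Rh$, survives. Assembling the three groups yields \eqref{eq:thevol}.

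The main obstacle is this last reorganization. Unlike in Proposition~\ref{prop:rhatevol}, where the curvature reaction term was a clean operator product, here the curvature-transport terms $R_{mc}T_{cablk}+R_{mcdp}\Lambda^p_dT_{cablk}$ entangle the ``form'' slots of $\nabla\Rm$ with its curvature slots, so one cannot simply read off a $\Th$; disentangling which pieces become $\Th$, which become $\Rh$ (or $\Rb^{*}$) composed with another factor, and which vanish requires careful, repeated use of the Bianchi identities in tandem with Lemmas~\ref{lem:la} and \ref{lem:proj} (in the form of \eqref{eq:tvan}), exactly as in the proof of Lemma~\ref{lem:rxnterms}. Keeping the symmetries of $T$ and $\Ph$ straight through these manipulations is the bulk of the bookkeeping.
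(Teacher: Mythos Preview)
Your overall architecture is correct and matches the paper: apply the Leibniz rule for $D_t-\Delta$ to the product $\Ph_{ijab}T_{mablk}$, pick up the $A$- and $B$-terms from $\nabla\Ph$ and $\Delta\Ph$, and handle the $\Sc$-reaction term via Lemma~\ref{lem:rxnterms}. Where you diverge from the paper is in the treatment of the curvature--transport block $2\Ph_{ijab}(R_{mp}+R_{mpqr}\Lambda^r_q)T_{pablk}$. You propose to first expand this into the four-term $\Uc$ shape and then contract with $\Ph$, sorting the ``easy'' $k,l$-summands from the ``hard'' $a,b$-summands and cleaning the latter with pair symmetry, Bianchi, and \eqref{eq:tvan}. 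The paper instead keeps the operator $\Lambda^r_q$ intact and uses that it is a derivation to write
\[
\Ph_{ijab}\,R_{mpqr}\Lambda^r_qT_{pablk}
= R_{mpqr}\Lambda^r_q\Th_{pijkl}
- T_{pablk}\,R_{mpqr}\Lambda^r_q\Ph_{ijab},
\]
(the symmetric-in-$(q,r)$ defect in this identity is annihilated by $R_{mpqr}$). The first piece, together with the $R_{mp}\Th_{pijkl}$ term, expands directly to the four $R_{mpq\bullet}\Th$ terms on line three of \eqref{eq:thevol}. For the second piece the paper decomposes $R_{mpqr}=R_{qrmp}=\Rh_{qrpm}+\Pb_{uvqr}R_{uvmp}$ and kills the $\Pb$-component in one stroke using Lemma~\ref{lem:la2}, i.e.\ $\Pb_{uvqr}\Lambda^r_q\Ph_{ijab}=0$; expanding the surviving $\Rh$-component then yields line four. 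No Bianchi identity is invoked anywhere---only the pair symmetry $R_{mpqr}=R_{qrmp}$---and the Lie-algebraic input enters through Lemma~\ref{lem:la2} rather than \eqref{eq:tvan} directly (the two have the same content via Lemma~\ref{lem:la}, but Lemma~\ref{lem:la2} is precisely the packaged form $\Pb\cdot\Lambda\Ph=0$ that the computation needs). Your $\Uc$-first route can be made to work, but the ``hard'' $a,b$-summands do not split cleanly under a single insertion of $\Id=\Pb+\Ph$; you would effectively have to rediscover the derivation identity to reassemble them, and your appeal to Bianchi is a red herring. The paper's organization buys you exactly this: by commuting $\Ph$ past $\Lambda^r_q$ \emph{before} expanding, the four $R\Th$ terms and the four $\Rh$-terms fall out with no further reshuffling.
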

\begin{proof}
We obtain the evolution equation for $\Th$ by a computation similar to that for $\Rh$. Namely, we have
$\Th_{mijkl} = -\Ph_{ijab}T_{mabkl}$, and, as before, 
\begin{equation}\label{eq:thevol1}
  (D_t - \Delta)\Th_{mijkl} = \Ph_{ijab}(D_t - \Delta)T_{mablk} + 2A_{pijab}\nabla_pT_{mabkl} + B_{ppijab}T_{mabkl}.
\end{equation}
By \eqref{eq:tevol}, we have
\begin{align}\label{eq:thevol2}
\begin{split}
  (D_t - \Delta) T_{mablk} &=2(R_{mp} + R_{mpqr}\Lambda^r_q)T_{pablk} -\mathcal{S}(\Rm, \nabla\Rm)_{mablk}, \\
\end{split}
\end{align}
and
\[
\mathcal{S}(\Rm, \nabla\Rm)_{mablk}\Ph_{ijab} = -(\mathcal{S}(R, T) \circ(Id\times \Ph))_{mijkl},
\]
so we just need to consider the contraction of first term in \eqref{eq:thevol2} against $\Ph_{ijab}$.  

First, since $\Lambda_{q}^r$ is a derivation,
we can write
\begin{align*}
 \Ph_{ijab}\Lambda^r_qT_{mablk} = \Lambda^r_q\Th_{mijkl} - T_{mablk}\Lambda^r_q\Ph_{ijab}.
\end{align*}
Also, 
\[
 R_{mpqr} = R_{qrmp} = (\Ph_{qruv} + \Pb_{qruv})R_{uvmp} = \Rh_{qrpm} + \Pb_{uvqr}R_{uvmp},
\]
so
\begin{align*}
 \Ph_{ijab}R_{mpqr}\Lambda^r_q T_{mablk} &= R_{mpqr}\Lambda^r_q\Th_{mijkl} - T_{mablk}(\Rh_{qrpm} + \Pb_{uvqr}R_{uvmp})\Lambda^r_q\Ph_{ijab}\\
      &= R_{mpqr}\Lambda^r_q\Th_{mijkl} - T_{mablk}\Rh_{qrpm}\Lambda^r_q\Ph_{ijab}
\end{align*}
by Lemma \ref{lem:la2}.
With this, we can expand to obtain
\begin{align}\label{eq:thevol3}
\begin{split}
  &\Ph_{ijab}(R_{mp} + R_{mpqr}\Lambda^r_q)T_{pablk}\\ 
&\quad= R_{mpqi}\Th_{mqjkl} + R_{mpqj}\Th_{miqkl}+R_{mpqk}\Th_{mijkql} +R_{mpql}\Th_{mijkq}\\
&\quad\phantom{=}-T_{mablk}\left(\Ph_{qjab}\Rh_{qipm} + \Ph_{iqab}\Rh_{qjpm} 
			    + \Ph_{ijqb}\Rh_{qapm} +\Ph_{ijaq}\Rh_{qbpm}\right)\\
&\quad= R_{mpqi}\Th_{mqjkl} + R_{mpqj}\Th_{miqkl}+R_{mpqk}\Th_{mijkql} +R_{mpql}\Th_{mijkq}\\
&\quad\phantom{=}-\Th_{mqjkl}\Rh_{qipm} - \Th_{miqkl}\Rh_{qjpm} 
			    + T_{mabkl}\left(\Ph_{ijqb}\Rh_{qapm} + \Ph_{ijaq}\Rh_{qbpm}\right).
\end{split}
\end{align}
Equations \eqref{eq:thevol1}, \eqref{eq:thevol2}, and \eqref{eq:thevol3} then combine 
to yield \eqref{eq:thevol}.
\end{proof}

\begin{remark}
  For the sequel, we observe that the quantities $A$, $B$, $\Rh$, and $\Th$ satisfy the following schematic
  equations:
\begin{align}
\label{eq:schemabevol}
  D_t A &= R\ast A + \Ph\ast\Th,\\
\label{eq:schembbevol}
\begin{split}
  D_t B &= R\ast B + T\ast A + \Ph\ast\nabla\Th + \Ph\ast T\ast A,
\end{split}\\
\label{eq:schemrhevol}
  \begin{split}
  (D_t - \Delta) \Rh &= T\ast A + R\ast B + R\ast\Rh + \Ph\ast\Rb^*\ast\Rh + \Ph \ast R\ast \Rh,
\end{split}\\
\label{eq:schemthevol}
\begin{split}
  (D_t - \Delta) \Th &= \nabla T \ast A + R\ast B + R\ast \Rh + \Ph\ast\Rb^*\ast\Rh + \Ph\ast R\ast \Rh\\
     &\phantom{=}+\Ph\ast T\ast \Rh + \Ph\ast\Rb^*\ast\Th^*.
\end{split}
\end{align}

For our purposes, the key feature of these equations is that each term contains at least one factor
  of (some contraction of) $A$, $B$, $\Rh$, $\Th$, $\nabla \Th$, or their adjoints.  
Under our hypotheses, the other factors (including
the extra linear factors of the components of our system) will be bounded,
and this is enough for the application of the backwards-uniqueness result from \cite{Kotschwar}, 
Theorem \ref{thm:bu}, below.
\end{remark}

\subsection{Proof of Proposition \ref{prop:pdeode}}
We are now in a position to prove Proposition \ref{prop:pdeode}.  By the estimates of Shi \cite{Shi}, 
if $g(t)$ is a complete solution to \eqref{eq:rf} with 
\[
|\Rm(x, t)|_{g(t)} \leq K_0
\] 
on $M\times [0, T]$, then, for all $m\geq 1$, and all $\delta > 0$, there exist constants $K_m = K_m(n, K_0, T, \delta)$ such that
\begin{equation}\label{eq:rmderbound}
  \left|\nabla^{(m)}\Rm(x, t)\right|_{g(t)} \leq K_m
\end{equation}
on $M\times [\delta, T]$. The tensors $\Pb$ and $\Ph$, being projection tensors, are also clearly bounded. In fact,
 if $\dim{\mathcal{H}} = k$, then $|\Pb|^2_{g(t)} \equiv k$ and $|\Ph|^2_{g(t)} \equiv n(n-1)/2 - k$ 
on $M\times[0, T]$.  Hence $\Rb$, $\Rh$, $\Tb$, and $\Th$
(and their adjoints) are likewise uniformly bounded on $M\times [\delta, T]$.  Thus we have only to verify that $A$ and $B$
are also bounded.  This is more or less evident from the evolution equations 
\eqref{eq:aevol}, \eqref{eq:bevol} at this point. We only need to observe first that, 
since $D_t = \pdt + R_{ab}\Lambda^a_b$, one has
\[
  D_t U = \pdt U + \Rc\ast U,
\]
for any tensor $U$ and then, that, from \eqref{eq:rmderbound} and the above discussion, we have, on $M\times [\delta, T]$,
\[
 \left|\pdt A\right| \leq C(|A| + 1), \quad
  \left|\pdt B\right| \leq C(|A| + |B|+ 1),
\]
for an appropriate $C$.  (Note that $\nabla \Th = A \ast T + \Ph \ast \nabla T$.) 
At $t = T$, we have $|A| = |B| = 0$, 
so we obtain that $|A|$ and (consequently) $|B|$ are bounded on $M\times [\delta, T]$ as well.

Taken with equations 
\eqref{eq:schemabevol} - \eqref{eq:schemrhevol}, we have established that there exists a constant
$C = C(n, K_0, T, \delta)$ such that
\begin{align}
  \left|\pdt A \right|_{g(t)} &\leq C\left(|A|_{g(t)} + |\Th|_{g(t)}\right),\\
  \left|\pdt B \right|_{g(t)} &\leq C\left(|A|_{g(t)} + |B|_{g(t)} + |\nabla\Th|_{g(t)}\right),\\
  \left|\left(\pdt - \Delta\right)\Rh\right|_{g(t)} &\leq
    C\left(|A|_{g(t)} + |B|_{g(t)} + |\Rh|_{g(t)}\right),\\
  \left|\left(\pdt - \Delta\right)\Th\right|_{g(t)} &\leq C\left(|A|_{g(t)} + |B|_{g(t)} 
		+ |\Rh|_{g(t)} + |\Th|_{g(t)}\right),
\end{align}
on $M\times [\delta, T]$. Proposition \ref{prop:pdeode} then follows at once from the Cauchy-Schwarz inequality.

\section{Backwards-uniqueness of the PDE-ODE system}\label{sec:bu}

The following is a special case of Theorem 3.1 in \cite{Kotschwar}.

\begin{theorem}\label{thm:bu}
Let $\mathcal{X}$ and $\mathcal{Y}$ be finite direct sums of the bundles $T^k_l(M)$,
and 
$X \in C^{\infty}(\mathcal{X}\times[A, \Omega])$, $Y\in C^{\infty}(\mathcal{Y}\times[A, \Omega])$.
Suppose $g(t)$ is a smooth, complete solution to \eqref{eq:rf} of uniformly bounded curvature.
Further assume that the sections $X$, $Y$, and $\nabla X$ are uniformly bounded with respect to $g(t)$ and satisfy
\begin{align}
\label{eq:pdeineq}
\left|\left(\pdt - \Delta_{g(t)}\right)X\right|_{g(t)}^2 &\leq C \left(|X|_{g(t)}^2 + |\nabla X|_{g(t)}^2 + |Y|_{g(t)}^2\right),\\
\label{eq:odeineq}
\left|\pd{Y}{t}\right|_{g(t)}^2 &\leq C\left(|X|_{g(t)}^2 + |\nabla X|^2_{g(t)} + |Y|_{g(t)}^2\right)
\end{align}
for some $C \geq 0$.
Then $X(\cdot, \Omega)\equiv 0$, $Y(\cdot, \Omega) \equiv 0$ implies $X \equiv 0$, $Y\equiv 0$ on
$M\times [A, \Omega]$. 
\end{theorem}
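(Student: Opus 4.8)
The plan is to prove Theorem~\ref{thm:bu} directly by a weighted energy (Carleman) argument, in the spirit of \cite{Kotschwar} and \cite{Alexakis}; this is the step of the whole proof that genuinely uses the completeness of $g(t)$ and the uniform bound on its curvature. First I would reverse time, setting $\tau \defn \Omega - t \in [0, \Omega - A]$, so that the pair $(X,Y)$ solves the \emph{backward}-parabolic and ordinary system
\[
\bigl|(\partial_\tau + \Delta)X\bigr|^2 \leq C\bigl(|X|^2 + |\nabla X|^2 + |Y|^2\bigr), \qquad \bigl|\partial_\tau Y\bigr|^2 \leq C\bigl(|X|^2 + |\nabla X|^2 + |Y|^2\bigr)
\]
on $M \times [0, \Omega - A]$, with $X(\cdot, 0) \equiv 0$ and $Y(\cdot, 0) \equiv 0$ (here $\Delta = \Delta_{g(t)}$ is the time-dependent Laplacian); the goal is to conclude $X \equiv 0$ and $Y \equiv 0$ throughout.

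The second step is to eliminate the ordinary-differential unknown. Since $|\partial_\tau Y| \leq \sqrt{C}\,(|X| + |\nabla X| + |Y|)$, for each fixed $x$ the function $u(\tau) \defn |Y(x,\tau)|$ satisfies $u' \leq \sqrt{C}\,(|X| + |\nabla X|)(x,\tau) + \sqrt{C}\,u$ with $u(0) = 0$, so Gronwall's inequality gives $|Y(x,\tau)| \leq C'\!\int_0^\tau (|X| + |\nabla X|)(x,s)\,ds$, hence $|Y(x,\tau)|^2 \leq C''\,\tau\!\int_0^\tau (|X|^2 + |\nabla X|^2)(x,s)\,ds$. Thus $Y$ is slaved to $X$ and $\nabla X$ through a ``memory'' term, and up to this integral term $X$ behaves like a solution of the scalar inequality $|(\partial_\tau + \Delta)X|^2 \leq C(|X|^2 + |\nabla X|^2)$, the prototype for backward-parabolic unique continuation.

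The heart of the argument is then a Carleman estimate for $\partial_\tau + \Delta$ acting on $X$. Fixing a basepoint $\mathcal{O} \in M$ and letting $r$ be the $g(\Omega)$-distance to $\mathcal{O}$ (comparable, by the uniform curvature bound, to the distance measured in any other time-slice), one builds weights $\varphi = \varphi_\sigma(r, \tau)$ with the appropriate space-time convexity, large near $\{\tau = 0\}$ and decaying in $r$, and proves that, against a spatial cutoff, for all sufficiently large $\alpha$
\[
\alpha \iint e^{2\alpha\varphi}\,\varphi\,|X|^2\,dV\,d\tau + \iint e^{2\alpha\varphi}\,|\nabla X|^2\,dV\,d\tau \;\leq\; C \iint e^{2\alpha\varphi}\,\bigl|(\partial_\tau + \Delta)X\bigr|^2\,dV\,d\tau
\]
up to error terms, supported on an annulus, coming from the cutoff. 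Feeding in the differential inequality for $X$ together with the Gronwall bound for $Y$ and sending $\alpha \to \infty$, the right-hand side is absorbed into the left, forcing $X \equiv 0$ where $\varphi$ is large; this yields $X \equiv 0$ on $M \times [0, \tau_0]$ for some $\tau_0 > 0$. Since the hypotheses are uniform under translation in $\tau$ and $Y(\cdot, \tau_0) \equiv 0$ by the Gronwall bound, one restarts at $\tau = \tau_0$ and iterates on consecutive subintervals of the uniform length $\tau_0$ (a connectedness argument in $\tau$), obtaining $X \equiv 0$ on all of $M \times [0, \Omega - A]$, whence $Y \equiv 0$ as well. In \cite{Kotschwar} this program is carried out with, in effect, two complementary Carleman inequalities, one tailored to the slice $\{\tau = 0\}$ and one to the propagation.

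The step I expect to be the main obstacle is controlling the spatial noncompactness of $M$. The Carleman estimate must be applied to $X$ multiplied by a cutoff $\chi_R$ supported on a ball of radius $2R$, and the commutator $[\Delta, \chi_R]X$ produces error terms concentrated on the annulus $\{R \leq r \leq 2R\}$; one must show these are negligible against the weighted left-hand side as $R \to \infty$, which forces the weight to decay in $r$ quickly enough while still dominating on the region of interest. That this balance can be struck relies essentially on the uniform curvature bound (which controls both the volume growth and the time-variation of the distance function) and on the assumed boundedness of $X$, $Y$, and $\nabla X$. A further subtlety, absent for a purely parabolic system, is that the memory term bounding $Y$ couples the time-slices nonlocally, so the weight $\varphi$ must be arranged to be monotone in $\tau$ in a way compatible with the Gronwall estimate, so that this term, too, can be absorbed. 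These are exactly the difficulties resolved in \cite{Kotschwar}, from which Theorem~\ref{thm:bu} is extracted as a special case.
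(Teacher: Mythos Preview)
The paper does not prove Theorem~\ref{thm:bu} at all; it simply records it as a special case of Theorem~3.1 in \cite{Kotschwar} and invokes it as a black box. Your proposal therefore goes well beyond what the paper does, sketching the weighted-energy argument that underlies the cited result.

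Your outline is broadly in the right spirit---Carleman-type weighted $L^2$ estimates, a spatial cutoff whose annular errors are killed by the curvature-controlled weight decay, iteration on time subintervals---but it departs from the argument in \cite{Kotschwar} in one structural respect. Rather than eliminating $Y$ via Gronwall and then carrying a nonlocal-in-time ``memory'' term through the Carleman inequality, the approach there keeps $Y$ as a full participant in the energy: one works with quantities of the shape $\mc{E}(\tau) \sim \int e^{-\alpha\sigma}|X|^2$ and $\mc{F}(\tau) \sim \int e^{-\alpha\sigma}(|\nabla X|^2 + |Y|^2)$ with an essentially time-dependent weight, and the ODE inequality \eqref{eq:odeineq} is fed directly into the differentiation of $\mc{F}$ rather than integrated out beforehand. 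This sidesteps exactly the nonlocal coupling you flag in your last paragraph, and removes the need to engineer the weight's $\tau$-monotonicity to be compatible with a Gronwall bound. Your route may be workable, but the extra difficulty you identify is one the original argument simply avoids.
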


Combining this result with Proposition \ref{prop:pdeode}, we have essentially proven Theorem \ref{thm:rangermredux};
it only remains to see that the conclusion is valid all the way down to $t= 0$.

\begin{proof}[Proof of Theorem \ref{thm:rangermredux}]
With $\mathcal{X}$, $\mathcal{Y}$, and $X(t)$, $Y(t)$ defined as in the previous section, we may apply
Proposition \ref{prop:pdeode} and Theorem \ref{thm:bu} on $M\times [\delta, T]$ for any $0 < \delta < T$,
to obtain the conclusion of Theorem \ref{thm:rangermredux} (and hence Theorem \ref{thm:rangerm}) for all  
$t \in (0, T)$.  But $\Pb(t)$ and $\Ph(t)$ are smoothly defined (and are complementary $g(t)$-orthogonal 
projections) on $\WM$ for all $t\in [0, T]$.  Thus
the vanishing of $\nabla \Pb$ and $\nabla \Ph$ on $M\times (0, T)$ imply by continuity that
$\nabla \Pb(0) = \nabla \Ph(0) =0$ also.
Moreover, $\ker (\Pb(t)) \equiv \mathcal{K}$ and $\ker (\Ph(t)) \equiv \mathcal{H}$ for $t\in (0, T)$,
thus continuity again implies that $\mathcal{K}\subset\ker\Pb(0)$ and $\Hc\subset \ker\Ph(0)$.
Since $\Pb(0)$ and $\Ph(0)$ are complementary orthogonal projections, with
\[
\operatorname{rank}{\Pb(0)} = \operatorname{rank}{\Pb(t)} \equiv \operatorname{dim}{\Hc}, 
\quad\mbox{and}\quad
\operatorname{rank}{\Ph(0)} = \operatorname{rank}{\Ph(t)}\equiv \operatorname{dim}{\Kc}, 
\]
we must actually have $\ker{(\Pb(0))} = \Kc$ and  $\ker{(\Ph(0))} = \Hc$.
We also therefore have $\im{\Pb(0)} = \Hc$, and $\im{\Ph(0)} = \Kc$, and it follows that $\Hc$ and $\Kc$ are orthogonal
with respect to $g(0)$. Since $\Pb(0)$ and $\Ph(0)$ are parallel,  $\Hc$ and
$\Kc$ are invariant under $\nabla_{g(0)}$-parallel translation by Lemma \ref{lem:vb}.  Finally, since $(\Rm\circ \Ph)(t) \equiv 0$
for $0 < t \leq T$, it follows that $\left.\Rm(0)\right|_{\Kc}: \Kc\to \WM$ is also the zero map.  
The symmetry of $\Rm$ then implies that $\im{\Rm(0)}\subset \Hc$ and, by Lemma \ref{lem:ambrosesinger},
we conclude that $\hol_p(g(0)) \subset \Hc$, completing the proof.
\end{proof}

\begin{remark} By a result of S. Bando \cite{Bando} (see also Remark 13.32 of \cite{RFV2P2}), 
if $g(t)$ is a complete solution of \eqref{eq:rf} of bounded curvature, then $(M, g(t))$ is a real-analytic manifold
for $0< t \leq T$. Hence at any $t > 0$, any representative $\hol_p(g(t))$ of the isomorphism
class of $\hol(g(t))$
is generated by the set
\[
  \bigcup_{l=0}^{\infty}
    \left\{\;\nabla_{X_1}\nabla_{X_2}\cdots\nabla_{X_l}\Rm(p, t)(\omega)\;\right|\;
      \left. X_1, X_2, \ldots, X_l \in T_pM,\quad \omega\in \WMp\;\right\}. 
\]
(See \cite{KobayashiNomizu}, Sections II.10, III.9.)
Thus we can localize Theorem \ref{thm:rangerm} somewhat: 
If, at some $p\in M$, the endomorphisms coming from the covariant derivatives of $\Rm(g(T))$ of all orders
are contained in some subalgebra $H_p\subset \WMp$, then, at every $q$, $\hol_q(g(T))$
is contained in a subalgebra isomorphic to $H_p$. We can then apply Theorem \ref{thm:rangerm} to conclude
that, for all $(q, t)\in M\times [0, T]$, $\hol_q(g(t))$ is contained a subalgebra isomorphic to $H_p$.
In particular, if $g(T)$ admits a splitting on some neighborhood $U\subset M$ at some time $T >0$, $g(t)$ must
split on a neighborhood of every $p\in M$ at all times $0 \leq t \leq T$.
\end{remark}

\appendix

\section{An alternative proof of the non-expansion of $\Hol^0(g(t))$}\label{app:nonexprooftwo}

In this section we present a second and essentially self-contained proof of Theorem \ref{thm:holonomynonex},
using the general framework of Theorem \ref{thm:holonomy} (but different methods). Although 
we do not use the maximum principle for systems in \cite{Hamilton4D}, the argument is close to 
that suggested by Hamilton for Theorem 4.1 of \cite{HamiltonSingularities}. We include it here
only for reference and comparison purposes.  

Theorem \ref{thm:holonomynonex} has the following infinitesimal reformulation, corresponding
to Theorem \ref{thm:rangerm}. 
\begin{claim*} Suppose $\mathcal{H}\subset\WM$ is a smooth subbundle that is invariant under 
$\nabla_{g(0)}$-parallel transportation 
and the bracket $[\cdot, \cdot]_{g(0)}$. If $\Rm(g(0))\subset \mathcal{H}$, then it follows
that
$\Rm(g(t))\subset \mathcal{H}$ for all $t$ and that $\mathcal{H}$ remains invariant by $\nabla_{g(t)}$-parallel transport
and the bracket $[\cdot, \cdot]_{g(t)}$.  In particular, $\hol_p(g(t))\subset \mathcal{H}_p$.
\end{claim*}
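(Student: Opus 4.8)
The plan is to reuse essentially all of the machinery built in Sections \ref{sec:ncasuc}--\ref{sec:bu} for Theorem \ref{thm:rangerm}, but with the roles of $t=0$ and $t=T$ interchanged and with the backwards-uniqueness input Theorem \ref{thm:bu} replaced by an elementary \emph{forwards} Gronwall estimate. First I would set $h \defn g(0)$, let $\Pb_0$, $\Ph_0$ be the $h$-orthogonal projections of $\WM$ onto $\Hc$ and $\Kc\defn\Hc^{\perp}$, and extend them to families $\Pb(t)$, $\Ph(t)$ on $[0,T]$ by the fiberwise ODE \eqref{eq:projode}, now with the prescribed value at $t=0$. The proofs of Lemmas \ref{lem:proj} and \ref{lem:vb} apply verbatim: $H(t)\defn\im{\Pb(t)}$ is $g(t)$-orthogonal to $K(t)\defn\im{\Ph(t)}$, has constant rank $\dim\Hc$, stays closed under $[\cdot,\cdot]_{g(t)}$, and the identities \eqref{eq:tvan} persist. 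Since $\Hc$ is $\nabla_h$-parallel, Lemma \ref{lem:vb} gives $\nabla_h\Pb_0=\nabla_h\Ph_0=0$; and since $\im{\Rm(h)}\subseteq\Hc$ and $\Rm(h)$ is self-adjoint, $\Kc\subseteq\ker\Rm(h)$, so $\Rm(h)\circ\Ph_0=0$.

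The goal is then to show $\Rh\defn\Rm\circ\Ph(t)\equiv 0$ and $A\defn\nabla\Ph(t)\equiv 0$ on $M\times[0,T]$. Once that is known, Lemma \ref{lem:imagerm} gives $H(t)\equiv\Hc$ and $K(t)\equiv\Kc$, Lemma \ref{lem:vb} gives that $\Hc$ is $\nabla_{g(t)}$-parallel, Lemma \ref{lem:proj} gives bracket-closedness, and Lemma \ref{lem:ambrosesinger} yields $\hol_p(g(t))\subseteq\Hc_p$, exactly as in the proof of Theorem \ref{thm:rangermredux}. To obtain the vanishing I would introduce the same auxiliary quantities $\Th\defn\nabla\Rm\circ(\Id\times\Ph)$ and $B\defn\nabla\nabla\Ph$ and form $\ve{X}=\Rh\oplus\Th$, $\ve{Y}=A\oplus B$ as in Section \ref{sec:pdeode}; Proposition \ref{prop:pdeode} holds unchanged, so on $M\times[\delta,T]$ the pair $(\ve{X},\ve{Y})$ satisfies \eqref{eq:pdeodepar}--\eqref{eq:pdeodeode} with $\ve{X}$, $\ve{Y}$, $\nabla\ve{X}$ bounded. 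Moreover $\nabla_h\Ph_0=0$ and $\Rm(h)\circ\Ph_0=0$ force $A(\cdot,0)=B(\cdot,0)=\Rh(\cdot,0)=0$ and, by differentiating $\Rm(h)\circ\Ph_0=0$, also $\Th(\cdot,0)=0$; thus $\ve{X}(\cdot,0)=\ve{Y}(\cdot,0)=0$.

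The heart of the matter is that, in the forwards direction, vanishing at $t=0$ together with \eqref{eq:pdeodepar}--\eqref{eq:pdeodeode} already forces $\ve{X}\equiv\ve{Y}\equiv 0$, and this needs no Carleman estimate, only a weighted energy inequality. Computing $(\pdt-\Delta)|\ve{X}|^2 \le C(|\ve{X}|^2+|\ve{Y}|^2)-2|\nabla\ve{X}|^2$ (using \eqref{eq:pdeodepar} and Cauchy--Schwarz, the extra $\pd{g}{t}=-2\Rc$ terms being bounded) and $\pdt|\ve{Y}|^2\le |\ve{Y}|^2+C(|\ve{X}|^2+|\nabla\ve{X}|^2+|\ve{Y}|^2)$ (from \eqref{eq:pdeodeode}), one sees that for $\lambda$ large the quantity $E(t)=\int_M(\lambda|\ve{X}|^2+|\ve{Y}|^2)\,d\mu_{g(t)}$ satisfies $E'(t)\le C_\lambda E(t)$: the favorable term $-2\lambda\int|\nabla\ve{X}|^2$ from the first inequality dominates the $C\int|\nabla\ve{X}|^2$ produced by the second. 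Since $E(0)=0$, Gronwall gives $E\equiv 0$. On a noncompact $M$ one replaces the integrand by $\varphi_R^2 e^{-\psi}(\lambda|\ve{X}|^2+|\ve{Y}|^2)$ with $\psi$ a Gaussian-type weight and $\varphi_R$ a cutoff, as in the standard treatments of the maximum principle on complete manifolds of bounded geometry, the boundedness of $\ve{X}$, $\ve{Y}$, $\nabla\ve{X}$ being what makes the cutoff and weight errors controllable.

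I expect the one genuine technical obstacle to be not the energy argument itself but the behavior as $t\to 0^+$: the constant $C$ in Proposition \ref{prop:pdeode} comes from Shi's estimates and degrades as $\delta\to 0$, so one cannot simply run Gronwall on $M\times[\delta,T]$ with data at $t=\delta$ (which is not yet known to vanish). When $M$ is compact this disappears, since $\nabla\Rm$ and $\nabla\nabla\Rm$ are then bounded on all of $M\times[0,T]$ by smoothness, Proposition \ref{prop:pdeode} holds with a uniform constant on $[0,T]$, and the Gronwall estimate applies directly from $t=0$. In the complete noncompact case one must instead keep $t=0$ inside the interval throughout: carry out the localized weighted energy estimate on $M\times[0,T]$ using that $\nabla^{(k)}\Rm$ is bounded on $L\times[0,T]$ for each compact $L$ (with the bound allowed to depend on $L$), so that after multiplying by $\varphi_R$ the coefficients seen by the estimate are bounded on the support. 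This is the only place where the forwards argument demands care comparable to — though still lighter than — the backwards one.
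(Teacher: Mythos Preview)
Your approach is correct but differs from the paper's in one key structural choice, and that difference is worth recording. You extend $\Ph$ by the fiberwise ODE $D_t\Ph=0$ (as in Section~\ref{sec:ncasuc}) and then reuse the full four-component PDE--ODE system $(\Rh,\Th,A,B)$, replacing Theorem~\ref{thm:bu} by a forwards energy/Gronwall estimate. The paper instead extends $\Ph$ by the linear \emph{parabolic} equation $(D_t-\Delta)\Ph=0$. This single change decouples the system: the commutator identity \eqref{eq:heatnablacomm} gives $(D_t-\Delta)A = R\ast A$ directly, so the scalar maximum principle on $|A|^2$ (with constant depending only on $n$ and $K_0$, \emph{not} on derivative bounds for $\Rm$) yields $A\equiv 0$. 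Once $\nabla\Ph=0$, the heat equation for $\Ph$ collapses to the ODE $D_t\Ph=0$, so $\Pb,\Ph$ are a posteriori the ODE extensions and all of Lemma~\ref{lem:proj} applies; then $(D_t-\Delta)\Rh=\Qc(\Rm)\circ\Ph$, and \eqref{eq:qcomp} shows every term contains a factor of $\Rh$, giving $\Rh\equiv 0$ by a second scalar maximum principle. No $\Th$, no $B$, no Shi estimates, and therefore no $t\to 0^+$ degradation at all---the very obstacle you correctly flagged as the main difficulty of your route simply does not arise. Your approach buys conceptual unity with the backwards argument and requires no new computation; the paper's buys a shorter proof that needs only the curvature bound, handles the noncompact case with a routine Bernstein-type localization, and makes transparent why forwards non-expansion is genuinely easier than backwards non-contraction.
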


Similar to the proof of Theorem \ref{thm:rangermredux}, we
extend the projection operators $\Pb_0$ and $\Ph_0$ onto $\Hc$ and $\Kc = \Hc^{\perp}$ at time
$t = 0$ to operators $\Pb(t)$ and $\Ph(t)$ for $t > 0$.  The key difference is that we accomplish
this by the solution of a linear parabolic equation rather than by an ODE.
Although with this choice we lose (temporarily) the assurance that the maps remain orthogonal projections, 
it allows us to effectively decouple our system and reduce the number of components
from four to two.

\begin{proof}[Proof of the claim]
Denote by $\mathcal{K} \subset \WM$ the orthogonal complement of $\mathcal{H}$ and by $\Pb_0$, $\Ph_0$ 
the projections onto $\Hc$ and $\Kc$ taken with respect to the metric induced by $g(0)$.

By assumption, $g(t)$ is complete and $\Rm(g(t))$ uniformly bounded, and so we can define $\Pb(t)$ and $\Ph(t)$ on
$M\times [0, T]$ to be the unique bounded solutions to the equations
\begin{align}
  \left(\pdt - \Delta\right)\Pb_{abcd} &= -R_{ap}\Pb_{pbcd} -R_{bp}\Pb_{apcd} - R_{cp}\Pb_{abpd} - R_{dp}\Pb_{abcp}\\
\label{eq:phpar}  \left(\pdt - \Delta\right)\Ph_{abcd} &= -R_{ap}\Ph_{pbcd} -R_{bp}\Ph_{apcd} - R_{cp}\Ph_{abpd} - R_{dp}\Ph_{abcp}
\end{align}
with $\Pb(0) = \Pb_0$ and $\Ph(0) = \Ph_0$. As functions on $\OMt$, the above equations are
\[
  (D_t - \Delta)\Pb_{abcd} = 0, \qquad (D_t - \Delta) \Ph_{abcd} = 0.
\]

Since $\Hc$ is parallel initially,  $A_{mijkl} \defn \nabla_m\Ph_{ijkl} \equiv 0$ initially by
Lemma \ref{lem:vb}.  We claim $A_{mijkl}\equiv 0$ for all $ 0 \leq t \leq T$.
Its evolution is
\begin{align*}
  (D_t - \Delta)A_{mijkl} &= \left[(D_t - \Delta), \nabla_m\right] \Ph_{ijkl}\\
			  &= 2(R_{mbdc}\Lambda_d^c\nabla_b +R_{mc}\nabla_{c})\Ph_{ijkl}, 
\end{align*}
and so
\[
  \left(\pdt - \Delta\right)A = R \ast A,
\]
that is,
\[
    \left|\left(\pdt - \Delta\right)A\right| \leq C|A|
\]
for $C = C(n, K)$.
Defining $Q = |A|^2$, we thus have
\begin{align*}
  \left(\pdt - \Delta\right)Q &= -2|\nabla\nabla\Ph|^2 + 2 \left\langle\left(\pdt - \Delta\right)A, A\right\rangle\\
      &\leq 2C Q,
\end{align*}
so
\[
      Q(x, t) \leq e^{2CT}\sup_{x\in M} Q(x, 0) = 0
\]
on $M\times [0, T]$ by the maximum principle.

Strictly speaking, when $M$ is non-compact, our use of the maximum principle requires some justification.
Since $M$ has bounded curvature (and, in particular, a lower bound on $\Rc(g(t))$), we need only to verify
that $Q$ does not grow too quickly at infinity.  We omit the full details of this verification, 
but point out that, for example, one could use a Bernstein-type trick, as in \cite{Shi}, 
and consider the quantity $F\defn (L + |\Ph|^2)Q$ where $L > 0$ is constant.
Then $F$ satisfies $F(p, 0) \equiv 0$, and, if $L = L(n, \sup|\Ph|^2)$ is sufficiently large, the equation
 \[
    \left(\pdt - \Delta\right) F \leq C_1 F  - C_2 F^2,
\]
for positive constants $C_i = C_i(K_0, L, n)$. Using a standard cutoff function and the maximum principle, one can prove
\[
    \sup_{B_{g(t)}(p, \rho)\times [0, T]}F(x, t)\leq C_3(n, K_0, L, T)\left(\frac{\rho + 1}{\rho}\right)
\]
for all $\rho > > 0$. Hence, upon sending $\rho\to\infty$, one obtains that $Q = |\nabla \Ph|^2 \leq C$ on $M\times [0, T]$.

We conclude, in any case, that $\Ph$ remains parallel, and must actually satisfy
the ODE $D_t \Ph = 0$.  Likewise, we have $\nabla\Pb = 0$ and $D_t \Pb = 0$. But, by Lemmas \ref{lem:vb} and \ref{lem:proj},
this implies that $\Pb$ and $\Ph$ remain complementary
projections, and hence that $H(t) \defn \im{\Pb(t)}$ and $K(t) \defn \im{\Ph(t)}$ remain complementary orthogonal $\nabla_{g(t)}$-parallel subbundles, 
with $H(t)$ invariant under the bracket $[\cdot, \cdot]_{g(t)}$.  In particular
$\mathcal{T}[\Pb, \Ph, \Pb] \equiv 0$  by Lemma \ref{lem:la2}.

Now we define $\Rh = \Rm\circ \Ph$ as before.  We have $\Rh(0) \equiv 0$ by assumption, and claim $\Rh(t) \equiv 0$ for
all $0\leq t \leq T$.
Since $\nabla \Ph \equiv 0$,
\[
  (D_t - \Delta) \Rh = \Qc(\Rm)\circ\Ph.
\]
Using $\mathcal{T}[\Pb, \Ph, \Pb]\equiv 0$, we have, by \eqref{eq:qcomp} and Shi's estimates,
\[
  \left|\Qc(\Rm)\circ \Ph \right|^2 \leq C|\Rh|^2.
\]
  So the (uniformly bounded) quantity $W = |\Rh|^2$ satisfies
\[
  \left(\pdt - \Delta\right) W \leq C W
\]
with $W(0)\equiv 0$; thus $W(t) \equiv 0$ by the maximum principle.  Hence $\im{\Rm(t)} \subset H(t)$.
Applying Proposition \ref{lem:imagerm} shows that $H(t) \equiv \mathcal{H}$ and $K(t) \equiv \mathcal{K}$,
and the theorem is proved.      
\end{proof}

\begin{acknowledgement*}
    The author wishes to thank Professors Bennett Chow, Gerhard Huisken, and Lei Ni for
    their support and encouragement.
\end{acknowledgement*}

\end{document}